\theoremstyle{theorem}
\newtheorem{theorem}{Theorem}
\newtheorem{conjecture}{Conjecture}
\newtheorem{corollary}[theorem]{Corollary}
\newtheorem{lemma}[theorem]{Lemma}
\newtheorem{definition}[theorem]{Definition}
\newtheorem*{lemma*}{Lemma}
\providecommand{\setN}{\mathbb{N}}
\providecommand{\setR}{\mathbb{R}}
\newcommand{\E}{\mathop{\mathbb{E}}}
\newcommand{\Vol}{\textrm{Vol}}
\newcommand{\disc}{\textrm{disc}}
\newcommand{\vb}{\textrm{vb}}
\newcommand{\tr}{\textrm{Tr}}
\newcommand{\spn}{\textrm{span}}
\DeclareMathAlphabet{\pazocal}{OMS}{zplm}{m}{n}
\title{The Vector Balancing Constant for Zonotopes}
\author{Laurel Heck\thanks{University of Washington, Seattle. Email: lheck98@uw.edu. Supported by an NSF Graduate
Research Fellowship.} \and Victor Reis\thanks{University of Washington, Seattle. Email: voreis@uw.edu} \and Thomas Rothvoss\thanks{University of Washington, Seattle. Email: rothvoss@uw.edu. Supported by NSF CAREER grant 1651861 and a David \& Lucile Packard Foundation Fellowship.}}
\date{}
\begin{document}

\maketitle


\begin{abstract}
  The vector balancing constant $\vb(K,Q)$ of two symmetric convex bodies $K,Q$ is the minimum  $r \geq 0$
  so that any number of vectors from $K$ can be balanced into an $r$-scaling of $Q$.
  A question raised by Schechtman is whether for any zonotope $K \subseteq \setR^d$ one has $\textrm{vb}(K,K) \lesssim \sqrt{d}$. Intuitively, this asks whether a natural geometric generalization of Spencer's Theorem (for which $K = B^d_\infty$) holds.
  We prove that for any zonotope $K \subseteq \setR^d$ one has $\vb(K,K) \lesssim \sqrt{d} \log \log \log d$.
  Our main technical contribution is a tight lower bound on the Gaussian measure of any section of a normalized zonotope, generalizing Vaaler's Theorem for cubes.
  We also prove that for two different normalized zonotopes $K$ and $Q$ one has $\vb(K,Q) \lesssim \sqrt{d \log d}$.
  All the bounds are constructive and
  the corresponding colorings can be computed in polynomial time.
\end{abstract}

\section{Introduction}

\emph{Discrepancy theory} is a subfield of combinatorics where one is given a set system $(X,\pazocal{F})$ with a ground set $X$ and a family of sets $\pazocal{F} \subseteq 2^X$, and the goal is to find the coloring that minimizes
the maximum imbalance, i.e.
\[
  \disc(\pazocal{F}) = \min_{x \in \{ -1,1\}^X} \max_{S \in \pazocal{F}} \Big|\sum_{j \in S} x_j\Big|.
\]
A slightly more general linear-algebraic view is that one is given a matrix $A \in [-1,1]^{d \times n}$ and its discrepancy
is defined as $\min_{x \in \{ -1,1\}^n} \|Ax\|_{\infty}$. The best known result in this area is certainly Spencer's Theorem~\cite{SixStandardDeviationsSuffice-Spencer1985} which states that for any $n \leq d$ one has $\textrm{disc}(A) \leq O(\sqrt{n \log (\frac{2d}{n})})$. The challenging aspect of that Theorem is that --- say for $n=d$ --- a uniform random coloring $x \sim \{ -1,1\}^n$ will only give a $\Theta(\sqrt{n \log n})$ bound. Instead, Spencer~\cite{SixStandardDeviationsSuffice-Spencer1985} applied the \emph{partial coloring method} which had been first used by
Beck~\cite{Beck-RothsEstimateIsSharp1981}.

The original proofs of the partial coloring method are based on the \emph{pigeonhole principle} and are
non-constructive. The first polynomial time algorithm to actually find the coloring guaranteed by
Spencer~\cite{SixStandardDeviationsSuffice-Spencer1985} is due to Bansal~\cite{ConstructiveDiscMin-Bansal-FOCS2010}, followed by a sequence of algorithms~\cite{ConstructiveDiscMin-LovettMeka-FOCS2012,ConstructiveDiscrepancy-Rothvoss-FOCS2014,DBLP:journals/corr/LevyRR16,DBLP:journals/rsa/EldanS18} that either work in more general settings or are simpler.

Discrepancy theory is an extensively studied topic with many applications in mathematics and computer science.
To give two concrete examples, Nikolov, Talwar and Zhang~\cite{DBLP:conf/stoc/NikolovTZ13} showed a connection between differential privacy and
hereditary discrepancy, and the best known approximation algorithm for Bin Packing uses a discrepancy-based rounding~\cite{DBLP:conf/soda/HobergR17}.
Other applications can be found in data structure lower bounds, communication complexity and pseudorandomness; we refer to the book of Chazelle~\cite{DiscrepancyMethod-Chazelle-2000} for a more detailed account. The seminal result of Batson, Spielman and Srivastava~\cite{TwiceRamanujanSparsifiers-BatsonSpielmanSrivastava-STOC09} on the existence of linear-size spectral sparsifiers for graphs can also be interpreted as a discrepancy-theoretic result, see \cite{DBLP:conf/soda/ReisR20} for details.

For the purpose of this paper, it will be convenient to introduce more general notation.
For two symmetric convex bodies $K,Q \subseteq \setR^d$ we define the \emph{vector balancing constant} $\textrm{vb}(K,Q)$ as the smallest number $r \ge 0$ so that for any vectors $u_1,\ldots,u_n \in K$
one can find signs $x \in \{-1,1\}^n$ so that the signed sum $x_1 u_1 + \cdots + x_n u_n$ is in $rQ$.
We also denote $\textrm{vb}_n(K,Q)$ as the same quantity where we fix the number of vectors to be $n$. 
For example, Spencer's Theorem~\cite{SixStandardDeviationsSuffice-Spencer1985} can then be rephrased as $\vb(B_{\infty}^d,B_{\infty}^d) = \Theta(\sqrt{d})$ and as $\vb_n(B_{\infty}^d,B_{\infty}^d) = \Theta(\sqrt{n \log (\frac{2d}{n})})$ for $n \leq d$.
Here we denote $B_p^{d}$ as the $d$-dimensional unit ball of the norm $\| \cdot \|_p$.
Moreover for a Euclidean ball one can easily prove that  $\textrm{vb}(B_2^d,B_2^d) = \Theta(\sqrt{d})$ and for the $\ell_1$-ball we have
$\vb(B_1^d,B_1^d) = \Theta(d)$.

While Spencer's Theorem itself is tight, at least three candidate generalizations have been suggested in
the literature --- all three are unsolved so far.

\paragraph{The Beck-Fiala Conjecture.} Suppose we have a set system $(X,\pazocal{F})$ in which every element is
in at most $t$ sets. Beck and Fiala~\cite{IntegerMakingTheorems-BeckFiala81} proved  using a
linear-algebraic argument that in this case the discrepancy is bounded by $2t$ and they state
the conjecture that the correct dependence should be $O(\sqrt{t})$. The same proof of~\cite{IntegerMakingTheorems-BeckFiala81} also shows that
$\vb(B_1^d,B_{\infty}^d) \leq 2$.
However, the Beck-Fiala Conjecture is wide open and the best known bounds are $O(\sqrt{t \log n})$~\cite{Banaszczyk-RSA1998, GramSchmidtWalk-BansalDGL-STOC2018} and $2t - \log^*(t)$~\cite{bukh_2016}.
In fact, Koml\'os Conjecture of $\vb(B_2^d,B_{\infty}^d) \leq O(1)$ is even more general; here the
best known bound is $\vb(B_2^d, B_{\infty}^d) \leq O(\sqrt{\log(d)})$~\cite{Banaszczyk-RSA1998}.

\paragraph{The Matrix Spencer Conjecture.} A conjecture popularized by Zouzias~\cite{Zouzias2012}
and Meka~\cite{MekaBlog2014} claims that for any symmetric matrices $A_1,\ldots,A_n \in \setR^{n \times n}$
with all eigenvalues in $[-1,1]$, there are signs $x \in \{ -1,1\}^n$ so that the maximum singular value
of $\sum_{i=1}^n x_iA_i$ is at most $O(\sqrt{n})$. Using standard matrix concentration bounds, one can prove that a random coloring attains a value of at most $O(\sqrt{n \log n})$.
Moreover, one can prove the conjectured upper bound of $O(\sqrt{n})$ under the additional assumption that the matrices are block-diagonal with constant size blocks~\cite{DBLP:conf/stoc/DadushJR22}, or have rank $O(\sqrt{n})$~\cite{MatrixDiscViaQuantum-STOC2022}. Based on recent progress on matrix concentration, it is possible to obtain the same under the weaker condition that they have rank at most $\frac{n}{\log^3(n)}$~\cite{BansalJiangMeka-MatrixSpencerUpToPolylog-2022-Arxiv}.

\paragraph{The vector balancing constant of zonotopes.}

A \emph{zonotope} is defined as the linear image of a cube. If $A \in \setR^{m \times d}$ is a matrix with $m \ge d$, we can write a $d$-dimensional zonotope in the form $K = \{ \sum_{i=1}^m y_iA_i \mid y \in [-1,1]^m \} = A^{\top} B_{\infty}^m \subseteq \setR^d$. Note that $m$ is the  \emph{number of segments} of the zonotope. The cube $B_{\infty}^d$ is trivially a zonotope, and it is known that for every $p \geq 2$, the ball $B_{p}^n$ is the limit of a sequence of zonotopes, called a \emph{zonoid}~\cite{BLM89}.
Schechtman~\cite{AIMWorkshop2007} raised the question whether it is true that for any zonotope $K \subseteq \setR^d$ one has  $\vb(K,K) \lesssim \sqrt{d}$ where we write $A \lesssim B$ if $A \leq C \cdot B$ for a universal constant $C>0$.
The best known bound of $\vb(K,K) \lesssim \sqrt{d \log \log d}$ is a direct consequence of Spencer's theorem and the fact that zonotopes can be \emph{sparsified} up to a constant factor with only $O(d \log d)$ segments~\cite{EmbeddingL1-Talagrand-PAMS1990}. An affirmative answer to Schechtman's question would follow from an $O(d)$ bound, or equivalently whether an $\ell_1$-analogue of~\cite{TwiceRamanujanSparsifiers-BatsonSpielmanSrivastava-STOC09} is true. We defer to Section~\ref{sec:OpenProblems} for details. 

\subsection{Our contributions}
Our main result is an almost-proof of Schechtman's conjecture (falling short only by a $\log \log \log d$ term).
\begin{theorem} \label{thm:VBofKwithK}
  For any zonotope $K \subseteq \setR^d$ one has $\textrm{vb}(K,K) \lesssim \sqrt{d} \log \log \log d$. Moreover, for any
  $v_1,\ldots,v_n \in K$ one can find in randomized polynomial time a coloring $x \in \{ -1,1\}^n$ with $\|\sum_{i=1}^n x_iv_i\|_K \lesssim \sqrt{d} \log \log \log d$.
\end{theorem}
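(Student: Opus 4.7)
The plan is to use the partial coloring paradigm combined with the generalized Vaaler theorem for zonotopes that is announced as the paper's main technical contribution. Given $v_1,\ldots,v_n \in K$, form the matrix $V = [v_1 \mid \cdots \mid v_n] \in \setR^{d\times n}$ and consider the partial coloring body $C_r = \{x \in \setR^n : Vx \in rK\}$. Rothvoss's partial coloring lemma guarantees that if $\gamma_n(C_r) \geq e^{-n/16}$, then one can efficiently find $x \in [-1,1]^n \cap 2C_r$ with at least $n/2$ coordinates in $\{-1,+1\}$. Since $\gamma_n(C_r) = \Pr_{\xi \sim N(0, VV^{\top})}[\xi \in rK]$, the task reduces to lower-bounding the Gaussian measure of the dilate $rK$ under the push-forward covariance $M = \sum_i v_i v_i^{\top}$.

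First I would sparsify $K$ using Talagrand's result, replacing it by a zonotope $\tilde K$ satisfying $K \subseteq \tilde K \subseteq 2K$ and generated by only $m = O(d \log d)$ segments; this provides a controlled presentation $\tilde K = A^{\top} B_{\infty}^m$ at the cost of a harmless constant factor. The generalized Vaaler theorem for zonotopes should then guarantee $\gamma_n(C_r) \geq e^{-n/16}$ already for $r$ of order $\sqrt{d}$, up to a mild factor. Applying the partial coloring lemma yields a partial coloring $x$ with $\|Vx\|_K \lesssim r$ and at least $n/2$ signs frozen. One then iterates on the unfrozen coordinates; after $O(\log n)$ rounds all signs are fixed, and with careful tracking of the residual zonotope's dimension and segment count between rounds, the partial contributions sum to $O(\sqrt{d}\,\log\log\log d)$. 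The polynomial-time claim follows from the fact that the underlying partial coloring lemma is itself algorithmic.

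The heart of the matter --- and what I expect to be the main obstacle --- is the generalized Vaaler theorem itself. Vaaler's original proof for cubes uses a Fourier-analytic identity specific to indicator functions of intervals and does not extend directly to a general zonotope $K = A^{\top} B_{\infty}^m$, whose gauge mixes the $m$ segments in a correlated manner. A plausible route is to use the dual description $\|v\|_K = \inf\{\|y\|_{\infty} : A^{\top} y = v\}$ together with a Brascamp--Lieb or Ball-type reverse inequality to decompose the Gaussian measure on $K$ into a product over segments, so that the classical Vaaler bound can be applied segment-by-segment. The $\log\log\log d$ overhead most likely traces to slack in this decomposition --- perhaps a dyadic grouping of segment lengths --- or to the need to re-sparsify the residual zonotope as the active set shrinks between partial coloring rounds.
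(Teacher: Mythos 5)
Your setup (partial coloring body $C_r$, Talagrand sparsification to $m = O(d\log d)$ segments, the Gaussian measure lower bound from the generalized Vaaler theorem feeding into an algorithmic partial coloring lemma) matches the paper's proof up to the point where the iteration begins. But the iteration itself has a genuine gap, and it is exactly where the $\log\log\log d$ must come from. Each round of the zonotope partial coloring costs $\Theta(\sqrt{d})$ in the $K$-norm, and this cost does \emph{not} decrease as the active set shrinks: the measure bound $\gamma_H(CK\cap H)\ge e^{-n}$ on the $n$-dimensional section only ever yields a partial coloring inside $c\sqrt{d}\,K$, with no dependence on $n$. So running the partial coloring for $O(\log n)$ rounds until all signs are fixed gives $O(\sqrt{d}\log n) = O(\sqrt{d}\log d)$, not $O(\sqrt{d}\log\log\log d)$. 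Your appeal to ``careful tracking of the residual zonotope's dimension and segment count between rounds'' does not supply a mechanism for the per-round cost to drop, since the zonotope $K$ itself never changes between rounds --- only the set of uncolored vectors does.

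The paper's actual device is a hybrid: run the zonotope partial coloring for only $t = \log\log(\tfrac{2m}{n})$ rounds (total cost $\sqrt{d}\cdot t \lesssim \sqrt{d}\log\log\log d$ once $m\lesssim d\log d$ and $n=d$), leaving $|I|\le n/2^t \approx d/\log d$ uncolored coordinates, and then \emph{switch norms}: write each remaining $v_j = \sqrt{d/m}\,A^\top u_j$ with $u_j\in B_\infty^m$ and apply Spencer's theorem to the lifted vectors $u_j$ in $\ell_\infty^m$, where the bound $\sqrt{|I|\log(2m/|I|)}$ genuinely improves as $|I|$ shrinks and evaluates to $O(\sqrt{d})$ at $|I| = d/\log d$. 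This Spencer endgame in the preimage cube is the missing idea. Relatedly, you misattribute the source of the triple logarithm: it does not come from slack in the Vaaler-type theorem (the paper's Theorem~\ref{thm:SlicesOfNormZonotopes} is tight up to constants), but purely from the number of partial-coloring rounds needed before the Spencer endgame takes over. As a smaller point, the paper's proof of the measure lower bound proceeds via \v{S}id\'ak--Khatri plus an iterated Kadison--Singer decomposition of the zonotope into $\Theta(m/d)$ well-conditioned blocks combined by log-concavity, rather than the Fourier/Brascamp--Lieb route you sketch; but since you treat that theorem as a black box for the purposes of Theorem~\ref{thm:VBofKwithK}, the decisive gap is the one in the coloring iteration.
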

The claim is invariant under linear transformations to $K$ and so it will be useful to place $K$ in a normalized position.
For this sake, we make the following definition:
\begin{definition}
  A matrix $A \in \setR^{m \times d}$ is called \emph{approximately regular} if the following holds:
  \begin{enumerate*}
  \item[(i)] The columns $A^{1},\ldots,A^d$ are orthonormal.
  \item[(ii)] The rows satisfy  $\|A_i\|_2 \leq 2 \sqrt{\frac{d}{m}}$ for all $i=1,\ldots,m$.
\end{enumerate*}
\end{definition}
Then we call a zonotope $K \subseteq \setR^d$ \emph{normalized} if there exists a matrix $A \in \setR^{m \times d}$ that is approximately
regular so that $K = \sqrt{\frac{d}{m}} A^\top B_{\infty}^m$. We choose the scaling so that any cube $B_{\infty}^d$ is indeed normalized
and zonotopes with any number of segments are comparable to $B_{\infty}^d$ in terms of volume and radius. 

Our main technical contribution is a tight lower bound for the Gaussian measure of sections of any normalized zonotope. 
\begin{theorem} \label{thm:SlicesOfNormZonotopes}
  For any normalized zonotope $K \subseteq \setR^d$, any subspace $H \subseteq \setR^d$ with $n := \dim(H)$
 and any $t \geq 1$, one has $\gamma_H(t \cdot C \cdot K \cap H) \geq \exp(-e^{-t^2/2} \cdot n)$ where $C>0$ is a universal constant.
\end{theorem}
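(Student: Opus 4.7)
The plan is to express the event $v\in tCK$ via the canonical LP witness, producing a conjunction of Gaussian slab events, and then bound the joint probability via Royen's Gaussian correlation inequality. The main challenge arises when the number of segments $m$ is much larger than the ambient dimension $d$.

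\textbf{Canonical sufficient condition.} Since $A^{\top}A=I_d$, the canonical witness $y^{\ast}:=Av\sqrt{m/d}$ satisfies $A^{\top}y^{\ast}=v\sqrt{m/d}$, so $\|y^{\ast}\|_\infty\leq tC$ implies $v\in tCK$. Equivalently, $v\in tCK$ whenever $|\langle A_i,v\rangle|\leq tC\sqrt{d/m}$ for every $i\in[m]$. For $v\sim N(0,I_H)$, set $\tilde A_i:=P_H A_i$. Approximate regularity gives $\|\tilde A_i\|_2^2\leq 4d/m$, and column orthonormality gives $\sum_i\|\tilde A_i\|_2^2=\tr(P_H\cdot A^{\top}A)=n$. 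Royen's Gaussian correlation inequality applied to the symmetric slabs $E_i=\{|\langle\tilde A_i,v\rangle|\leq tC\sqrt{d/m}\}$ yields
\[
  \gamma_H(tCK\cap H) \;\geq\; \prod_{i=1}^m \Pr[E_i] \;\geq\; \exp\!\Bigl(-4\sum_{i}\exp\!\bigl(-(tC)^2 d/(2m\|\tilde A_i\|_2^2)\bigr)\Bigr),
\]
using standard Gaussian tail bounds and $\log(1-x)\geq-2x$.

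\textbf{Worst-case optimization.} The map $\alpha\mapsto e^{-(tC)^2d/(2m\alpha)}$ is convex on $[0,4d/m]$ whenever $tC\geq 4$, so under the constraints $\sum\|\tilde A_i\|_2^2=n$ and $\|\tilde A_i\|_2^2\leq 4d/m$ the sum inside the exponent is maximized at the extreme configuration with $nm/(4d)$ indices saturating the upper bound, contributing $(nm/(4d))e^{-(tC)^2/8}$. Matching this against the target $(n/4)e^{-t^2/2}$ reduces to $(tC)^2/8-t^2/2\geq\log(m/d)$, which can be arranged with a universal $C$ when $m=O(d)$ but fails for $m\gg d$.

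\textbf{Main obstacle.} Over-counting of slab events when $m\gg d$ is the central difficulty. To close the gap, I would exploit that any shift of $y^{\ast}$ by $w\in\ker A^{\top}$ produces another valid witness. The plan is to dyadically group the $m$ rows by the scale of $\|\tilde A_i\|_2$ into $O(n)$ clusters of total squared norm $\Theta(d/n)$ each in $H$, producing a sparser sufficient condition composed of $O(n)$ cluster-level Gaussian slab events with balanced variances; Royen's inequality on these $O(n)$ events then yields $\exp(-ne^{-t^2/2})$ cleanly. The delicate step is verifying that the cluster-level events truly imply $v\in tCK$, which seems to require an explicit construction of a witness $y\in tCB_\infty^m$ from cluster averages, potentially via a recursive application of the theorem to a lower-dimensional sub-zonotope built from each cluster.
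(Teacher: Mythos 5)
Your first two steps are sound and essentially reproduce the paper's Lemma~\ref{lem:MeasureOfSliceDirectProof}: the witness $y^{\ast}=\sqrt{m/d}\,Av$ gives the sufficient slab condition, \v{S}id\'ak--Khatri (Royen is not needed since all the bodies are strips) factors the probability, and your worst-case optimization correctly shows that this route works only when $m=O(d)$ and provably loses a $\log(m/d)$ factor otherwise. So you have correctly located the difficulty. But the proposal does not resolve it, and the fix you sketch would not work as stated. Dyadically grouping the rows by the scale of $\|\tilde A_i\|_2$ gives no control over the \emph{directions} of the rows within a cluster: all rows of a cluster could be nearly parallel, in which case no bounded witness supported on that cluster contributes anything in the orthogonal directions, and the cluster-level slab events cannot imply $v\in tCK$. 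You flag exactly this step as unverified; it is the entire content of the hard case, and "recursive application of the theorem to a sub-zonotope" is circular without an independent way to produce spectrally balanced sub-zonotopes.

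The paper closes this gap with two ideas that are absent from your proposal. First, the partition of $[m]$ into $k=\Theta(m/d)$ blocks is obtained by iterating the Marcus--Spielman--Srivastava (Kadison--Singer) theorem, which guarantees the \emph{spectral} lower bound $\sum_{i\in J_\ell}A_iA_i^{\top}\succeq \Omega(\tfrac{d}{m})I_d$ for every block --- a condition on eigenvalues, not on row norms, and one that no greedy or dyadic bucketing by $\|\tilde A_i\|_2$ can deliver. Second, and more importantly, the combination step abandons the slab-intersection framework entirely: writing $K\supseteq K_1+\cdots+K_k=\sum_{\ell}\tfrac{1}{k}(kK_\ell)$ as a Minkowski average and invoking log-concavity of the Gaussian measure gives
\[
\gamma_H(tCK\cap H)\;\geq\;\prod_{\ell=1}^{k}\gamma_H\bigl(tC\cdot kK_\ell\cap H\bigr)^{1/k},
\]
where each blown-up piece $kK_\ell$ is spectrally full (its rows satisfy $\sum_i(\sqrt{k}A_i)(\sqrt{k}A_i)^{\top}\succeq \Omega(1)I_d$), so the direct bound you already proved applies to it with no loss. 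The geometric average over $k$ blocks then reproduces $\exp(-e^{-t^2/2}n)$ exactly. Without the MSS decomposition and the log-concavity step, the proof of the $m\gg d$ case is missing.
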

In order to prove Theorem~\ref{thm:SlicesOfNormZonotopes}, we show that a normalized zonotope can be decomposed into
$\Theta(\frac{m}{d})$ many smaller zonotopes with $\Theta(d)$ many segments each. This decomposition requires an iterative application
of the Kadison-Singer theorem by Marcus, Spielman and Srivastava~\cite{KadisonSingerProblem-MSS-AnnalsOfMath2015}.
Then we prove the statement of Theorem~\ref{thm:SlicesOfNormZonotopes} for such simpler zonotopes and derive the lower bound
on $\gamma_H(t \cdot C \cdot K \cap H)$ by using log-concavity of the Gaussian measure.

We can also use Theorem~\ref{thm:SlicesOfNormZonotopes} to show how to balance vectors between different normalized zonotopes:
\begin{theorem} \label{thm:VBofKwithQ}
  For any normalized zonotopes $K,Q \subseteq \setR^d$ one has $\vb(K,Q) \lesssim \sqrt{d \log d}$.
  Moreover, for any $v_1,\ldots,v_n \in K$ one can find in randomized polynomial time a coloring $x \in \{ -1,1\}^n$ such that ${\|\sum_{i=1}^n x_iv_i\|_Q \lesssim \sqrt{d \cdot \log \min\{ d,n\}}}$ .
\end{theorem}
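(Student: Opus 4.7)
The plan is to reduce Theorem~\ref{thm:VBofKwithQ} to a single application of the constructive Banaszczyk theorem (the Gram-Schmidt walk of \cite{GramSchmidtWalk-BansalDGL-STOC2018}) in an appropriate subspace, with Theorem~\ref{thm:SlicesOfNormZonotopes} supplying the required Gaussian lower bound on sections of $Q$.

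First I would observe that any normalized zonotope $K$ satisfies $K \subseteq \sqrt{d}\, B_2^d$: writing $K = \sqrt{d/m}\, A^\top B_\infty^m$ with $A \in \setR^{m\times d}$ approximately regular (so $A^\top A = I_d$ and hence $\|A^\top y\|_2 \leq \|y\|_2 \leq \sqrt{m}$ for $y \in [-1,1]^m$), any $v \in K$ has $\|v\|_2 \leq \sqrt{d}$. Therefore $u_i := v_i/\sqrt{d} \in B_2^d$ for every input vector. Let $H := \spn(v_1,\ldots,v_n)$ and $n' := \dim H \leq \min\{d,n\}$; all signed combinations of the $v_i$ lie in $H$. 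Apply Theorem~\ref{thm:SlicesOfNormZonotopes} to $Q$ with this subspace: choosing $t_0 = \Theta(\sqrt{\log \min\{d,n\}})$ so that $e^{-t_0^2/2}\, n' \leq \log 2$ yields $\gamma_H(t_0 C\cdot Q \cap H) \geq 1/2$, where $C$ is the constant from that theorem.

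Next, the Gram-Schmidt walk, applied intrinsically in $H$ to the vectors $u_1,\ldots,u_n$, produces in randomized polynomial time a coloring $x \in \{-1,1\}^n$ whose signed sum $\sum_i x_i u_i$ is sub-Gaussian in $H$ with constant scale. In particular, since the body $t_0 C\cdot Q \cap H$ has Gaussian measure at least $1/2$ in $H$, with constant probability $\sum_i x_i u_i \in C_0 \cdot (t_0 C\cdot Q \cap H)$ for some universal $C_0$, and scaling back gives
\[
  \Bigl\|\sum_{i=1}^n x_i v_i \Bigr\|_Q \;=\; \sqrt{d}\, \Bigl\|\sum_{i=1}^n x_i u_i \Bigr\|_Q \;\leq\; C_0\, C\, t_0\, \sqrt{d} \;=\; O\!\bigl(\sqrt{d\log\min\{d,n\}}\,\bigr).
\]
Standard repetition boosts the probability to any constant, delivering the algorithmic statement.

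The only substantive ingredient is Theorem~\ref{thm:SlicesOfNormZonotopes}, which is the paper's main technical contribution; once it is in hand the argument is a routine instance of the Banaszczyk framework, so I expect no serious obstacle. The one mild subtlety is to work with $\gamma_H$ on the subspace $H$ rather than with $\gamma_d$ on all of $\setR^d$: this is precisely what yields the $\log\min\{d,n\}$ improvement over the uniform bound $\log d$, since Theorem~\ref{thm:SlicesOfNormZonotopes} permits a smaller threshold $t_0$ when the slicing subspace has smaller dimension.
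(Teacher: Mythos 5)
Your proposal is correct and follows essentially the same route as the paper: bound the input vectors by $\sqrt{d}B_2^d$ via Lemma~\ref{lem:contained_ball}, apply Theorem~\ref{thm:SlicesOfNormZonotopes} to $Q$ on the span $H$ of the inputs with $t \approx \sqrt{2\log(2\dim H)}$ to get Gaussian measure at least $1/2$, and then invoke the constructive Banaszczyk theorem (Gram--Schmidt walk) inside $H$. The only cosmetic difference is that you unpack the sub-Gaussianity of the walk where the paper cites Theorem~\ref{thm:Banaszczyk} as a black box.
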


\section{Preliminaries}

We review a few facts that we rely on later.
\paragraph{Probability.}
By $\gamma_n$ we denote the \emph{(standard) Gaussian density}  $\frac{1}{(2\pi)^{n/2}}e^{-\|x\|_2^2/2}$.
For the corresponding distribution we will write $N(0,I_n)$. For a subspace $F \subseteq \setR^n$
we write $I_F \in \setR^{n \times n}$ as the identity on the subspace; in particular $I_F = \sum_{i=1}^{\dim(F)} u_iu_i^T$ where $u_1,\ldots,u_{\dim(F)}$ is any orthonormal basis of $F$. 
A \emph{strip} is a symmetric convex body of the form $P = \{ x \in \setR^n : |\left<a, x\right>| \leq 1\}$
with $a \in \setR^n$. 
\begin{theorem}[\v{S}id\'ak-Khatri] \label{lem:SidakKhatri}
  For any two symmetric convex bodies $P,Q \subseteq \setR^n$ where at least one is a strip,  one has $\gamma_n(P \cap Q) \geq \gamma_n(P) \cdot \gamma_n(Q)$.
\end{theorem}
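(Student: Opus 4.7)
The plan is to reduce the claim to a one-dimensional correlation inequality between two even functions that are both monotone in $|t|$. Without loss of generality assume $P = \{x : |\langle a, x\rangle| \le 1\}$ is the strip. By rotational invariance of $\gamma_n$ we may take $a$ parallel to the first coordinate axis, so that $P = \{x \in \setR^n : |x_1| \le c\}$ for some $c > 0$, while $Q$ remains symmetric and convex after the same rotation.

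First I would introduce the slice function
\[
f(t) \;:=\; \gamma_{n-1}\bigl(\{y \in \setR^{n-1} : (t,y) \in Q\}\bigr).
\]
Symmetry of $Q$ forces $f(-t) = f(t)$. Applying the Pr\'ekopa--Leindler inequality to the product of the Gaussian density on $\setR^n$ with the indicator of $Q$ (both log-concave) shows that the marginal $f$ is log-concave on $\setR$. Since an even log-concave function on $\setR$ is automatically non-increasing in $|t|$, the function $f$ has this monotonicity property.

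Next, Fubini and the normalization $\int_{\setR} \gamma_1(t)\,dt = 1$ rewrite the target inequality $\gamma_n(P \cap Q) \ge \gamma_n(P) \gamma_n(Q)$ as
\[
\int_{-c}^{c} f(t)\, \gamma_1(t)\, dt \;\ge\; \Bigl(\int_{-c}^{c} \gamma_1(t)\, dt\Bigr) \Bigl(\int_{\setR} f(t)\, \gamma_1(t)\, dt\Bigr),
\]
i.e.\ that $f$ and $\mathbf{1}_{[-c,c]}$ are positively correlated with respect to the standard Gaussian on $\setR$. Both functions are even and, as functions of $|t|$, non-increasing. Viewing them as functions of $|T|$ with $T \sim N(0,1)$, the classical one-dimensional Chebyshev (FKG-type) correlation inequality for two co-monotone functions of a single real random variable finishes the proof.

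The main technical step is securing the monotonicity of the slice function $f$ in $|t|$ from convexity and symmetry of $Q$; the rest is essentially bookkeeping. The strip assumption on $P$ is used in exactly one essential place: after rotation, $P$ depends on the single coordinate $x_1$, which is precisely what enables the reduction to a one-dimensional correlation inequality. For two general symmetric convex bodies this reduction breaks down, consistent with the fact that the full Gaussian correlation inequality is a much deeper statement (Royen, 2014).
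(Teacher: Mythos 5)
Your argument is correct, and it is the standard proof of \v{S}id\'ak's lemma; note that the paper itself does not prove this statement but cites it as a classical result (Appendix B only proves Lemmas~\ref{lem:GaussianMeasureOfStrip} and~\ref{lem:ComparisonGaussians}), so there is no in-paper proof to compare against. One small imprecision worth fixing: you should apply Pr\'ekopa's theorem to the function $(t,y)\mapsto \mathbf{1}_Q(t,y)\,\gamma_{n-1}(y)$, whose $y$-marginal is exactly $f(t)$, rather than to $\mathbf{1}_Q\cdot\gamma_n$, whose marginal is $\gamma_1(t)f(t)$ --- a ratio of log-concave functions need not be log-concave, so the latter does not directly yield log-concavity of $f$. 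With that adjustment, every step goes through: $f$ is even and log-concave, hence non-increasing in $|t|$; Fubini reduces the claim to $\E[f(T)\mathbf{1}_{[-c,c]}(T)]\ge\E[f(T)]\,\E[\mathbf{1}_{[-c,c]}(T)]$ for $T\sim N(0,1)$; and Chebyshev's correlation inequality applied to the two non-increasing functions of $|T|$ finishes the proof. Your closing remark about where the strip hypothesis enters is also accurate.
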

More recently, Royen~\cite{Royen2014Arxiv} proved that this is indeed true for any pair of symmetric convex bodies, but the weaker result suffices for us.
\begin{lemma} \label{lem:ProbOfSubspaceVsBody}
  For any symmetric convex body $K$ and any subspace $H \subseteq \setR^n$ one has $\gamma_H(K \cap H) \geq \gamma_n(K)$.
\end{lemma}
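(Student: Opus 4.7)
The plan is a standard Fubini-plus-symmetrization argument. I decompose $\setR^n = H \oplus H^\perp$ so that the Gaussian density factors as $\gamma_n = \gamma_H \otimes \gamma_{H^\perp}$, and for each $y \in H^\perp$ I set $K_y := \{x \in H : x + y \in K\}$, i.e.\ the slice of $K$ at ``height'' $y$ translated back into $H$. By Fubini,
\[
\gamma_n(K) \;=\; \int_{H^\perp} \gamma_H(K_y)\, d\gamma_{H^\perp}(y).
\]
So it suffices to show that $\gamma_H(K_y) \leq \gamma_H(K_0) = \gamma_H(K \cap H)$ for every $y \in H^\perp$, since then one can pull the bound out of the integral and use $\gamma_{H^\perp}(H^\perp)=1$.

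To prove this pointwise inequality, I set $\phi(y) := \gamma_H(K_y)$ and establish two properties. First, $\phi$ is symmetric in $y$: since $K = -K$ and $\gamma_H$ is even, the change of variables $x \mapsto -x$ on $H$ gives $\phi(-y) = \phi(y)$. Second, $\phi$ is log-concave on $H^\perp$: the function $F(x,y) := \mathbf{1}_K(x+y)\,\gamma_H(x)$ on $H \times H^\perp$ is log-concave (both factors are, the first being the indicator of a convex set and the second a Gaussian), so Pr\'ekopa--Leindler gives log-concavity of its marginal $\phi(y) = \int_H F(x,y)\,dx$. A symmetric log-concave function on a Euclidean space attains its maximum at the origin, so $\phi(y) \leq \phi(0)$, which is the desired inequality.

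Combining the two steps,
\[
\gamma_n(K) \;\leq\; \gamma_H(K \cap H) \cdot \int_{H^\perp} d\gamma_{H^\perp}(y) \;=\; \gamma_H(K \cap H),
\]
as claimed. (Equivalently, one can phrase the pointwise bound on $\phi$ as an instance of Anderson's inequality applied to the Gaussian marginal in $H$.) The only conceptual step is recognizing that the slice function is simultaneously symmetric and log-concave; after that the argument is pure bookkeeping, so I do not anticipate a serious obstacle.
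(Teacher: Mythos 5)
Your proof is correct and complete: the Fubini factorization $\gamma_n = \gamma_H \otimes \gamma_{H^\perp}$, the log-concavity of the slice function via Pr\'ekopa--Leindler, and the observation that an even log-concave function is maximized at the origin together give exactly the claimed inequality. The paper itself does not actually write out a proof of this lemma (Appendix~\ref{sec:GaussianMeasure} only proves Lemmas~\ref{lem:GaussianMeasureOfStrip} and~\ref{lem:ComparisonGaussians}), so there is nothing to diverge from; your argument is the standard one. It is worth noting that the lemma also follows in one line from the paper's Lemma~\ref{lem:ComparisonGaussians} by taking $A = I_H$ and $B = I_n$, since $\Pr_{y \sim N(0,I_H)}[y \in K] = \gamma_H(K \cap H)$; the proof of that comparison lemma rests on the same Anderson-type fact you invoke, so the two routes are really the same mechanism packaged differently.
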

We will use the following convenient estimate on the Gaussian measure of a strip:
\begin{lemma} \label{lem:GaussianMeasureOfStrip}
  For any $a \in \setR^n$ with $\|a\|_2 \leq 1$ and $t \geq 1$ one has
  \[\Pr_{y \sim N(0,I_n)}[|\left<a,y\right>| \leq t] \geq \exp(-e^{-t^2/2} \cdot \|a\|_2^2).\]
\end{lemma}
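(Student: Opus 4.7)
The plan is to reduce the statement to one dimension and then prove a sharp Gaussian tail estimate.

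First, since $y \sim N(0, I_n)$, the inner product $\langle a, y\rangle$ is distributed as $N(0, \|a\|_2^2)$. Setting $\sigma := \|a\|_2 \leq 1$, the claim reduces to the one-dimensional inequality $\Pr_{Z \sim N(0,1)}[|\sigma Z| \leq t] \geq \exp(-\sigma^2 e^{-t^2/2})$ for $\sigma \in (0, 1]$ and $t \geq 1$. Taking $-\log$ on both sides and using the elementary bound $-\log(1 - p) \leq p/(1 - p)$ (valid for $p \in [0,1)$), it suffices to show that $\Pr[|Z| > u]/\Pr[|Z| \leq u] \leq \sigma^2 e^{-t^2/2}$ where $u := t/\sigma \geq 1$.

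The main technical step is a sharpening of Mills' ratio. Writing $1 - \Phi(u) = \phi(u) R(u)$ with $R(u) := \int_0^\infty e^{-us - s^2/2}\,ds$ satisfying the ODE $R'(u) = uR(u) - 1$, I would prove that $R(u) \leq \int_0^u e^{-s^2/2}\,ds$ for $u \geq 1$. This follows by defining $L(u) := \int_0^u e^{-s^2/2}\,ds - R(u)$, checking $L(1) > 0$ by direct calculation, and noting that $L'(u) = e^{-u^2/2} - uR(u) + 1 \geq 0$ for $u \geq 1$ via the crude Mills bound $R(u) \leq 1/u$. Translated, this yields the sharp tail bound $\Pr[|Z| > u] \leq e^{-u^2/2}\,\Pr[|Z| \leq u]$ for $u \geq 1$, which already proves the base case $\sigma = 1$ of the lemma.

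For general $\sigma < 1$, this tail bound combines with $e^{-u^2/2} \leq \sigma^2 e^{-t^2/2}$ — equivalent to $u^2 - t^2 \geq 4 \log(u/t)$ and valid whenever $t \geq \sqrt{2}$ (since $\sup_{v\geq 1} 4\log v/(v^2-1) = 2$) — to complete the proof in that range. The residual regime $t \in [1, \sqrt{2})$, where $\sigma$ is close to $1$, is the main technical obstacle: here the naive chain of inequalities is not quite tight enough, and one must either use a second-order refinement $-\log(1-p) \leq p + p^2/(2(1-p))$ combined with the sharp tail bound above, or directly verify the extremal one-parameter family $t = 1$, $\sigma = 1/u$ via the same boundary-value and derivative-monotonicity scheme applied to an appropriate modification of $L$.
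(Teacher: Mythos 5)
Your one-dimensional reduction and your Mills-ratio lemma are both correct: writing $1-\Phi(u)=\phi(u)R(u)$ with $R'(u)=uR(u)-1$, checking $L(1)>0$ for $L(u):=\int_0^u e^{-s^2/2}\,ds-R(u)$, and using $R(u)\le 1/u$ to get $L'(u)\ge e^{-u^2/2}>0$ does yield $\Pr[|Z|>u]\le e^{-u^2/2}\Pr[|Z|\le u]$ for $u\ge1$; combined with $-\log(1-p)\le p/(1-p)$ this settles $\sigma=1$ and, via $t^2(v^2-1)\ge 4\log v$, all $t\ge\sqrt2$. The genuine gap is the regime $t\in[1,\sqrt2)$ that you leave open, and neither of your proposed repairs closes it. First, the failure region is much larger than ``$\sigma$ close to $1$'': at $t=1$ the inequality $v^2-1\ge 4\log v$ fails for all $1<v\lesssim 1.87$, i.e.\ all $\sigma\gtrsim 0.53$. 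Second, the second-order refinement $-\log(1-p)\le p+\frac{p^2}{2(1-p)}$ combined with your tail bound still fails numerically: at $t=1$, $\sigma=0.7$ one has $u\approx1.429$, $e^{-u^2/2}\approx0.360$, and the resulting upper bound $\frac{e^{-u^2/2}}{1+e^{-u^2/2}}\bigl(1+\tfrac12 e^{-u^2/2}\bigr)\approx0.313$ exceeds the target $\sigma^2e^{-1/2}\approx0.297$. The root cause is that your estimate $\Pr[|Z|>u]\le e^{-u^2/2}\Pr[|Z|\le u]$ is off by roughly a factor of $2$ for moderate $u$ (the true ratio at $u\approx1.43$ is about $0.18$), and that constant factor is exactly what the final step needs.

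This is precisely where the paper's proof differs: it quotes the Szarek--Werner bound $\Pr[g>u]<\frac{1}{\sqrt{2\pi}}\frac{4e^{-u^2/2}}{3u+\sqrt{u^2+8}}$, whose prefactor $\frac{4}{3\sqrt{2\pi}}\approx0.53$ for $u\ge1$ supplies the missing constant; a convexity argument then gives $\Pr[|g|\le u]\ge\exp(-\tfrac23e^{-u^2/2})$, and the factor $\tfrac23$ makes $\tfrac23e^{-t^2/(2\sigma^2)}\le\sigma^2e^{-t^2/2}$ go through for all $t\ge1$ using only $e^z\ge ez$. Your argument can be salvaged in the same self-contained spirit: a short optimization shows that any comparison $R(u)\le c\int_0^u e^{-s^2/2}\,ds$ with $c<e^{1/2}/2\approx0.824$ suffices for all $t\ge1$, $\sigma\le1$, and $c=0.8$ is provable by exactly your ODE scheme since $0.8\int_0^1e^{-s^2/2}\,ds\approx0.685>R(1)\approx0.656$ and the derivative of $0.8\int_0^u e^{-s^2/2}\,ds-R(u)$ is still at least $0.8e^{-u^2/2}>0$. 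But as written, the proof does not establish the lemma for $t\in[1,\sqrt2)$.
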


The following comparison inequality (see e.g. Ledoux and Talagrand~\cite{LedouxTalagrandBook2011}) will also be useful:

\begin{lemma}\label{lem:ComparisonGaussians}
  Let $K$ be a symmetric convex body and let $0 \preceq A \preceq B$. Then
  \[\Pr_{y \sim N(0,A)}[y \in K] \ge \Pr_{y \sim N(0,B)}[y \in K].\]
\end{lemma}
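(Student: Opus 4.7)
The plan is to reduce the claim to Anderson's inequality by decomposing the $N(0,B)$ distribution as an independent sum. Since $A \preceq B$, the matrix $C := B - A$ is positive semidefinite, so I would introduce independent Gaussians $X \sim N(0,A)$ and $Z \sim N(0,C)$ and observe that $X + Z \sim N(0,B)$ by additivity of independent centered Gaussians. The target inequality then becomes
\[
\Pr[X \in K] \;\geq\; \Pr[X + Z \in K].
\]

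Next, I would invoke Anderson's inequality, which says that for any symmetric convex body $K \subseteq \setR^n$, any centered symmetric log-concave density on $\setR^n$, and any fixed vector $z$, the translated mass $\Pr[X \in K - z]$ is maximized at $z = 0$. The Gaussian density with covariance $A$ is symmetric and log-concave, so this applies to $X$. The standard proof of Anderson's inequality proceeds via Brunn-Minkowski (or equivalently Pr\'ekopa-Leindler) applied to the function $z \mapsto \Pr[X \in K - z]$, showing that this function is itself log-concave and symmetric and therefore peaks at the origin; I would quote this rather than reproving it.

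With Anderson's inequality in hand, the conclusion follows by conditioning on $Z$ and integrating:
\[
\Pr[X + Z \in K] \;=\; \E_Z\bigl[\Pr[X \in K - Z \mid Z]\bigr] \;\leq\; \E_Z\bigl[\Pr[X \in K]\bigr] \;=\; \Pr[X \in K],
\]
which is exactly the lemma.

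The only real obstacle is the possibility that $A$ (or $C$) is rank-deficient, in which case $X$ or $Z$ lives in a proper subspace and the densities are singular on $\setR^n$. I would handle this by the standard perturbation trick: replace $A$ by $A_\varepsilon := A + \varepsilon I$ and $B$ by $B_\varepsilon := B + \varepsilon I$, apply the non-degenerate version of Anderson's inequality to get the inequality with $A_\varepsilon, B_\varepsilon$, and then let $\varepsilon \to 0$, using continuity of Gaussian measure in the covariance. Everything else is bookkeeping, so once Anderson's inequality is granted the argument is essentially a one-line conditional expectation.
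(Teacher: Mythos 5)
Your proof is correct and follows essentially the same route as the paper: the paper likewise writes $N(0,B)$ as the independent sum of $N(0,A)$ and $N(0,B-A)$ and then applies Anderson-type log-concavity to conclude $\Pr[y+z\in K]\le \Pr[y\in K]$ after averaging over $z$. Your version merely spells out the appeal to Anderson's inequality and the degenerate-covariance perturbation that the paper leaves implicit.
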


We prove these lemmas in Appendix B. The following lemma allows us to dismiss constant scaling factors, see ~\cite{tkocz2015high}:

\begin{lemma} \label{lem:ScalingLemma}
 Let $K \subset \setR^n$ be a measurable set and $B$ be an Euclidean ball centered at the origin such that $\gamma_n(K) = \gamma_n(B)$. Then $\gamma_n(tK) \ge \gamma_n(tB)$ for all $t \in [0, 1]$. In particular, if $\gamma_n(C_1 \cdot K) \ge e^{-C_1 n}$ for some constant $C_1 \geq 1$ then also $\gamma_n(K) \ge e^{-C_2 n}$ for some $C_2 := C_2(C_1) >0$.  
\end{lemma}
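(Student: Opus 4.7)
The plan is to reduce the first claim to a clean rearrangement inequality after a change of variables, and then to deduce the second claim as a quick corollary. Starting from the substitution $y = tx$, one has
\[
\gamma_n(tK) \;=\; \frac{t^n}{(2\pi)^{n/2}} \int_K e^{-t^2\|x\|_2^2/2}\,dx
\]
and the analogous identity for $B$. So writing $f(x) := e^{-\|x\|_2^2/2}$ and $g(x) := e^{-t^2\|x\|_2^2/2}$, the hypothesis $\gamma_n(K) = \gamma_n(B)$ says $\int_K f = \int_B f$, and the desired inequality $\gamma_n(tK) \geq \gamma_n(tB)$ becomes $\int_K g \geq \int_B g$.

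The key step is to exploit that, for $t \in [0,1]$, the ratio $(g/f)(x) = e^{(1-t^2)\|x\|_2^2/2}$ is a radially nondecreasing function of $\|x\|_2$. Since $B$ is a centered Euclidean ball $B_{r_0}$, every $x \in K \setminus B$ satisfies $\|x\|_2 \geq r_0$ while every $x \in B \setminus K$ satisfies $\|x\|_2 \leq r_0$. Setting $\lambda := (g/f)(r_0)$ as a pivot, monotonicity gives $g(x) \geq \lambda f(x)$ on $K \setminus B$ and $g(x) \leq \lambda f(x)$ on $B \setminus K$. Subtracting the two resulting integral inequalities and invoking $\int_{K \setminus B} f = \int_{B \setminus K} f$ (which is precisely the equal-measure hypothesis after removing $K \cap B$) closes the argument.

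For the second claim I plan to apply the first part to $\tilde K := C_1 K$ with $t := 1/C_1 \in (0,1]$. This yields $\gamma_n(K) \geq \gamma_n(B/C_1)$, where $B$ is the centered ball with $\gamma_n(B) = \gamma_n(\tilde K) \geq e^{-C_1 n}$. A one-line change of variables $y = C_1 x$ inside the integral defining $\gamma_n(B/C_1)$, combined with the pointwise bound $e^{-\|y\|_2^2/(2 C_1^2)} \geq e^{-\|y\|_2^2/2}$ (valid since $C_1 \geq 1$), gives $\gamma_n(B/C_1) \geq C_1^{-n}\,\gamma_n(B)$, so $\gamma_n(K) \geq e^{-(C_1 + \log C_1)n}$ and we may take $C_2 := C_1 + \log C_1$. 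The only real obstacle is the rearrangement step above: it uses the hypothesis $t \leq 1$ in full (that is exactly when $g/f$ is radially monotone), and it requires nothing about $K$ beyond measurability — in particular no convexity.
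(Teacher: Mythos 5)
Your proof is correct: the change of variables, the radial monotonicity of $g/f=e^{(1-t^2)\|x\|_2^2/2}$ for $t\le 1$, the pivot at $\lambda=(g/f)(r_0)$ combined with $\int_{K\setminus B}f=\int_{B\setminus K}f$, and the deduction $\gamma_n(K)\ge C_1^{-n}\gamma_n(B)\ge e^{-(C_1+\log C_1)n}$ all check out (the case $t=0$ and the degenerate case $\gamma_n(C_1K)=1$ are trivial). The paper does not prove this lemma but only cites \cite{tkocz2015high}, and your argument is essentially the standard one from that reference, so there is nothing further to compare.
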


\paragraph{Discrepancy theory.}
First we give a full statement of Spencer's theorem that we mentioned earlier:
\begin{theorem}[Spencer's Theorem~\cite{SixStandardDeviationsSuffice-Spencer1985,ConstructiveDiscMin-LovettMeka-FOCS2012}] \label{thm:spencer}
  For any $A \in [-1,1]^{m \times n}$ with $m \geq n$ there are polynomial time computable signs $x \in \{ -1,1\}^n$ so that
  $\|Ax\|_{\infty} \lesssim  \sqrt{n \log(\frac{2m}{n})}$. More generally, for any shift $x_0 \in [-1,1]^n$, there is a polynomial time computable $x \in \setR^n$ so that $x+x_0 \in \{-1,1\}^n$ and $\|A(x + x_0)\|_\infty \lesssim \sqrt{n \log(\frac{2m}{n})}$.
\end{theorem}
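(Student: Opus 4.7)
The plan is to prove Theorem~\ref{thm:spencer} via the classical partial coloring method. The key lemma to establish first is: for any $A \in [-1,1]^{m \times n}$ with $m \geq n$ and any starting point $x_0 \in [-1,1]^n$, there exists a partial coloring $y \in [-1,1]^n$ with at least $n/2$ coordinates of $y$ lying in $\{-1,+1\}$ such that $\|A(y - x_0)\|_\infty \lesssim \sqrt{n \log(2m/n)}$.

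To prove this partial coloring lemma, set $t := C \sqrt{n \log(2m/n)}$ for a sufficiently large constant $C$ and consider the symmetric convex body $L := \{z \in \setR^n : \|Az\|_\infty \leq t\}$, an intersection of $m$ symmetric strips. Each row $A_i$ has Euclidean norm at most $\sqrt n$, so Lemma~\ref{lem:GaussianMeasureOfStrip} (applied to $A_i/\sqrt n$ with threshold $t/\sqrt n \geq 1$) gives $\gamma_n(\{z : |\langle A_i, z\rangle| \leq t\}) \geq \exp(-e^{-t^2/(2n)})$. Iterating \v{S}id\'ak--Khatri (Theorem~\ref{lem:SidakKhatri}) across the $m$ strips yields $\gamma_n(L) \geq \exp(-m \cdot e^{-t^2/(2n)}) \geq \exp(-n/16)$ for the chosen $t$. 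A standard partial-coloring argument (due to Giannopoulos, based on pigeonholing sign vectors from a fine grid) then shows that any origin-symmetric convex body $L$ with $\gamma_n(L) \geq \exp(-n/16)$ contains a point $z \in L \cap ([-1,1]^n - x_0)$ with at least $n/2$ coordinates on the boundary of the translated cube; setting $y := z + x_0$ produces the desired partial coloring.

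Iterating this lemma starting from the given $x_0$, at round $k$ the number of still-free coordinates is at most $n_k \leq n/2^k$, and the incremental movement discrepancy is $O(\sqrt{n_k \log(2m/n_k)})$. Summing this geometric-type series over $O(\log n)$ rounds yields the total bound $O(\sqrt{n \log(2m/n)})$, dominated by the first term up to a constant. The shifted version of the theorem then follows immediately since the partial coloring lemma is already formulated with an arbitrary starting point $x_0$; taking $x := (\text{final } \pm 1 \text{ vector}) - x_0$ gives $x + x_0 \in \{-1,1\}^n$ and the required bound on the accumulated movement.

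The main obstacle is making the partial coloring lemma algorithmic, since the Gaussian-measure-plus-pigeonhole argument is inherently non-constructive. For a randomized polynomial time implementation I would invoke the random walk of Lovett and Meka~\cite{ConstructiveDiscMin-LovettMeka-FOCS2012}: simulate a discretized Brownian motion started at $x_0$ inside the cube $[-1,1]^n$ which, whenever it approaches a face of the cube or an active hyperplane $|\langle A_i, z\rangle| = t$, projects subsequent increments onto that face or hyperplane. A dimension-counting argument (the active subspace shrinks only when a new constraint becomes tight, while a concentration estimate shows very few discrepancy constraints are hit if $\gamma_n(L) \geq \exp(-n/16)$) gives that with constant probability the walk reaches a state with at least $n/2$ cube faces active while remaining in $L$, in polynomial time. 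This delivers the algorithmic partial coloring lemma and hence, via the iteration above, the polynomial-time algorithm claimed in Theorem~\ref{thm:spencer}.
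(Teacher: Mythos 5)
This statement is not proved in the paper at all: Theorem~\ref{thm:spencer} is imported by citation from \cite{SixStandardDeviationsSuffice-Spencer1985} and \cite{ConstructiveDiscMin-LovettMeka-FOCS2012}, so there is no in-paper proof to compare against. Your reconstruction follows exactly the standard route of those references --- a partial coloring lemma obtained from a Gaussian measure lower bound on the body $L=\{z:\|Az\|_\infty\le t\}$ via \v{S}id\'ak--Khatri and the strip estimate, Giannopoulos-style pigeonholing for existence, geometric iteration over $O(\log n)$ rounds, and the Lovett--Meka walk for constructivity --- and the argument is sound. The computation $\gamma_n(L)\ge\exp(-m\,e^{-t^2/(2n)})\ge\exp(-n/16)$ for $t=C\sqrt{n\log(2m/n)}$ with $C$ large is correct (using $m\ge n$), the summation $\sum_k\sqrt{(n/2^k)\log(2m2^k/n)}\lesssim\sqrt{n\log(2m/n)}$ is the standard geometric-decay bound, and formulating the partial coloring lemma with an arbitrary shift $x_0$ does immediately give the ``more general'' clause. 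Two small remarks: the specific constant $n/16$ in the measure threshold is not universal across versions of the partial coloring lemma, but since you control $C$ this is harmless; and you could shortcut the entire algorithmic discussion by invoking the paper's own Theorem~\ref{thm:partial_coloring} (the algorithmic partial coloring of [RR22], which accepts any constant $\alpha$ in the measure bound $e^{-\alpha n}$ and an arbitrary shift), rather than re-sketching the Lovett--Meka walk from scratch.
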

To be exact, the first algorithm giving a bound of $O(\sqrt{n} \log(\frac{2m}{n}))$ is due to Bansal~\cite{ConstructiveDiscMin-Bansal-FOCS2010} and the tight algorithmic bound is due to Lovett and Meka~\cite{ConstructiveDiscMin-LovettMeka-FOCS2012}.  

 We say that a vector $x \in \setR^n$ is a \emph{good partial coloring} if $x \in [-1,1]^n$
 with $|\{ j \in [n] : x_j \in \{ -1,1\} \}| \geq n/2$. We will need a connection between good partial colorings and
 Gaussian measure lower bounds.

\begin{theorem}[\cite{RR22}, special case of Theorem 6] \label{thm:partial_coloring}
For any $\alpha > 0$, there is a constant $c:= c(\alpha) > 0$ and a randomized polynomial time algorithm that for a symmetric convex body $K \subseteq \mathbb{R}^n$, a $2n/3$-dimensional subspace $F \subseteq \setR^n$ with $\gamma_F(K \cap F) \geq e^{-\alpha n}$ and a shift $y \in (-1,1)^n$, finds $x \in c\cdot K \cap F$ so that $x+y$ is a good partial coloring.
\end{theorem}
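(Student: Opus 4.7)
The plan is to prove this via a Brownian-motion style partial coloring algorithm in the subspace $F$, as in Lovett--Meka~\cite{ConstructiveDiscMin-LovettMeka-FOCS2012} and subsequent refinements. Let $z := x+y$ denote the ``real'' coloring we want to build. Start the walk at $X_0 = 0 \in F$ and evolve $dX_t = P_{F_t}\, dB_t$, where $B_t$ is a standard Brownian motion on $\setR^n$ and $P_{F_t}$ is the orthogonal projection onto the current \emph{active} subspace $F_t \subseteq F$. Initially $F_0 = F$; whenever $(X_t+y)_i$ hits $\pm 1$, freeze coordinate $i$ by intersecting the active subspace with $e_i^\perp$, and whenever $X_t$ touches the boundary of the scaled body $cK$, we remove from $F_t$ the outward normal direction to $cK$ at $X_t$. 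The walk is stopped at the first time $T$ when $F_T = \{0\}$.

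By construction the output $X_T$ lies in $cK \cap F$. The key claim is that with positive probability at least $n/2$ coordinates of $X_T+y$ have been frozen (rather than the stopping being caused by $K$-constraints only). For this, consider the potential $\Phi_t = \|X_t\|_2^2$ and the quantities $k_t$ (number of frozen coordinates) and $m_t$ (number of active $K$-constraints). By It\^o's formula, $d\Phi_t = \dim(F_t)\,dt + \text{martingale}$, and note that $\dim(F_t) \geq \dim(F) - k_t - m_t \geq \tfrac{2n}{3} - k_t - m_t$. So whenever $k_t < n/2$, the dimension of $F_t$ and hence the growth of $\Phi_t$ is driven by the $K$-constraints that have accumulated.

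The main obstacle, and heart of the argument, is to bound $m_t$ using the hypothesis $\gamma_F(K \cap F) \geq e^{-\alpha n}$. Standard Gaussian-measure calculus (as in \cite{ConstructiveDiscrepancy-Rothvoss-FOCS2014,DBLP:journals/corr/LevyRR16}) gives the following ``local'' bound: if $X_t \in \partial(cK) \cap F$ with $c = c(\alpha)$ sufficiently large, then the outward normal cone at $X_t$ has ``small'' Gaussian width on $F$, so the number of simultaneously active face constraints accumulated over time is at most $\alpha' n$ for a small constant $\alpha' = \alpha'(\alpha)$. Combined with the previous paragraph this forces $k_T \geq n/2$ once $T$ is large enough, while staying in $cK \cap F$.

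Two technical subtleties deserve comment. First, the shift $y \in (-1,1)^n$ makes the coordinate slabs $\{(x+y)_i \in [-1,1]\}$ asymmetric around $0$, but since $y$ is fixed this only changes the stopping thresholds and does not affect the martingale/potential argument; what matters is that $0$ lies strictly inside each slab. Second, choosing $\dim(F) = 2n/3$ (rather than, say, $n/2$) provides the slack needed so that $\dim(F_t) \geq n/6$ as long as fewer than $n/2$ coordinates are frozen and the $K$-constraints contribute at most, say, $n/6$ active faces, which is exactly the regime the Gaussian measure bound purchases. The algorithmic version follows by discretizing the SDE with a small step size $\varepsilon$, exactly as in~\cite{ConstructiveDiscMin-LovettMeka-FOCS2012,ConstructiveDiscrepancy-Rothvoss-FOCS2014}, yielding the polynomial-time guarantee.
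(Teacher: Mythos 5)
The paper does not prove this statement at all --- it is imported as a black box from \cite{RR22}, and the known proofs (Rothvoss's FOCS 2014 argument and its extension in \cite{RR22}) work by sampling a single Gaussian in $F$ and taking its Euclidean \emph{projection} onto $cK \cap ([-1,1]^n - y) \cap F$, then showing by a global Gaussian-measure argument that many cube constraints are tight at the projection. Your Lovett--Meka-style walk is a genuinely different route, and it has two gaps that I do not see how to close. First, the freezing mechanism is specific to polytopes: when a slab constraint of the cube becomes tight, the walk restricted to that slab's bounding hyperplane stays on the boundary of the slab. For a general convex body this fails --- if $cK$ is strictly convex (say a Euclidean ball), then after hitting $\partial(cK)$ and removing only the outward normal direction, any further motion is tangential and immediately exits $cK$. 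So your walk does not even guarantee $X_T \in cK$, and patching this (e.g.\ by reflecting, or by removing the whole normal cone) destroys the dimension accounting.

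Second, and more fundamentally, the step you call ``standard Gaussian-measure calculus'' --- that the normal directions accumulated at boundary hits of $cK$ span at most $\alpha' n$ dimensions --- is precisely the content of the theorem, and it is asserted rather than proved. The cited works do not contain a local statement of the form ``if $\gamma_F(K\cap F)\ge e^{-\alpha n}$ and $x\in\partial(cK)$ then the normal cone at $x$ is small'': such a pointwise claim is false in general (take $K$ with a sharp vertex of large normal cone; the body can still have large Gaussian measure, and the walk can hit that vertex). The hypothesis $\gamma_F(K\cap F)\ge e^{-\alpha n}$ can only be exploited \emph{globally}, which is exactly what the projection argument of \cite{ConstructiveDiscrepancy-Rothvoss-FOCS2014, RR22} does: it shows that the set of Gaussians whose projection has fewer than $n/2$ tight cube coordinates has small measure, via a conditioning/shifting argument that uses the measure lower bound for the whole body, not face-by-face slack bounds. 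Your remarks on the shift $y$ and on the role of the $2n/3$-dimensional subspace are fine, and the reduction from arbitrary $\alpha$ to small $\alpha$ via a constant rescaling (Lemma~\ref{lem:ScalingLemma}) is also standard, but without a correct replacement for the two steps above the argument does not constitute a proof.
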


We will also need a theorem of Banaszczyk~\cite{Banaszczyk-RSA1998} (whose algorithmic version is due to~\cite{GramSchmidtWalk-BansalDGL-STOC2018}).
\begin{theorem}[Banaszczyk's Theorem]\label{thm:Banaszczyk}
  Let $K \subseteq \setR^{d}$ be a convex set with $\gamma_d(K) \geq \frac{1}{2}$ and let $v_1,\ldots,v_n \in B_2^d$.
  Then there is a randomized polynomial time algorithm to compute signs $x \in \{ -1,1\}^n$ so that $\sum_{j=1}^n x_jv_j \in CK$
  where $C>0$ is a universal constant.
\end{theorem}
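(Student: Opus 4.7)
The plan is to establish both the existential bound and the algorithm via the Gram--Schmidt walk of~\cite{GramSchmidtWalk-BansalDGL-STOC2018}. Let $V \in \setR^{d \times n}$ be the matrix with columns $v_1,\ldots,v_n \in B_2^d$. The goal is to produce a random sign vector $X_\infty \in \{-1,1\}^n$ such that $Y := V X_\infty$ is $O(1)$-subgaussian in every direction, i.e.\
\[
\E\bigl[\exp(\langle \theta, Y \rangle)\bigr] \le \exp\bigl( C \|\theta\|_2^2 / 2 \bigr) \quad \text{for all } \theta \in \setR^d.
\]
Combined with $\gamma_d(K) \ge 1/2$, this subgaussianity will force $\Pr[Y \in C'K] > 0$, and repeating the polynomial-time walk a constant number of times gives the algorithmic claim.

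I would construct a continuous-time martingale $(X_t)_{t \ge 0}$ in $[-1,1]^n$ with $X_0 = 0$ that almost surely terminates at a vertex. At each time $t$, with alive coordinates $A_t = \{j : |X_t(j)| < 1\}$ and a pivot $p_t \in A_t$ chosen by a deterministic rule, let $u_t \in \setR^n$ be the vector supported on $A_t$ with $u_t(p_t) = 1$ that minimizes $\|V u_t\|_2$, and drive the walk by $dX_t = u_t \, dB_t$ for a standard one-dimensional Brownian motion $B_t$, freezing coordinates as they hit $\pm 1$. Since $u_t = e_{p_t}$ is feasible, $\|V u_t\|_2 \le \|v_{p_t}\|_2 \le 1$, and the Gram--Schmidt minimization additionally forces $V u_t$ to be orthogonal in $\setR^d$ to the $V$-image of all other alive coordinates. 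A careful accounting of the accumulated quadratic variation $\int_0^\infty \langle \theta, V u_t \rangle^2 \, dt$ using this orthogonality shows it is bounded by $C \|\theta\|_2^2$ uniformly in $\theta$, and a standard exponential martingale inequality then yields the claimed subgaussianity of $\langle \theta, Y \rangle$. Convolving $Y$ with a vanishingly small Gaussian and comparing against a reference Gaussian of covariance $\sigma^2 I_d$ via Lemma~\ref{lem:ComparisonGaussians} reduces $\Pr[Y \in C'K] > 0$ to the hypothesis $\gamma_d(K) \ge 1/2$.

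The main obstacle is the subgaussian analysis of the walk: uniformly controlling $\int_0^\infty \langle \theta, V u_t \rangle^2 \, dt$ over all $\theta$ depends crucially on the Gram--Schmidt choice of update directions, since that is what makes successive image-increments approximately orthogonal. A naive isotropic random walk on the alive coordinates would lose an unavoidable $\sqrt{\log n}$ factor. A secondary subtlety is the final translation from subgaussianity to actual membership in $C'K$: the hypothesis $\gamma_d(K) \ge 1/2$ is used via a Borell-type inequality to guarantee that $K$ contains a ``thick core'' on which the Gaussian comparison can be integrated, and without this structural input, subgaussian tails alone would only give one-dimensional marginal control rather than inclusion in the body.
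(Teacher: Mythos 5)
This theorem is not proved in the paper at all: it is imported as a black box, with the existential statement due to Banaszczyk~\cite{Banaszczyk-RSA1998} and the algorithmic version due to the Gram--Schmidt walk of~\cite{GramSchmidtWalk-BansalDGL-STOC2018}. Your proposal is an outline of exactly that second proof, so you have identified the right strategy; the problem is that the two steps carrying essentially all of the difficulty are asserted rather than argued. The uniform bound $\int_0^\infty \langle \theta, Vu_t\rangle^2\,dt \le C\|\theta\|_2^2$ is the technical core of~\cite{GramSchmidtWalk-BansalDGL-STOC2018}: orthogonality of $Vu_t$ to the images of the other alive coordinates is necessary but nowhere near sufficient, and the actual argument depends on a pivot rule in which a pivot is retained until it freezes, together with a charging scheme that attributes the quadratic variation accumulated while $p$ is the pivot to mutually orthogonal directions indexed by the frozen coordinates. ``A careful accounting shows'' is precisely where that proof lives, and without specifying the pivot rule and the charging argument the claimed subgaussianity is unproven.

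The second, more serious gap is the final step. Lemma~\ref{lem:ComparisonGaussians} compares two \emph{Gaussian} measures with ordered covariance matrices; $Y = VX_\infty$ is a discrete, non-Gaussian random vector, and convolving it with a vanishingly small Gaussian does not make it Gaussian, so that lemma cannot be invoked to convert $O(1)$-subgaussianity into $\Pr[Y \in C'K] > 0$. What is actually needed is a separate lemma (due to Banaszczyk and reproved in~\cite{GramSchmidtWalk-BansalDGL-STOC2018}): any mean-zero $O(1)$-subgaussian random vector lies in $CK$ with probability at least a universal constant whenever $K$ is convex with $\gamma_d(K) \ge \frac{1}{2}$. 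Its proof genuinely uses the convexity of $K$ (for instance through the support function $h_K$ and a union over a structured family of halfspaces), not merely tail bounds. You correctly diagnose that ``subgaussian tails alone would only give one-dimensional marginal control rather than inclusion in the body,'' but the vague appeal to ``a Borell-type inequality'' and a ``thick core'' leaves exactly that gap open. To make the write-up complete you should either state and prove this lemma or cite it explicitly as a black box alongside the walk itself.
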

For many decades, the \emph{Kadison-Singer problem} was an open question in operator theory. It was 
finally resolved in 2015:
\begin{theorem}[Marcus, Spielman, Srivastava~\cite{KadisonSingerProblem-MSS-AnnalsOfMath2015}] \label{thm:MSS}
  Let $v_1,\ldots,v_m \in \setR^n$ so that $\sum_{i=1}^m v_iv_i^\top = I_d$ and let $\varepsilon>0$ so that $\|v_i\|_2^2 \leq \varepsilon$ for all $i \in [m]$.
  Then there is a partition $[m] = S_1 \dot{\cup} S_2$ so that for both $j \in \{ 1,2\}$ one has 
  \[
    \Big\|\sum_{i \in S_j} v_iv_i^\top -\frac{1}{2}I_d\Big\|_{\mathrm{op}} \leq 3\sqrt{\varepsilon} 
  \]
\end{theorem}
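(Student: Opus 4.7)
The plan is to follow the strategy of Marcus, Spielman, and Srivastava, which reduces the problem to bounding the largest root of a single deterministic polynomial via the method of interlacing families. First I would reformulate the statement in terms of signings: setting $x_i = +1$ for $i \in S_1$ and $x_i = -1$ for $i \in S_2$, one has $\sum_{i \in S_j} v_iv_i^{\top} - \tfrac{1}{2}I_d = \pm \tfrac{1}{2}\sum_{i=1}^m x_i v_iv_i^{\top}$, so it suffices to exhibit $x \in \{-1,+1\}^m$ with $\bigl\|\sum_i x_i v_iv_i^{\top}\bigr\|_{\mathrm{op}} \leq 6\sqrt{\varepsilon}$. Here we should note $d$ and $n$ appear interchangeably in the hypothesis of the statement; we work in $\setR^d$.

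For each signing $x$ let $\chi_x(z) = \det\!\bigl(zI_d - \sum_i x_i v_iv_i^{\top}\bigr)$, so that $\lambda_{\max}(\chi_x)$ equals the operator norm we wish to control (by symmetry $x \mapsto -x$). Consider the expected polynomial $\mu(z) := \E_x[\chi_x(z)]$, where $x$ is uniform on $\{-1,+1\}^m$. The core of the argument is to establish two things: first, that the family $\{\chi_x\}_x$, together with all its conditional averages obtained by fixing a prefix of signs, forms an \emph{interlacing family}; and second, that $\lambda_{\max}(\mu) \leq 6\sqrt{\varepsilon}$. The interlacing property implies (by a short pigeonhole lemma on root locations) the existence of some leaf $x$ with $\lambda_{\max}(\chi_x) \leq \lambda_{\max}(\mu)$, which is what we need.

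To set up the interlacing I would pass to the multivariate generating polynomial $p(z, y_1, \ldots, y_m) := \det\!\bigl(zI_d + \sum_i y_i v_iv_i^{\top}\bigr)$, which is real stable in all variables by the classical result that determinants of positive linear combinations of PSD matrices are real stable. The polynomial $\mu$ (and each conditional average) is obtained from $p$ by applying the operators $\tfrac{1}{2}\bigl(\mathrm{op}_{y_i \to +1} + \mathrm{op}_{y_i \to -1}\bigr)$. Using the Borcea--Brändén characterization of stability-preserving operators, one verifies that each such averaging step preserves real stability in $z$, hence univariate real-rootedness after specialization. A short argument then shows that polynomials of the same degree with positive leading coefficients and a common real-rooted "parent" admit a common interlacer, which is precisely what the interlacing-family framework requires.

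The main obstacle — and the true technical heart — is the bound $\lambda_{\max}(\mu) \leq 6\sqrt{\varepsilon}$, which requires the multivariate barrier method. Define barrier potentials $\Phi_j(z, y) := \partial_{y_j} \log p$ and track how they evolve under each sign-averaging operator composed with a small shift $z \mapsto z + \delta$. The key invariants are $\sum_i v_iv_i^{\top} = I_d$ (which controls the total "mass" of the potentials through the identity $\sum_i \partial_{y_i} p|_{y=0} = z\,\partial_z p|_{y=0} / z \cdot \mathrm{tr}(\cdots)$-type relations) and $\|v_i\|_2^2 \leq \varepsilon$ (which bounds the per-step increment). Choosing the shift $\delta = \Theta(\sqrt{\varepsilon})$ and showing inductively that the potentials stay below a fixed threshold yields that $\mu$ has no roots above $O(\sqrt{\varepsilon})$. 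Carrying out this barrier analysis cleanly with the correct constants is the entire content of \cite{KadisonSingerProblem-MSS-AnnalsOfMath2015}, and I would refer to that paper for the detailed potential-function bookkeeping rather than reproducing it here.
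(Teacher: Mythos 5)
This statement is imported by the paper as a black box from Marcus--Spielman--Srivastava and is not proved in the text; your outline correctly reproduces the structure of their argument (the reduction $\sum_{i\in S_j} v_iv_i^{\top}-\tfrac12 I_d=\pm\tfrac12\sum_i x_iv_iv_i^{\top}$, interlacing families via real-stable determinantal polynomials, and the multivariate barrier bound on the expected characteristic polynomial), and like the paper you ultimately defer the quantitative barrier analysis to the same citation. So your approach is essentially the paper's: cite MSS; the only check worth recording is that their bound of $\tfrac{(1+\sqrt{2\varepsilon})^2}{2}$ per part gives deviation $\sqrt{2\varepsilon}+\varepsilon\le 3\sqrt{\varepsilon}$ since $\varepsilon\le 1$ here.
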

In the definition of $\textrm{vb}(K,Q)$, there is no upper bound on the number of vectors to be balanced.
But it is well-known that up to a constant factor, the worst-case is attained for $d$ many vectors. Let 
\[
 \textrm{vb}_n(K,Q) := \inf\Big\{ r \geq 0 \mid \forall u_1,\ldots,u_n \in K : \exists x \in \{ -1,1\}^n : \sum_{j=1}^n x_jv_j \in rQ\Big\} 
\]
be the vector balancing variant with $n$ vectors, so that $\vb(K,Q) := \sup_{n \in \setN} \vb_n (K,Q).$
\begin{theorem}[\cite{LovaszSpencerVesztergombi-1986}] \label{thm:reduction_nd}
For any symmetric convex $K,Q \subseteq \setR^d$, $\mathrm{vb}(K,Q) \leq 2 \cdot \mathrm{vb}_{d}(K,Q)$.
\end{theorem}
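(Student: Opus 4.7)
The plan is to use the classical Lov\'asz--Spencer--Vesztergombi reduction via a polytope extreme-point / partial coloring argument. Abbreviate $r := \mathrm{vb}_d(K,Q)$. The base case $n \le d$ is immediate, so assume $n > d$.

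Given $u_1,\ldots,u_n \in K$, consider the bounded convex polytope
\[
P \;:=\; \Big\{y \in [-1,1]^n \,:\, \textstyle\sum_{i=1}^n y_i u_i \;=\; 0\Big\},
\]
which is nonempty since $0 \in P$. At any vertex $y^* \in P$ the number of linearly independent tight constraints must equal $n$; of these, at most $d$ can come from the equality system $\sum_i y_i u_i = 0$, so at least $n - d$ of the box constraints $y_i = \pm 1$ are active. Hence the set of fractional coordinates $T := \{i : y^*_i \in (-1,1)\}$ satisfies $|T| \le d$.

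I then invoke the $\mathrm{vb}_d$ guarantee on the at most $d$ vectors $\{u_i\}_{i \in T} \subseteq K$ to obtain signs $\sigma \in \{-1,1\}^T$ with $\|\sum_{i \in T} \sigma_i u_i\|_Q \le r$. Setting $\epsilon_i := y^*_i$ for $i \notin T$ and $\epsilon_i := \sigma_i$ for $i \in T$, and using $\sum_i y^*_i u_i = 0$ (to replace $\sum_{i \notin T} y^*_i u_i$ by $-\sum_{i \in T} y^*_i u_i$), I obtain
\[
\textstyle\sum_{i=1}^n \epsilon_i u_i \;=\; \sum_{i \in T}(\sigma_i - y^*_i)\,u_i \;=\; \sum_{i \in T} \sigma_i u_i \;-\; \sum_{i \in T} y^*_i u_i.
\]
The first summand has $Q$-norm $\le r$ by the choice of $\sigma$, so the bound $2r$ hinges on controlling the second summand by $r$ as well.

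The main obstacle --- and precisely the step that forces the factor of $2$ --- is controlling $\|\sum_{i \in T} y^*_i u_i\|_Q$: a priori this is just a generic interior point of the zonotope $\sum_{i \in T}[-1,1]\,u_i$ with no obvious $Q$-norm bound. Here I would instead select $y^*$ as a vertex of the \emph{refined} polytope $P' := \{y \in [-1,1]^n : \sum_i y_i u_i \in rQ\}$: the ball-type constraint contributes at most $d$ tight equations (coming from the supporting hyperplane of $rQ$ at $\sum_i y^*_i u_i$), so the cardinality bound $|T| \le d$ persists, and simultaneously $\sum_i y^*_i u_i \in rQ$ provides the extra $Q$-norm control needed to close a triangle-inequality bound of $\|\sum_i \epsilon_i u_i\|_Q \le 2r$. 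The entire reduction is constructive, using linear/convex programming to produce the vertex $y^*$ and the $\mathrm{vb}_d$ oracle on the fractional block, hence yields a polynomial-time algorithm alongside the existential statement.
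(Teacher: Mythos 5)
There is a genuine gap, and you have correctly located where it sits but your proposed repair does not close it. The vertex argument on $P$ is fine and reduces everything to the following rounding problem: given the fractional block $T$ with $|T|\le d$ and $y^*|_T\in(-1,1)^T$, find signs $\varepsilon\in\{-1,1\}^T$ with $\|\sum_{i\in T}(\varepsilon_i-y^*_i)u_i\|_Q\le 2r$. A single application of the $\mathrm{vb}_d$ oracle cannot do this, because it only controls $\|\sum_{i\in T}\sigma_i u_i\|_Q$ and leaves the term $\sum_{i\in T}y^*_iu_i$ completely unbounded (for $u_i\in K$ this partial sum can have $Q$-norm as large as $|T|\cdot\max_i\|u_i\|_Q$, which is not $O(r)$ in general). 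Switching to the body $P':=\{y\in[-1,1]^n:\sum_iy_iu_i\in rQ\}$ does not help: an extreme point $y^*$ of $P'$ still has $|T|\le d$ (by the kernel argument, not by counting supporting hyperplanes, since $P'$ need not be polyhedral), but the information $\sum_{i=1}^ny^*_iu_i\in rQ$ constrains only the \emph{total} sum, not the partial sum $\sum_{i\in T}y^*_iu_i$. Writing $\sum_i\varepsilon_iu_i=\big(\sum_iy^*_iu_i\big)+\sum_{i\in T}(\sigma_i-y^*_i)u_i$, the first and the $\sigma$-part are each in $rQ$, but $\sum_{i\in T}y^*_iu_i$ remains uncontrolled, so the triangle inequality never closes.

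The missing ingredient is the Lov\'asz--Spencer--Vesztergombi rounding of the fractional block, i.e.\ the bound $\mathrm{lindisc}\le 2\,\mathrm{herdisc}$ applied to $\{u_i\}_{i\in T}$ (every subset of which has at most $d$ elements, hence hereditary discrepancy at most $r$ with respect to $\|\cdot\|_Q$). Concretely: approximate $y^*|_T$ by a vector whose entries are dyadic rationals with $k$ bits; for $j=k,k-1,\ldots,0$ let $S_j\subseteq T$ be the coordinates whose current value is an odd multiple of $2^{-j}$, apply the $\mathrm{vb}_{|S_j|}\le\mathrm{vb}_d$ guarantee to $\{u_i\}_{i\in S_j}$ to get signs $\chi$ with $\|\sum_{i\in S_j}\chi_iu_i\|_Q\le r$, and update those coordinates by $\chi_i2^{-j}$; each coordinate stays in $[-1,1]$ and after all rounds lies in $\{-1,1\}$. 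The accumulated movement of $\sum_{i\in T}y_iu_i$ has $Q$-norm at most $r(2^{-k}+\cdots+2^{-1}+1)<2r$, which is exactly where the factor $2$ in the theorem comes from; a limiting argument handles non-dyadic $y^*$. This step is also constructive, so the algorithmic claim survives.
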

The reduction underlying the inequality is algorithmic as well.
\paragraph{Zonotopes.} A substantial amount of work in the literature has been done on the question of how one can sparsify an arbitrary zonotope with another zonotope that has fewer segments, while losing only a constant factor approximation. The first bound of $O(d^2)$~\cite{Schechtman1987} was improved to $O(d \log^3 d)$~\cite{BLM89}. We highlight the current best known bound: 
\begin{theorem}[Talagrand~\cite{EmbeddingL1-Talagrand-PAMS1990}] \label{thm:sparsify}
For any zonotope $K \subseteq \setR^d$ and $0<\varepsilon \leq \frac{1}{2}$, there is a zonotope $Q$ with at most $O(\frac{d}{\varepsilon^2} \log d)$ segments so that $Q \subseteq K \subseteq (1+\varepsilon)Q$.
\end{theorem}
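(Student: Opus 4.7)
The plan is to prove Theorem~\ref{thm:sparsify} via importance sampling combined with a generic-chaining bound on the induced empirical process. Write $K = \sum_{i=1}^m [-v_i, v_i]$ as a Minkowski sum of segments. Since the claim is invariant under invertible linear maps of $\setR^d$, by composing with $(\sum_i v_iv_i^\top)^{-1/2}$ I may assume $\sum_{i=1}^m v_iv_i^\top = I_d$, the isotropic or John position for zonotopes. Then $\sum_i \|v_i\|_2^2 = d$, so $p_i := \|v_i\|_2^2/d$ defines a probability distribution on $[m]$, and the estimate $h_K(x) = \sum_i |\langle v_i, x\rangle| \geq (\sum_i \langle v_i, x\rangle^2)^{1/2} = \|x\|_2$ shows that $K \supseteq B_2^d$, equivalently $K^\circ \subseteq B_2^d$.

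Next I would draw $N := C\varepsilon^{-2} d\log d$ independent indices $i_1,\ldots,i_N \sim p$ and set
\[
  Q := \frac{1}{N}\sum_{k=1}^N \frac{1}{p_{i_k}}\,[-v_{i_k}, v_{i_k}],
\]
so that $h_Q(x) = \tfrac{1}{N}\sum_k \tfrac{1}{p_{i_k}}|\langle v_{i_k},x\rangle|$ is an unbiased estimator of $h_K(x)$. The desired inclusion $Q \subseteq K \subseteq (1+\varepsilon)Q$ (after a harmless rescaling of $Q$ by $1+O(\varepsilon)$) reduces to showing, with probability at least $1/2$,
\[
  \sup_{x \in K^\circ} |h_Q(x) - h_K(x)| \leq \varepsilon.
\]
For a fixed $x \in K^\circ \subseteq B_2^d$, each summand has mean at most $1/N$, variance at most $\tfrac{1}{N^2}\sum_i \tfrac{|\langle v_i,x\rangle|^2}{p_i} = \tfrac{d}{N^2}\|x\|_2^2 \leq \tfrac{d}{N^2}$, and magnitude at most $\tfrac{\sqrt{d}}{N}$, so Bernstein's inequality yields pointwise tail $\exp(-c\varepsilon^2 N/d)$.

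To promote this pointwise estimate to a uniform one over $K^\circ$, I would invoke Talagrand's generic chaining with respect to the mixed pair of metrics $d_2(x,y) = \|x-y\|_2$ (governing the sub-Gaussian regime of Bernstein) and $d_\infty(x,y) = \max_i |\langle v_i, x-y\rangle|/\|v_i\|_2$ (governing the sub-exponential tail). The resulting deviation is bounded by $\tfrac{1}{\sqrt{N}}\gamma_2(K^\circ,d_2) + \tfrac{1}{N}\gamma_1(K^\circ,d_\infty)$. The central estimate is $\gamma_2(K^\circ,d_2) \lesssim \sqrt{d}$, which by the majorizing measures theorem equals up to constants the Gaussian mean-width $\E_{g \sim N(0,I_d)}\|g\|_K$; since $K^\circ \subseteq B_2^d$ we have $\|g\|_K \leq \|g\|_2$, hence $\E \|g\|_K \leq \sqrt{d}$. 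A parallel Sudakov-type estimate for $\gamma_1(K^\circ,d_\infty)$ contributes the extra $\sqrt{\log d}$ that is absorbed into $N = Cd\log d/\varepsilon^2$.

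The main obstacle is precisely this chaining step. A crude $\varepsilon$-net union bound on $K^\circ$ (whose log-covering number is $\Theta(d\log(1/\varepsilon))$) combined with pointwise Bernstein gives only $N = O(d^2/\varepsilon^2)$; pure Dudley integration saves one logarithm but loses others, which is why \cite{BLM89} obtained only $O(d\log^3 d)$. Squeezing the sparsification down to $O(d\log d)$ requires the full two-term Bernstein chaining above, together with the sharp bound $\E \|g\|_K \lesssim \sqrt{d}$ on the Gaussian mean-width of the polar in isotropic position; these are the nontrivial technical ingredients and the reason Talagrand's argument relies on the majorizing-measures machinery rather than on a classical entropy integral. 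Once the uniform deviation bound is established, a rescaling of $Q$ by $(1-\varepsilon)^{-1} = 1 + O(\varepsilon)$ yields the claim.
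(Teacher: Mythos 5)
First, note that the paper does not prove Theorem~\ref{thm:sparsify}: it is quoted from Talagrand, and the authors point to Cohen and Peng~\cite{LewisWeights} for an elementary exposition of the $O(d\log d)$ bound. So your attempt has to stand on its own, and while its overall architecture (importance sampling of segments plus a Bernstein-type chaining bound for the process $x\mapsto h_Q(x)-h_K(x)$ over $K^{\circ}$) is the right shape, the choice of sampling distribution is wrong, and this is fatal rather than cosmetic. You sample proportionally to the $\ell_2$ leverage scores $p_i=\|v_i\|_2^2/d$ in isotropic position. Your claimed variance identity $\sum_i |\langle v_i,x\rangle|^2/p_i = d\|x\|_2^2$ is false: the left-hand side equals $d\sum_i \langle v_i,x\rangle^2/\|v_i\|_2^2$, which is not $d\|x\|_2^2$ unless the normalized directions $v_i/\|v_i\|_2$ themselves form a tight frame. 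Likewise the magnitude bound $|\langle v_i,x\rangle|/p_i\le\sqrt{d}$ fails whenever some $\|v_i\|_2$ is small, since $|\langle v_i,x\rangle|/p_i$ can be as large as $d\|x\|_2/\|v_i\|_2$. A one-dimensional counterexample: take $v_1=\sqrt{1-(m-1)/m^2}$ and $v_2=\cdots=v_m=1/m$, so that $\sum_i v_i^2=1$ and $h_K(x)=|x|\sum_i|v_i|\approx 2|x|$, yet a single importance sample of $|v_I x|/p_I$ has second moment $\sum_i v_i^2\cdot(|v_i|/v_i^2)^2\cdot x^2=m\,x^2$ against a mean of about $2|x|$; the relative variance per sample is $\Theta(m)$, so your estimator needs $N=\Omega(m/\varepsilon^2)$ samples, destroying the claimed bound already for $d=1$.

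The repair is to sample with the $\ell_1$ Lewis weights $\overline{w}_i$ (in $d=1$ these are proportional to $|v_i|$, not to $v_i^2$), for which one has the uniform sensitivity bound $|\langle v_i,x\rangle|\lesssim \overline{w}_i\cdot h_K(x)$; this is precisely what makes the Bernstein variance and magnitude terms scale correctly, and it is the heart of the Cohen--Peng argument. (These same weights appear in Appendix~\ref{sec:NormalizingZonotopes} of the paper, used there for the different purpose of normalizing a zonotope.) Separately, your chaining step is only asserted, not carried out --- and that is where the difference between $\log d$ and $\log^3 d$ is actually won --- but the sampling distribution is the error that must be fixed before any concentration argument can begin.
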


We refer to the approach of Cohen and Peng~\cite{LewisWeights} for an elementary exposition of the $O(d \log d)$ bound.

Finally, we justify why it suffices to consider normalized zonotopes:
\begin{lemma} \label{lem:normalized}
For any full-dimensional zonotope $K = A^\top B^m_\infty \subseteq \setR^d$, there is a normalized zonotope $\tilde{K}$ and an invertible linear map $T$ so that $\frac{4}{5} \tilde{K} \subseteq T(K) \subseteq \tilde{K}$. In particular, $\frac{4}{5}\vb(\tilde{K},\tilde{K}) \leq \vb(K,K) \leq \frac{5}{4}\vb(\tilde{K},\tilde{K})$.
\end{lemma}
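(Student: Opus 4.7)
The plan is to first sparsify $K$ via Theorem~\ref{thm:sparsify}, then re-represent the sparsification with a balanced-leverage-score generator matrix by splitting rows, and finally apply a linear map that brings it into normalized form.

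First I would apply Theorem~\ref{thm:sparsify} with $\varepsilon = 1/4$ to obtain a zonotope $Q = B^\top B_\infty^{m'}$ with $B \in \setR^{m' \times d}$ and $m' = O(d \log d)$ segments such that $Q \subseteq K \subseteq \tfrac{5}{4} Q$. Then I would post-process by splitting: whenever a generator $B_i$ has leverage score $\tau_i := B_i^\top (B^\top B)^{-1} B_i$ exceeding $4d/m'$, replace it by $k$ equal copies $B_i/k$; this preserves the zonotope $Q$ but redistributes that row's leverage contribution across the new copies. A standard $\ell_\infty$ Lewis-weights fixed-point argument shows that after such splitting, the resulting matrix $B'$ with $M = O(d \log d)$ rows satisfies $\tau_i(B') \leq 4d/M$ uniformly, and $Q = (B')^\top B_\infty^M$ still holds.

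Next I would define $T := \tfrac{4}{5} \sqrt{d/M}\,(B'^\top B')^{-1/2}$ and $\tilde A := B'(B'^\top B')^{-1/2}$. A short calculation verifies $\tilde A^\top \tilde A = I_d$ (orthonormal columns) and $\|\tilde A_i\|_2 = \sqrt{\tau_i(B')} \leq 2\sqrt{d/M}$ (bounded row norms), so $\tilde A$ is approximately regular and $\tilde K := \sqrt{d/M}\,\tilde A^\top B_\infty^M$ is a normalized zonotope. Using the identity $(B'^\top B')^{-1/2} B'^\top = \tilde A^\top$, one gets $T Q = \tfrac{4}{5}\tilde K$. Applying $T$ to the sandwich $Q \subseteq K \subseteq \tfrac{5}{4}Q$ then yields $\tfrac{4}{5}\tilde K = TQ \subseteq TK \subseteq \tfrac{5}{4}TQ = \tilde K$, which is the desired containment.

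The ``in particular'' clause follows by transferring colorings through the sandwich: given $v_i \in K$, the vectors $Tv_i$ lie in $\tilde K$, and any coloring $x$ with $\|\sum x_i Tv_i\|_{\tilde K} \leq r$ gives $\sum x_i v_i \in r T^{-1} \tilde K \subseteq \tfrac{5}{4} r\, K$; the reverse direction is analogous. Since $\vb$ is invariant under linear isomorphism, this transfer yields $\tfrac{4}{5}\vb(\tilde K, \tilde K) \leq \vb(K,K) \leq \tfrac{5}{4}\vb(\tilde K, \tilde K)$. The main obstacle will be making the leverage-score balancing step rigorous---ensuring that the splits exist, are efficient to compute, and do not inflate $M$ beyond $O(d \log d)$; this is a standard consequence of $\ell_\infty$ Lewis weights (cf.~\cite{LewisWeights}), but committing to this technical ingredient is the key point of the proof.
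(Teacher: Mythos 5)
Your approach is fundamentally the same as the paper's: both proofs hinge on Lewis weights to split the generators into pieces of uniformly small leverage score, followed by a whitening map $(\cdot)^{-1/2}$. Two remarks on where you diverge. First, the sparsification step is unnecessary: the definition of a normalized zonotope places no restriction on the number of segments $m$, so one can run the splitting argument directly on $A$; this also keeps the lemma independent of Talagrand's theorem. Second, your rounding is organized differently from the paper's. You split $B_i$ into $k$ exact copies $B_i/k$, which preserves the zonotope on the nose, and you recover orthonormal columns afterwards by setting $\tilde A := B'(B'^\top B')^{-1/2}$; your loss of $4/5$ then comes entirely from the sparsification. The paper instead replaces $B_i$ by $\lfloor w_i\rfloor$ copies of $w_i^{-1}B_i$ plus one row $\{w_i\}^{1/2}w_i^{-1}B_i$, which makes the Gram matrix \emph{exactly} $I_d$ but lengthens each segment by a factor in $[1,5/4]$ --- that is where its $4/5$ comes from. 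Both roundings are legitimate.

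The one genuine weakness is that you defer the entire splitting step to ``a standard fixed-point argument,'' and the way you phrase it --- iteratively splitting whichever row currently has leverage score above $4d/m'$ --- is not obviously well-founded: splitting row $i$ into copies $B_i/k$ shrinks the Gram matrix $\sum_j k_j^{-1}B_jB_j^\top$, which \emph{increases} the leverage scores of all other rows, so termination of such a greedy loop is exactly what needs proof. The correct (and non-iterative) instantiation is: take the $\ell_1$ Lewis weights $\overline w$ of $B$ (note they are the $\ell_1$ weights, satisfying $\overline w_i^{-2}B_i^\top(B^\top\overline W^{-1}B)^{-1}B_i=1$ with $\sum_i\overline w_i = d$, not the $\ell_\infty$ ones), rescale $w := D\overline w$ so that $w_i\ge 1$, and set $k_i:=\lceil w_i\rceil$. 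Then $w_i\le k_i\le 2w_i$ gives $\tfrac12\,B^\top W^{-1}B \preceq B'^\top B' \preceq B^\top W^{-1}B$, whence each copy has leverage score $k_i^{-2}B_i^\top(B'^\top B')^{-1}B_i \le 2w_i^{-2}B_i^\top(B^\top W^{-1}B)^{-1}B_i = 2/D \le 4d/M$ since $M=\sum_i k_i\le 2Dd$. With this the bound $\|\tilde A_{i'}\|_2\le 2\sqrt{d/M}$ you assert does hold and the rest of your argument, including the transfer of colorings for the ``in particular'' clause, goes through. So the proposal is correct in outline, but the step you flagged as the key point is precisely the content of the lemma, and as written it is asserted rather than proved.
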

We show the argument in Appendix~\ref{sec:NormalizingZonotopes}.

\begin{lemma} \label{lem:contained_ball}
Any normalized zonotope $K \subseteq \setR^d$ satisfies $K \subseteq \sqrt{d} B_2^d$.
\end{lemma}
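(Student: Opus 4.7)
The plan is to unfold the definition of a normalized zonotope and use only orthonormality of the columns of $A$; the row-norm bound (condition (ii)) is not needed for this containment.

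Let $K = \sqrt{d/m}\, A^\top B_\infty^m$ where $A \in \setR^{m \times d}$ is approximately regular. Pick an arbitrary $x \in K$, so that $x = \sqrt{d/m}\, A^\top y$ for some $y \in [-1,1]^m$. Then
\[
\|x\|_2^2 = \frac{d}{m}\, y^\top A A^\top y.
\]
Since the columns of $A$ are orthonormal (condition (i) in the definition of approximately regular), $A^\top A = I_d$, so $AA^\top \in \setR^{m \times m}$ is the orthogonal projection onto the $d$-dimensional column space of $A$. In particular $AA^\top \preceq I_m$, hence $y^\top A A^\top y \leq \|y\|_2^2$.

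Using $y \in [-1,1]^m$ gives $\|y\|_2^2 \leq m$, so
\[
\|x\|_2^2 \leq \frac{d}{m}\cdot m = d,
\]
i.e.\ $x \in \sqrt{d}\, B_2^d$. Since $x \in K$ was arbitrary, $K \subseteq \sqrt{d}\, B_2^d$. There is no real obstacle here; the only subtlety to point out is that the spectral bound $AA^\top \preceq I_m$ (not $AA^\top = I_m$, which fails when $m > d$) is the right way to control $\|A^\top y\|_2$ from the orthonormality of the columns.
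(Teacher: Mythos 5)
Your proof is correct and follows essentially the same route as the paper's: both arguments use orthonormality of the columns of $A$ to get $\|A^\top y\|_2 \leq \|y\|_2$ (you via $AA^\top \preceq I_m$, the paper via $\|A^\top\|_{\mathrm{op}} = 1$, which are the same fact) and then bound $\|y\|_2 \leq \sqrt{m}$ for $y \in B_\infty^m$. Nothing to fix.
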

\begin{proof}
  We write $K = \sqrt{\frac{d}{m}} A^\top B_{\infty}^{m}$ where $A \in \setR^{m \times d}$. Note that $A^\top A = I_d$
  by orthonormality of the columns of $A$ and so $\|A\|_{\textrm{op}} = \|A^\top A\|_{\textrm{op}}^{1/2} = 1$.
  By definition, for any $x \in K$ there is a $y \in B_{\infty}^{m}$ with $x = \sqrt{\frac{d}{m}}A^\top y$, so that \[\|x\|_2 = \sqrt{\frac{d}{m}} \|A^\top y\|_2 \leq \sqrt{\frac{d}{m}} \|A^\top\|_{\textrm{op}} \cdot \|y\|_2 \leq \sqrt{d}.\qedhere \]
\end{proof}

\section{Sections of normalized zonotopes}
In this section we prove Theorem~\ref{thm:SlicesOfNormZonotopes}, showing that all sections of zonotopes
are large. To be be more precise, we prove the following more general measure lower bound:
\begin{theorem}
  For any normalized zonotope $K \subseteq \setR^d$, any subspace $H \subseteq \setR^d$ with $n := \dim(H)$
 and any $t \geq 1$, one has $\gamma_H(t \cdot C \cdot K \cap H) \geq \exp(-e^{-t^2/2} \cdot n)$ where $C>0$ is a universal constant.
\end{theorem}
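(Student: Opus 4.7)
The plan is to reduce to the case of a normalized zonotope with $\Theta(d)$ segments via iterative Marcus-Spielman-Srivastava (MSS) decomposition, and then handle that base case with a \v{S}id\'ak-style strip argument.

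\textbf{Decomposition.} Write $K = \sqrt{d/m}A^\top B_\infty^m$ with $\sum_i A_iA_i^\top = I_d$ and $\|A_i\|_2^2 \le 4d/m$. I would apply Theorem~\ref{thm:MSS} iteratively $\lceil\log_2(m/d)\rceil$ times, rescaling between iterations (e.g.\ via the base change $v\mapsto M^{-1/2}v$ to restore the identity condition). Because the squared norms double at each step, the MSS errors $3\sqrt{2^k\cdot d/m}$ form a geometric series bounded by $O(1)$. This yields a partition $[m]=S_1\cup\cdots\cup S_N$ with $N=\Theta(m/d)$, $|S_j|=\Theta(d)$, and $\|\sum_{i\in S_j}A_iA_i^\top - (d/m)I_d\|_{\mathrm{op}}\lesssim d/m$, producing a Minkowski decomposition $K = K_1+\cdots+K_N$ whose rescaled pieces $NK_j = \tilde A_{(j)}^\top B_\infty^{|S_j|}$ (with $\tilde A_{(j)} := \sqrt{m/d}A_{(j)}$) satisfy $\tilde A_{(j)}^\top\tilde A_{(j)}\approx I_d$ and $\|\tilde A_i\|_2 \lesssim 1$.

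\textbf{Combining pieces.} Since $H$ is a subspace, $(tK_1+\cdots+tK_N)\cap H \supseteq (tK_1\cap H)+\cdots+(tK_N\cap H)$ and $N(tK_j\cap H) = tNK_j\cap H$. Pr\'ekopa-Leindler in the log-concave space $(H,\gamma_H)$ then gives
\[\gamma_H(tK\cap H) \;\ge\; \prod_{j=1}^N \gamma_H(tNK_j\cap H)^{1/N}.\]
For the base case on each $NK_j$, assuming exact $\tilde A_{(j)}^\top\tilde A_{(j)}=I_d$, the choice $y=\tilde A_{(j)}x$ produces the strip inclusion $NK_j \supseteq \{x:\|\tilde A_{(j)}x\|_\infty\le 1\}$, an intersection of $|S_j|$ strips. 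Applying Theorem~\ref{lem:SidakKhatri} and Lemma~\ref{lem:GaussianMeasureOfStrip} to the unit-norm rescaled normals $P_H\tilde A_i/2$ with threshold $t$ yields
\[\gamma_H\bigl(2t NK_j\cap H\bigr) \;\ge\; \prod_i \exp\!\Bigl(-\tfrac{1}{4}e^{-t^2/2}\|P_H\tilde A_i\|_2^2\Bigr) = \exp\!\Bigl(-\tfrac{1}{4}e^{-t^2/2}\tr(P_H\tilde A_{(j)}^\top\tilde A_{(j)}P_H)\Bigr) \;\ge\; \exp\bigl(-e^{-t^2/2}n\bigr),\]
since $\tr(P_H\tilde A_{(j)}^\top\tilde A_{(j)}P_H)\approx n$. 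Substituting into the displayed product over $j$ gives $\gamma_H(2tK\cap H)\ge \exp(-e^{-t^2/2}n)$, so $C = 2$ works in the exact case.

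\textbf{Main obstacle.} The principal difficulty is controlling the MSS approximation. Theorem~\ref{thm:MSS} demands $\sum v_iv_i^\top = I_d$ exactly, so at each iteration one must restore this condition via a base change and carefully track cumulative errors. A related subtlety is that once $M := \tilde A_{(j)}^\top\tilde A_{(j)} \ne I_d$ the clean strip inclusion is lost; one instead works with the exactly-orthonormal $\bar A = \tilde A_{(j)}M^{-1/2}$, writes $NK_j = M^{1/2}\bar A^\top B_\infty^{|S_j|}$, and runs the strip argument on the pushed-forward Gaussian with covariance $M^{-1/2}P_HM^{-1/2}$, whose trace is $\tr(P_HM^{-1})\lesssim n$ (and a covariance comparison via Lemma~\ref{lem:ComparisonGaussians} handles the change of inner product). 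All resulting constant losses are absorbable into the universal constant $C$, if necessary via a final application of Lemma~\ref{lem:ScalingLemma}.
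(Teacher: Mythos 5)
Your proposal follows essentially the same route as the paper's proof: an iterated Kadison--Singer (MSS) decomposition of the approximately regular matrix into $\Theta(m/d)$ blocks each satisfying $\sum_{i\in S_j}A_iA_i^\top\succeq \Omega(d/m)I_d$, log-concavity of $\gamma_H$ to multiply the blockwise bounds, and the \v{S}id\'ak--Khatri strip argument (with an $M^{-1/2}$ change of basis and a Gaussian covariance comparison to handle the inexact identity) for each block. The only cosmetic difference is that you assert a two-sided operator-norm bound and $|S_j|=\Theta(d)$ for every block, neither of which MSS guarantees for the cardinality nor is needed; the paper tracks only the one-sided eigenvalue lower bound (and uses Markov for the sizes), which is all the argument actually uses.
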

In the most basic form where $K = B_{\infty}^d$ is a cube and $t=1$, the statement is similar to
a result of Vaaler~\cite{Vaaler1979} who proved that $\Vol_H(K \cap H) \geq 2^{n}$ for any $n$-dimensional subspace $H \subseteq \setR^d$; though the geometry of a zonotope is more complex and the proof strategy is rather different.

\subsection{A first direct lower bound}

We begin with a simple estimate on the Gaussian measure of the section of a zonotope
where we drop the scalar of $\sqrt{\frac{d}{m}}$.
Hence this bound will be tight if the number of segments is close to $d$ but rather loose otherwise.
We denote $\Pi_H$ as the orthogonal projection into a subspace $H$.
\begin{lemma} \label{lem:MeasureOfSliceDirectProof}
  Let $K := A^{\top}B_{\infty}^m \subseteq \setR^d$ be a zonotope where $A \in \setR^{m \times d}$ is a matrix with orthonormal columns. 
  Then for any subspace $H \subseteq \setR^d$  with $n := \dim(H)$ and any $t \geq 1$ one has $\gamma_H(t \cdot K \cap H) \geq \exp(-e^{-t^2/2} \cdot n)$.
\end{lemma}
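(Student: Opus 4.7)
The plan is to exploit the fact that $A \in \setR^{m \times d}$ is an isometric embedding ($A^\top A = I_d$), so every $x \in \setR^d$ has a canonical preimage $Ax \in \setR^m$ under the map $y \mapsto A^\top y$. Given a Gaussian vector $G \sim N(0, I_H)$ on the subspace $H$, I would observe that $y := AG/t$ always satisfies $A^\top y = G/t$, hence $tA^\top y = G$. Therefore a sufficient condition for $G \in tK$ is $\|y\|_\infty \leq 1$, i.e.\ $|\langle A_i, G\rangle| \leq t$ for every row $A_i$ of $A$. This converts the section measure into a probability of a product of strip events:
\[
\gamma_H(tK \cap H) \;\geq\; \Pr_{G \sim N(0, I_H)}\!\left[\bigcap_{i=1}^m \bigl\{|\langle A_i, G\rangle| \leq t\bigr\}\right].
\]

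Next I would iterate the \v{S}id\'ak-Khatri inequality (Theorem~\ref{lem:SidakKhatri}): since each event $\{|\langle A_i, G\rangle| \leq t\}$ cuts out a strip in $H$ with normal vector $\Pi_H A_i$, and the intersection of strips is a symmetric convex body, we can peel off one strip at a time to bound the expression above by $\prod_{i=1}^m \Pr_G[|\langle A_i, G\rangle| \leq t]$. Each factor is then handled by Lemma~\ref{lem:GaussianMeasureOfStrip}, which requires $\|\Pi_H A_i\|_2 \leq 1$. This holds because $AA^\top \in \setR^{m \times m}$ is the orthogonal projection onto the column space of $A$ (a consequence of $A^\top A = I_d$), so its diagonal entries satisfy $(AA^\top)_{ii} = \|A_i\|_2^2 \leq 1$; projecting onto $H$ only shrinks this norm further.

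Combining the two estimates yields
\[
\gamma_H(tK \cap H) \;\geq\; \exp\!\Big(\!-e^{-t^2/2} \sum_{i=1}^m \|\Pi_H A_i\|_2^2\Big),
\]
and the sum collapses neatly via $\sum_{i=1}^m \|\Pi_H A_i\|_2^2 = \sum_i \tr(\Pi_H A_i A_i^\top) = \tr(\Pi_H A^\top A) = \tr(\Pi_H) = n$, producing the claimed bound $\exp(-e^{-t^2/2} \cdot n)$. I do not foresee any real obstacle: the only sleight of hand is spotting the canonical preimage $y = AG$; everything else is a routine combination of \v{S}id\'ak-Khatri with the strip estimate. The reason this direct bound is tight only when $m \approx d$ (as the paper flags) is that we made no use of the scaling factor $\sqrt{d/m}$ in the normalized zonotope definition, so the improvement for $m \gg d$ must come later from the Marcus-Spielman-Srivastava decomposition outlined after Theorem~\ref{thm:SlicesOfNormZonotopes}.
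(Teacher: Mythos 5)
Your proposal is correct and follows essentially the same route as the paper: your ``canonical preimage'' $y = AG$ is exactly the paper's Claim~I (which is derived from $\sum_{i=1}^m A_iA_i^{\top} = I_d$ and gives the same sufficient condition $|\langle A_i, G\rangle| \leq t$ for all $i$), and the remaining steps --- iterating \v{S}id\'ak--Khatri, invoking Lemma~\ref{lem:GaussianMeasureOfStrip} with $\|\Pi_H A_i\|_2 \leq \|A_i\|_2 \leq 1$, and collapsing $\sum_{i=1}^m \|\Pi_H A_i\|_2^2 = \tr(\Pi_H) = n$ --- match the paper's computation line for line.
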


\begin{proof}
  Let $U \in \setR^{d \times n}$ be a matrix with orthonormal columns $U^1, \dots, U^n$ spanning $H$. Then if
  we draw $y \sim N(0,I_n)$, $Uy$ is indeed a standard Gaussian in the subspace $H$.
  By assumption, $\sum_{i=1}^m A_iA_i^{\top} = I_d$, and this can be used to write any outcome of
  the random process as
  \begin{equation}
    Uy = \sum_{j=1}^n y_j I_dU^j 
    = \sum_{i=1}^m A_i \sum_{j=1}^n y_j\left<A_i,U^j\right>=\sum_{i=1}^m A_i\left<y,U^{\top}A_i\right>.  \label{eq:ExprForUy}
\end{equation}
Here one should think of $U^{\top}A_i \in \setR^n$ as the coordinates of $\Pi_H(A_i)$ in terms of the basis $U$ of $H$.
  From the expression in \eqref{eq:ExprForUy} we can draw the following conclusion: \\
  {\bf Claim I.} \emph{For any $y \in \setR^n$ and $s>0$ one has $(|\left<y,U^{\top}A_i\right>| \leq s \; \forall i \in [m]) \Rightarrow Uy \in sK$.} \\
  Then Claim I gives a simple sufficient (but in general not necessary) condition for $Uy$ to lie in the zonotope $K$.
  Next, we can see that
  \[
  \sum_{i=1}^m \|U^{\top}A_i\|_2^2 = \sum_{i=1}^m \textrm{Tr}\big[UU^{\top}A_iA_i^{\top}\big] = \textrm{Tr}[UU^{\top}] = n
  \] 
Then we can use Claim I and the inequality of \v{S}id\'ak-Khatri to lower bound the Gaussian measure by
\begin{eqnarray*}
  \gamma_H(t \cdot K \cap H) &=& \Pr_{y \sim N(0,I_n)}[Uy \in t \cdot K] \\ &\geq& \Pr_{y \sim N(0,I_n)}\big[|\left<U^{\top}A_i,y\right>| \leq t \; \; \forall i \in [m]\big] \\
                     &\stackrel{\textrm{Lem~}\ref{lem:SidakKhatri}}{\geq}&  \prod_{i=1}^m \Pr_{y \sim N(0,I_n)}\big[ |\left<U^{\top}A_i,y\right> | \leq t\big]  \\
                    \\ &\stackrel{\textrm{Lem~\ref{lem:GaussianMeasureOfStrip}}}{\geq}& \prod_{i=1}^m \exp\big(-e^{-t^2/2}\|U^{\top}A_i\|_2^2\big) \\ &=& \exp\Big(-e^{-t^2/2}\sum_{i=1}^m \|U^{\top}A_i\|_2^2\Big) = \exp\big(-e^{t^2/2}n\big)
\end{eqnarray*}
Here we have used that $\|U^{\top}A_i\|_2 \leq \|A_i\|_2 \leq 1$ which follows by the orthonormality of the columns of $A$.
\end{proof}

It is somewhat unfortunate that Claim I shown above requires that $\sum_{i=1}^m A_iA_i^{\top}$ is \emph{exactly} the identity and an approximation is not enough. But we can fix this by a rescaling argument:

\begin{lemma} \label{lem:MeasureOfSliceDirectProofII}
  Let $K = A^{\top}B_{\infty}^m \subseteq \setR^d$ be a zonotope where $A \in \setR^{m \times d}$ is a matrix so that $\sum_{i=1}^m A_iA_i^\top \succeq \alpha I_d$ for some $\alpha>0$.
  Then for any $n$-dimensional subspace $H \subseteq \setR^d$ and any $t \geq 1$ one has $\gamma_H(\frac{t}{\sqrt{\alpha}} \cdot K \cap H) \geq \exp\big(-e^{-t^2/2} \cdot n\big)$.
\end{lemma}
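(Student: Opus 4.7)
The plan is to reduce to Lemma~\ref{lem:MeasureOfSliceDirectProof} by straightening the columns of $A$ via an invertible linear transformation of $\setR^d$. Since $M := A^\top A = \sum_i A_i A_i^\top \succeq \alpha I_d$ is positive definite, I would define $\tilde A := A M^{-1/2}$, so that $\tilde A^\top \tilde A = M^{-1/2} M M^{-1/2} = I_d$ and the associated zonotope $\tilde K := \tilde A^\top B_\infty^m = M^{-1/2} K$ has orthonormal columns. Applying Lemma~\ref{lem:MeasureOfSliceDirectProof} to $\tilde K$ with the $n$-dimensional subspace $H' := M^{-1/2} H$ yields $\gamma_{H'}(t \tilde K \cap H') \geq \exp(-e^{-t^2/2} n)$ for all $t \geq 1$. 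Since $t \tilde K \cap H' = M^{-1/2}(tK \cap H)$, parameterizing $H'$ by an isometry $U' : \setR^n \to H'$ and substituting $h = M^{1/2} U' y$ (with $y \sim N(0,I_n)$) rewrites the left-hand side as $\Pr_{h \sim N(0,\, M^{1/2} \Pi_{H'} M^{1/2})}[h \in tK]$, where $h$ is a (possibly degenerate) Gaussian supported on $H$.

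The main technical step is the covariance comparison $M^{1/2} \Pi_{H'} M^{1/2} \succeq \alpha \Pi_H$. Let $V \in \setR^{d \times n}$ have orthonormal columns spanning $H$; then $M^{-1/2} V$ spans $H'$, and the standard formula $\Pi_{\mathrm{range}(B)} = B(B^\top B)^{-1} B^\top$ for orthogonal projection gives
\[ M^{1/2} \Pi_{H'} M^{1/2} \;=\; V\big(V^\top M^{-1} V\big)^{-1} V^\top. \]
Both this matrix and $\alpha \Pi_H = \alpha V V^\top$ vanish on $H^\perp$, so the inequality is equivalent to $(V^\top M^{-1} V)^{-1} \succeq \alpha I_n$, i.e., to $V^\top M^{-1} V \preceq \alpha^{-1} I_n$. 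This follows immediately from $M^{-1} \preceq \alpha^{-1} I_d$ (equivalent to $M \succeq \alpha I_d$) together with $V^\top V = I_n$.

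With the comparison in hand, Lemma~\ref{lem:ComparisonGaussians} gives $\Pr_{h \sim N(0,\, \alpha \Pi_H)}[h \in tK] \geq \Pr_{h \sim N(0,\, M^{1/2} \Pi_{H'} M^{1/2})}[h \in tK] \geq \exp(-e^{-t^2/2} n)$, and the rescaling $h \mapsto h/\sqrt{\alpha}$ identifies the left-hand side with $\gamma_H\big((t/\sqrt{\alpha}) K \cap H\big)$, finishing the argument. The main obstacle is the choice of the right auxiliary subspace: using $H' = M^{-1/2} H$ (rather than $H$ itself) is what makes the projection formula collapse to the clean expression $V(V^\top M^{-1} V)^{-1} V^\top$, which is directly comparable to $\alpha \Pi_H$. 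A naive direct adaptation of the proof of Lemma~\ref{lem:MeasureOfSliceDirectProof} would instead produce vectors $\sqrt{\alpha}\, U^\top M^{-1} A_i$ whose individual $\ell_2$ norms need not be bounded by $1$, so Lemma~\ref{lem:GaussianMeasureOfStrip} cannot be applied termwise; the linear change of coordinates bypasses this difficulty by absorbing the distortion into the Gaussian covariance.
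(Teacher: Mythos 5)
Your proof is correct and follows essentially the same route as the paper: rescale by $M^{-1/2}$ so that $\tilde A = AM^{-1/2}$ has orthonormal columns, apply Lemma~\ref{lem:MeasureOfSliceDirectProof} to $\tilde K = M^{-1/2}K$ and the transformed subspace, and transfer the bound back via Lemma~\ref{lem:ComparisonGaussians}. The only (cosmetic) difference is that the paper performs the covariance comparison in the transformed space, checking $\mathrm{Cov}(M^{-1/2}Uy) \preceq I_{\tilde H}$, whereas you push the Gaussian forward to the original space and verify $M^{1/2}\Pi_{H'}M^{1/2} \succeq \alpha \Pi_H$ via the projection formula; both are valid and rest on the same inequality $M \succeq \alpha I_d$.
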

\begin{proof}
  Scaling $K$ by $\frac{1}{\sqrt{\alpha}}$ is equivalent to scaling $\sum_{i=1}^m A_iA_i^\top$ by $\frac{1}{\alpha}$,
  hence we may assume that indeed $\alpha = 1$.
Abbreviate $M := \sum_{i=1}^m A_iA_i^\top \succeq I_d$ which is a symmetric positive definite matrix.
  Consider the matrix $\tilde{A} \in \setR^{m \times d}$ with rescaled rows
  $\tilde{A}_{i} := M^{-1/2}A_{i}$, so that $\sum_{i=1}^m \tilde{A}_i\tilde{A}_i^\top = I_d$.
  Let $\tilde{K} := \tilde{A}^\top B_{\infty}^{m} = M^{-1/2}(K)$ and $\tilde{H} := M^{-1/2}(H)$ be the rescaled zonotope
  and subspace.
  Let $U^{1},\ldots,U^n$ be an orthonormal basis of $H$. Then with $\tilde{U} = M^{-1/2}U$, 
   $\tilde{U}^1,\ldots,\tilde{U}^n$ will be the basis of $\tilde{H}$, but it will not be orthogonal in general.
  However, for $y \sim N(0,I_n)$ one has $\textrm{Cov}(\tilde{U}y) = \tilde{U}\tilde{U}^{\top} = M^{-1/2}UU^{\top}M^{-1/2} \preceq I_{\tilde{H}}$. Then
  \[
  \Pr_{y \sim N(0,I_d)}[Uy \in tK] = \Pr_{y \sim N(0,I_d)}[\tilde{U}y \in t \tilde{K}] \stackrel{\textrm{Lem~\ref{lem:ComparisonGaussians}}}{\geq} \Pr_{y \sim N(0,I_{\tilde{H}})}[y \in t \tilde{K}] \stackrel{\textrm{Lem~\ref{lem:MeasureOfSliceDirectProof}}}{\geq} \exp\big(-e^{-t^2/2}n\big). \qedhere
  \]
\end{proof}


\subsection{Decomposition of normalized zonotopes}

The next step in our proof strategy is to decompose the rows of an approximately regular matrix $A \in \setR^{m \times d}$
into $\Theta(\frac{m}{d})$ many blocks $J \subseteq [m]$ so that $\sum_{i \in J} A_iA_i^{\top} \succeq \Omega(\frac{d}{m}) \cdot I_d$.
For this purpose, we formulate a slight variant of Theorem~\ref{thm:MSS}.
\begin{lemma}\label{lem:MSS-only-LB}
  Let $v_1,\ldots,v_m \in \setR^d$ be vectors with $\sum_{i=1}^m v_iv_i^{\top} \succeq L\cdot I_d$ for some $L>0$ and let $\varepsilon := \max_{i=1,\ldots,m} \|v_i\|_2^2$.
  Then there is a partition $[m] = S_1 \dot{\cup} S_2$ so that
  \[
  \sum_{i \in S_j} v_iv_i^{\top} \succeq \Big(\frac{L}{2}-3\sqrt{L\varepsilon}\Big) I_d \quad \forall j \in \{ 1,2\}
\]
\end{lemma}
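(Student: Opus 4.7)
The plan is to reduce the statement to Theorem~\ref{thm:MSS} via the same kind of rescaling trick already used in Lemma~\ref{lem:MeasureOfSliceDirectProofII}. The only obstacle to applying Theorem~\ref{thm:MSS} directly is that its hypothesis requires $\sum_{i=1}^m v_iv_i^\top = I_d$ exactly, while here we merely have a lower bound $\sum_{i=1}^m v_iv_i^\top \succeq L\cdot I_d$ and no upper bound whatsoever. Note first that if $\tfrac{L}{2} - 3\sqrt{L\varepsilon} \leq 0$ the conclusion is vacuously true (any partition works), so we may assume $3\sqrt{\varepsilon/L} < \tfrac{1}{2}$; in particular $M := \sum_{i=1}^m v_iv_i^\top$ is positive definite and thus invertible.

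The next step is to define the rescaled vectors $w_i := M^{-1/2} v_i$ for $i \in [m]$. Then $\sum_{i=1}^m w_iw_i^\top = M^{-1/2} M M^{-1/2} = I_d$, and using $M \succeq L\cdot I_d$ (hence $M^{-1} \preceq L^{-1}I_d$) we obtain
\[
\|w_i\|_2^2 \;=\; v_i^\top M^{-1} v_i \;\leq\; L^{-1} \|v_i\|_2^2 \;\leq\; \varepsilon/L
\]
for every $i$. Thus $(w_i)_{i \in [m]}$ satisfies the hypothesis of Theorem~\ref{thm:MSS} with parameter $\varepsilon' := \varepsilon/L$, which yields a partition $[m] = S_1 \dot{\cup} S_2$ such that $\big\|\sum_{i \in S_j} w_iw_i^\top - \tfrac{1}{2} I_d\big\|_{\mathrm{op}} \leq 3\sqrt{\varepsilon/L}$ for $j=1,2$. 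In particular
\[
\sum_{i \in S_j} w_iw_i^\top \;\succeq\; \Big(\tfrac{1}{2} - 3\sqrt{\varepsilon/L}\Big) I_d.
\]

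To finish, I would conjugate by $M^{1/2}$ to return to the original vectors: since $X \succeq c\cdot I_d$ with $c \geq 0$ implies $M^{1/2} X M^{1/2} \succeq c M \succeq cL\cdot I_d$, we conclude that for $j=1,2$,
\[
\sum_{i \in S_j} v_iv_i^\top \;=\; M^{1/2}\bigg(\sum_{i \in S_j} w_iw_i^\top\bigg) M^{1/2} \;\succeq\; \Big(\tfrac{L}{2} - 3\sqrt{L\varepsilon}\Big) I_d,
\]
which is the desired bound. There is no real obstacle here; the essential point is the short calculation showing that after rescaling by $M^{-1/2}$ the per-vector norm bound only degrades from $\varepsilon$ to $\varepsilon/L$, which is exactly what produces the $3\sqrt{L\varepsilon}$ loss term (rather than, say, $3\sqrt{\varepsilon}$) in the conclusion.
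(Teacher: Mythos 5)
Your proof is correct and follows essentially the same route as the paper: rescale by $M^{-1/2}$ to reduce to the exact isotropic case of Theorem~\ref{thm:MSS}, note that the per-vector norm bound becomes $\varepsilon/L$, and conjugate back by $M^{1/2}$. Your explicit handling of the degenerate case $\tfrac{L}{2}-3\sqrt{L\varepsilon}\le 0$ (needed so that the scalar in the final conjugation step is nonnegative) is a small point the paper leaves implicit.
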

\begin{proof}
  Abbreviate $M := \sum_{i=1}^m v_iv_i^{\top}$ which is a PSD matrix with $M \succeq L \cdot I_d$.
  Define $v_i' := M^{-1/2}v_i$. Then
  $\sum_{i=1}^m v_i'(v_i')^{\top} = M^{-1/2}\big(\sum_{i=1}^m v_iv_i^{\top}\Big)M^{-1/2} = I_d$.
We set $\varepsilon' := \frac{\varepsilon}{L}$ and verify that for all $i$ one has  
  $
    \|v_i'\|_2^2 = v_i^\top M^{-1} v_i \le v_i^\top (\tfrac{1}{L} I_d) v_i = \frac{\|v_{i}\|_2^2}{L} \leq  \varepsilon'.
  $
Then we apply Theorem~\ref{thm:MSS} to the vectors $\{v_i'\}_{i \in [m]}$ and obtain 
a partition $[m] = S_1 \dot{\cup} S_2$ so that for $j \in \{ 1,2\}$ one has 
\[
  \color{black} {M^{-1/2}\Big(\sum_{i \in S_j} v_iv_i^{\top}\Big)M^{-1/2} 
                        =  \sum_{i \in S_j} v_i'(v_i')^\top \stackrel{\textrm{Thm~\ref{thm:MSS}}}{\succeq} \Big(\frac{1}{2}-3\sqrt{\varepsilon/L}\Big) I_d},
                      \]
and using the fact that $A \succeq B \implies M^{1/2} A M^{1/2} \succeq M^{1/2} B M^{1/2}$, we conclude
\[\sum_{i \in S_j} v_iv_i^{\top} \succeq \Big(\frac{1}{2}-3\sqrt{\varepsilon/L}\Big) M^{1/2} I_d M^{1/2} \succeq \Big(\frac{L}{2}-3\sqrt{L\varepsilon}\Big) I_d. \qedhere \]
\end{proof}
Now to the main lemma of this section where we decompose an approximately regular matrix by iteratively applying Lemma~\ref{lem:MSS-only-LB}.
\begin{lemma} \label{lem:FindingkDisjSubmatricesOfA}
  There is a universal constant $C>0$ so that the following holds. Let $A \in \setR^{m \times d}$ be an approximately regular matrix. Then there are disjoint subsets $J_1 \dot{\cup} \cdots \dot{\cup} J_k \subseteq [m]$ with $k \geq \frac{m}{Cd}$ and $|J_{\ell}| \leq Cd$ and $\sum_{i \in J_{\ell}} A_iA_i^\top \succeq \frac{1}{Ck} I_d$ for all $\ell \in [k]$.
\end{lemma}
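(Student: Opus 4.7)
The plan is to construct the partition by iteratively bisecting with Lemma~\ref{lem:MSS-only-LB}. The trivial range $m \le Cd$ is handled by taking $k=1$ and $J_1=[m]$: approximate regularity gives $\sum_i A_iA_i^\top = I_d \succeq \tfrac{1}{C}I_d$ directly. So assume $m > Cd$, and set $R := \lfloor \log_2(m/(Cd)) \rfloor$ so that $k := 2^R$ satisfies $k \in [\,m/(2Cd),\, m/(Cd)\,]$.

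Starting from the trivial partition $\{[m]\}$, I repeatedly apply Lemma~\ref{lem:MSS-only-LB} to each current block, doubling the number of parts per round. Approximate regularity uniformly bounds the row norms squared by $\varepsilon := 4d/m$, so after $r$ rounds the common PSD lower bound $L_r$ inherited by every part satisfies
\[
  L_{r+1} \;\ge\; \tfrac{L_r}{2} - 3\sqrt{L_r\,\varepsilon}.
\]
I then rescale by writing $\alpha_r := 2^r L_r$. A key observation is that since the $2^r$ parts still partition $[m]$ and $\sum_i A_iA_i^\top = I_d$, summing the matrix lower bounds over all parts gives $2^r L_r I_d \preceq I_d$, i.e.\ $\alpha_r \le 1$ automatically. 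With this normalization the recurrence becomes
\[
  \alpha_r - \alpha_{r+1} \;\le\; 12\sqrt{\alpha_r\cdot 2^r \cdot d/m} \;\le\; 12\sqrt{2^r d/m}.
\]
Telescoping and using $2^R \le m/(Cd)$ yields $\alpha_R \ge 1 - O(\sqrt{2^R d/m}) \ge 1 - O(1/\sqrt{C})$, so choosing $C$ large enough forces $\alpha_R \ge 1/2$, i.e.\ $L_R \ge \tfrac{1}{2k}$.

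At this point $[m]$ is partitioned into $k$ parts each obeying the PSD lower bound, but the individual sizes are not yet controlled. To enforce the size bound I simply discard every part of cardinality exceeding $4Cd$: since $\sum_\ell |J_\ell| = m$, at most $m/(4Cd) \le k/2$ parts can be oversized, so at least $k' \ge k/2 \ge m/(2Cd)$ survive. Each surviving $J_\ell$ has $|J_\ell| \le 4Cd$ and $\sum_{i \in J_\ell} A_iA_i^\top \succeq \tfrac{1}{2k}I_d \succeq \tfrac{1}{4k'}I_d$ (using $k \le 2k'$). Absorbing all constants into a single $C' := 4C$ yields the statement of the lemma.

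The main obstacle is the drift analysis: each application of Lemma~\ref{lem:MSS-only-LB} loses a factor $(1 - 6\sqrt{\varepsilon/L_r})$ relative to the ideal halving, and these errors must not accumulate over the $R = \Theta(\log(m/d))$ rounds. The saving grace is that the telescoped error is controlled by $\sqrt{2^R\,\varepsilon}$, which is $O(1/\sqrt{C})$ precisely because subdivision stops at scale $Cd$; this is what forces $C$ to be a large but finite universal constant (and also ensures $L_r > 36\varepsilon$ throughout, so the MSS-type bound remains nontrivial at every step). The free upper bound $\alpha_r \le 1$ supplied by the partition constraint keeps the drift analysis self-contained without invoking the upper-bound half of the Kadison--Singer theorem.
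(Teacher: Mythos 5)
Your proof is correct and follows essentially the same route as the paper: iterated bisection via Lemma~\ref{lem:MSS-only-LB} down to $\Theta(m/d)$ parts, followed by discarding oversized parts by an averaging argument; the only difference is cosmetic bookkeeping, as the paper verifies the closed form $L_s = 2^{-s} - 15\sqrt{2^{-s}\varepsilon}$ by induction while you normalize $\alpha_r = 2^rL_r$ and telescope the drift (and your observation $\alpha_r \le 1$ also follows directly from $L_{r+1} \le L_r/2$). The drift constant should be $24$ rather than $12$ in the displayed recurrence, but this is absorbed into $C$ and does not affect the argument.
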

\begin{proof}
  If $\frac{m}{d} \le C$ we may set $k = 1$ and $J_1 = [m]$, so assume $m \ge Cd$. Set $\varepsilon := 4\frac{d}{m}$ so that $\|A_i\|_2^2 \leq \varepsilon$ for all $i \in [m]$.
  Let $t \in \setN$ be a parameter that we choose later. For $s \in \{ 0,\ldots,t\}$ we will obtain
  partitions $\pazocal{P}_s$ of the row indices starting with $\pazocal{P}_0 := \{ [m] \}$ so that $\pazocal{P}_{s+1}$ is a
  refinement of $\pazocal{P}_s$ and moreover $|\pazocal{P}_s| = 2^s$.
  More precisely, in each iteration $s \in \{ 0,\ldots,t-1\}$ and for each $S \in \pazocal{P}_s$,
  we apply Lemma~\ref{lem:MSS-only-LB} to the vectors $\{A_i\}_{i \in S}$; if $S = S_1 \dot{\cup} S_2$ is the obtained partition,
  then we add $\{S_1,S_2\}$ to $\pazocal{P}_{s+1}$.
We first analyze the corresponding eigenvalue lower bound.   
Define $L_s := 2^{-s} - 15\sqrt{2^{-s}\varepsilon}$. \\ 
 {\bf Claim.} \emph{If $2^t \leq \frac{m}{Cd}$ for a large enough constant $C>0$, then for all $s \in \{ 0,\ldots,t\}$ one has $\sum_{i \in S} A_iA_i^{\top} \succeq L_s I_d$ for all $S \in \pazocal{P}_s$.} \\
 {\bf Proof of Claim.} Clearly  $L_s \leq 2^{-s}$ all $s \geq 0$. 
 We will prove the claim by induction on $s$. For $s=0$ one has $\pazocal{P}_0 = \{ [m]\}$ and the claim is true as $L_0 \leq 1$.
  Now consider an iteration $s \in \{ 0,\ldots,t-1\}$ and suppose $S \in \pazocal{P}_s$ is split into $S= S_1 \dot{\cup} S_2$. Then $\sum_{i \in S_j} A_iA_i^\top \succeq (\frac{L_s}{2} - 3\sqrt{L_s\varepsilon}) I_d$ for both $j \in \{ 1,2\}$. This is at least $L_{s+1}$ as:
  \[
    \frac{L_s}{2} - 3\sqrt{L_s\varepsilon} \stackrel{L_s\leq 2^{-s}}{\geq} \frac{L_s}{2} - 3\sqrt{2^{-s}\varepsilon}
     \ge 2^{-(s+1)} - \frac{15}{2}\sqrt{2^{-s}\varepsilon} - 3\sqrt{2^{-s}\varepsilon} \geq 2^{-(s+1)} - 15\sqrt{2^{-(s+1)}\varepsilon}.
  \]
  Here we use $15/2 +3 \leq 15\sqrt{2^{-1}}$. This shows the claim. \qed

  For a large enough constant $C$, we pick $t \in \setN$ so that $\frac{m}{2Cd} \leq 2^t \leq \frac{m}{Cd}$.
  Then $L_t \geq \frac{Cd}{m} - 15 \sqrt{\frac{2Cd}{m} \cdot 4\frac{d}{m} } = \frac{d}{m} \cdot (C - 15 \sqrt{8C}) \geq \frac{C}{2} \cdot \frac{d}{m}$ for $C$ large enough.
  Moreover we know that $\E_{S \sim \pazocal{P}_t}[|S|] = \frac{m}{2^t} \leq 2Cd$.
  Then by Markov's inequality at least half the sets $S \in \pazocal{P}_t$ have at most $4Cd$ indices. Those sets will satisfy the statement.
\end{proof}

\subsection{Proof of Theorem~\ref{thm:SlicesOfNormZonotopes}}

Next we prove our main technical result, Theorem~\ref{thm:SlicesOfNormZonotopes}.
Recall that a measure $\mu$ on $\setR^d$ is called \emph{log-concave} if for all compact
subsets $S,T \subseteq \setR^d$ and $0 \le \lambda \le 1$ one has 
\[
 \mu(\lambda S + (1-\lambda)T) \geq \mu(S)^{\lambda} \cdot \mu(T)^{1-\lambda} 
\]
By induction one can verify that for any compact subsets $S_1, \dots, S_k \subseteq \setR^d$ and $\lambda_1,\ldots,\lambda_k \ge 0$ with $\sum_{i=1}^k \lambda_i = 1$ we have $\mu(\lambda_1 S_1 + \cdots + \lambda_k S_k) \geq \prod_{\ell=1}^k \mu(S_{\ell})^{\lambda_\ell}$.
Also recall that the Gaussian measure $\gamma_d$ is indeed log-concave, see e.g. \cite{AsymptoticGeometricAnalysisBook2005}. For a matrix $A \in \setR^{m \times d}$ and indices $J \subseteq [m]$
we denote $A_J \in \setR^{|J| \times d}$ as the submatrix of $A$ with rows in $J$. 

\begin{proof}[Proof of Theorem~\ref{thm:SlicesOfNormZonotopes}]
  Let $K \subseteq \setR^d$ be a normalized zonotope and let $H \subseteq \setR^d$ be a subspace with dimension $n$.
  Then we can write $K = \sqrt{\frac{d}{m}} A^{\top}B_{\infty}^m$ where $A \in \setR^{m \times d}$ is approximately regular.
  We use Lemma~\ref{lem:FindingkDisjSubmatricesOfA} to obtain disjoint subsets $J_1 \dot{\cup} \cdots \dot{\cup} J_k \subseteq [m]$ with $k \geq \frac{m}{Cd}$ so that $\sum_{i \in J_{\ell}} A_iA_i^{\top} \succeq \frac{d}{Cm} I_d$ where $C>0$ is a constant. Consider the zonotope $K_{\ell} := \sqrt{\frac{d}{m}} A_{J_{\ell}}^{\top}B_{\infty}^{|J_{\ell}|}$ generated by the rows with indices in $J_{\ell}$. Then we have $K_1 + \ldots + K_k \subseteq K$ and $(K_1 \cap H) + \ldots + (K_{k} \cap H) \subseteq K \cap H$.
  Note that for each $\ell \in [k]$ we have $k K_{\ell}  \supseteq \sqrt{\frac{k}{C}} A_{J_{\ell}}^{\top}B_{\infty}^{|J_{\ell}|}$, so that $\sum_{i \in J_{\ell}} (\sqrt{\tfrac{k}{C}} A_i)(\sqrt{\tfrac{k}{C}} A_i)^{\top} \succeq \frac{k}{C} \cdot \frac{d}{Cm} I_d\succeq \frac{1}{C^3} I_d$. 
  Then applying Lemma~\ref{lem:MeasureOfSliceDirectProofII} with $\alpha := \frac{1}{C^3}$ we have
  \[
    \gamma_H\big(t C^{3/2} k K_{\ell} \cap H\big) \geq \exp\big(-e^{t^2/2} \cdot n\big)
  \]
  for all $t \geq 1$.
  Finally, using log-concavity of the Gaussian measure we obtain
  \begin{eqnarray*}
  \gamma_H\big(tC^{3/2} K \cap H\big) &\geq& \gamma_H\big( (tC^{3/2}K_1 \cap H) + \ldots + (tC^{3/2}K_k \cap H)\big) \\ &\geq& \prod_{\ell=1}^{k} \gamma_H\big(tC^{3/2} \cdot k K_{\ell} \cap H\big)^{1/k} \geq \exp\big(-e^{-t^2/2} \cdot n\big). \qedhere
  \end{eqnarray*}
\end{proof}

\section{The vector balancing constant $\mathrm{vb}(K,K)$}

Next, we show how to translate measure lower bounds for sections into an improved bounds on the vector balancing constant.

\subsection{Tight partial colorings for zonotopes}

First we prove a generalization of the constant discrepancy partial coloring for the Koml\'os setting: 

\begin{lemma} \label{lem:partial_komlos_gen}
  Let $v_1,\ldots,v_n \in B_2^d$ and let $K \subseteq \setR^d$ be a symmetric convex body with $\gamma_H(K \cap H) \geq e^{-\alpha n}$ for some $\alpha > 0$
  where $H = \spn\{ v_1,\ldots,v_n\}$. Then there is a randomized polynomial time algorithm that given a shift $y \in (-1,1)^n$ finds a good
 partial coloring $x + y \in [-1,1]^n$ with $\sum_{j=1}^n x_jv_j \in c K$ where $c := c(\alpha)$ is a constant.
\end{lemma}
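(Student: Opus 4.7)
The plan is to reduce to Theorem~\ref{thm:partial_coloring} by pulling the constraint back to the space $\setR^n$ of colorings. Let $V \in \setR^{d \times n}$ be the matrix with columns $v_1,\ldots,v_n$, so that $\sum_{j=1}^n x_j v_j = Vx$, and the target constraint $\sum_j x_j v_j \in cK$ becomes $x \in c V^{-1}(K)$. Setting $K' := V^{-1}(K) \subseteq \setR^n$, which is symmetric and convex, it suffices to produce a $2n/3$-dimensional subspace $F \subseteq \setR^n$ with $\gamma_F(K' \cap F) \geq e^{-\alpha' n}$ for some $\alpha' = \alpha'(\alpha)$; Theorem~\ref{thm:partial_coloring} applied to $K'$, $F$ and the shift $y$ then yields $x \in c K' \cap F$ with $x + y$ a good partial coloring, and $Vx = \sum_j x_j v_j \in c K$ as desired.

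The key step is choosing $F$ so that $V|_F$ does not stretch a standard Gaussian by more than a constant. Since $v_j \in B_2^d$ we have $\tr(V^\top V) = \sum_j \|v_j\|_2^2 \leq n$, so sorting the eigenvalues of $V^\top V$ in decreasing order, the $(\lceil n/3 \rceil + 1)$-st eigenvalue is at most $n/(\lceil n/3 \rceil + 1) < 3$. I take $F$ to be the span of the eigenvectors of $V^\top V$ corresponding to the smallest $2n/3$ eigenvalues; then $\|V|_F\|_{\textrm{op}}^2 \leq 3$, and writing $H' := V(F) \subseteq H$, the covariance $M := V|_F V|_F^\top$ satisfies $0 \preceq M \preceq 3 \cdot I_{H'}$.

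With this choice, for $x \sim N(0, I_F)$ the pushforward $Vx$ has covariance $M$ and is supported on $H'$. Chaining the comparison inequalities gives
\begin{align*}
\gamma_F(K' \cap F) = \gamma_M(K \cap H') &\stackrel{\textrm{Lem~\ref{lem:ComparisonGaussians}}}{\geq} \gamma_{3 I_{H'}}(K \cap H') = \gamma_{H'}\big(\tfrac{1}{\sqrt{3}} K \cap H'\big) \\
&\stackrel{\textrm{Lem~\ref{lem:ProbOfSubspaceVsBody}}}{\geq} \gamma_H\big(\tfrac{1}{\sqrt{3}} K \cap H\big) \stackrel{\textrm{Lem~\ref{lem:ScalingLemma}}}{\geq} e^{-\alpha' n},
\end{align*}
where the first inequality uses $M \preceq 3 I_{H'}$, the second applies the subspace comparison inside $H$ to $H' \subseteq H$, and the final step trades the constant scaling $1/\sqrt{3}$ for a worse constant in the exponent starting from the hypothesis $\gamma_H(K \cap H) \geq e^{-\alpha n}$.

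The main subtlety I anticipate is the case $\dim H < 2n/3$, which forces $F$ to intersect $\ker V$ nontrivially, so that $K' = V^{-1}(K)$ is unbounded along those directions. This does not affect the measure calculations — the kernel directions contribute a factor of $1$ to $\gamma_F$ — but one must check that Theorem~\ref{thm:partial_coloring} tolerates symmetric convex sets that are unbounded along coordinate subspaces. This is standard, e.g., by replacing $K'$ with $K' \cap R \cdot B_2^n$ for a large radius $R$ (any $R \geq \sqrt{n}$ suffices, since the good partial coloring produced by the algorithm lies in $[-1,1]^n$ after the shift anyway).
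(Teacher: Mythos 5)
Your proof is correct and follows essentially the same route as the paper: both select a $2n/3$-dimensional subspace on which $x\mapsto\sum_j x_jv_j$ has operator norm $O(1)$ via a trace--Markov argument on the spectrum (you diagonalize $V^{\top}V$ in $\setR^n$, the paper diagonalizes $VV^{\top}=\sum_j v_jv_j^{\top}$ in $\setR^d$, which is equivalent by the SVD), then chain Lemma~\ref{lem:ComparisonGaussians} and Lemma~\ref{lem:ProbOfSubspaceVsBody} to transfer the measure bound and invoke Theorem~\ref{thm:partial_coloring}. Your handling of the constant via Lemma~\ref{lem:ScalingLemma} and your remark on the unboundedness of $V^{-1}(K)$ along $\ker V$ are fine (the paper absorbs the constant $3/2$ into $c(\alpha)$ and does not comment on the kernel issue).
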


\begin{proof} 
  Let $Z \sim \sum_{j=1}^n z_j v_j$ where $z_i \sim N(0,1)$ are i.i.d. Gaussians so that $\mathbb{E}[ZZ^\top] = \sum_{j=1}^n v_j v_j^\top$ has trace $\tr \big[ \mathbb{E}[ZZ^\top]\big] = \sum_{j=1}^n \|v_j\|_2^2 \le n$. Let $u_1, \dots, u_r$ be an orthonormal basis of $H$ with $r \le n$, and write $\sum_{j=1}^n v_j v_j^\top = \sum_{j=1}^r \sigma_j u_j u_j^\top$. Since $\sum_{j=1}^n v_jv_j^{\top} \succeq 0$, we have $\sigma_j \geq 0$ for all $j$.
  Then after reindexing we may assume that  $0 \leq \sigma_1 \le \sigma_2 \le \dots \le \sigma_r$.
  Since $\sum_{j=1}^r \sigma_j = \sum_{j=1}^n \|v_j\|_2^2 \le n$ we know by Markov's Inequality that  $\sigma_{2n/3} \le 3/2$, denoting $\sigma_j = 0$ for $j > r$. Thus restricting to the subspaces $F := \spn\{u_1, \dots, u_{2n/3}\}$ and $V := \{ g \in \setR^n \mid \sum_{j=1}^n g_jv_j \in F\}$
  with $\dim(V) \geq \frac{2}{3}n$, we may lower bound
\begin{eqnarray*}
\Pr_{g \sim N(0, I_V)} \Big[\sum_{j=1}^n g_j v_j \in 3/2 \cdot K\Big] & = & \Pr_{g \sim N(0, I_{2n/3})} \Big[\sum_{j=1}^{2n/3} g_j \cdot \sigma_j u_j u_j^\top \in 3/2 \cdot K\Big] \\ & \stackrel{(*)}{\ge} & \Pr_{g \sim N(0, I_{2n/3})} \Big[\sum_{j=1}^{2n/3} g_j \cdot 3/2 \cdot u_j u_j^\top \in 3/2 \cdot K\Big] \\ & =& \gamma_F (K \cap F) \\ & \stackrel{\textrm{Lem~\ref{lem:ProbOfSubspaceVsBody}}}{\ge}& \gamma_H (K \cap H) \\ & \ge & e^{-\alpha n},
\end{eqnarray*}
where $(*)$ follows by Lemma~\ref{lem:ComparisonGaussians}. Then by Theorem~\ref{thm:partial_coloring}, the symmetric convex body $Q := \{x \in \setR^n : \sum_{j=1}^n x_j v_j \in K\}$ contains a good partial coloring in $Q \cap F$.
\end{proof}
Then Lemma~\ref{lem:partial_komlos_gen} implies the existence of a partial coloring
with optimal bounds as long as $n$ is of the order of $d$: 
\begin{corollary} \label{lem:PartialColoringForZonotope}
  Let $K \subseteq \setR^d$ be a normalized zonotope and let $v_1,\ldots,v_n \in K$. Then there is a randomized polynomial
  time algorithm to find a good partial coloring $x \in [-1,1]^n$ so that $\|\sum_{j=1}^n x_jv_j\|_K \lesssim \sqrt{d}$.
\end{corollary}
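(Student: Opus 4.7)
The plan is to combine Theorem~\ref{thm:SlicesOfNormZonotopes} (measure lower bound for sections of normalized zonotopes) with Lemma~\ref{lem:partial_komlos_gen} (Koml\'os-style partial coloring from a Gaussian measure bound). The only preparatory step is a rescaling so that the input vectors lie in the Euclidean unit ball, which is needed to invoke Lemma~\ref{lem:partial_komlos_gen}.

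First I would use Lemma~\ref{lem:contained_ball}, which gives $K \subseteq \sqrt{d}\, B_2^d$, to pass to the rescaled vectors $w_j := v_j/\sqrt{d} \in B_2^d$. Let $H := \spn\{v_1,\ldots,v_n\} = \spn\{w_1,\ldots,w_n\}$ and set $r := \dim(H) \leq \min\{n,d\}$. Applying Theorem~\ref{thm:SlicesOfNormZonotopes} to the normalized zonotope $K$ and subspace $H$ with parameter $t=1$ yields a universal constant $C>0$ with
\[
\gamma_H\bigl(C \cdot K \cap H\bigr) \;\geq\; \exp(-e^{-1/2}\, r) \;\geq\; \exp(-e^{-1/2}\, n),
\]
so the symmetric convex body $CK$ satisfies the hypothesis of Lemma~\ref{lem:partial_komlos_gen} with $\alpha := e^{-1/2}$.

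Next, I would feed the vectors $w_1,\ldots,w_n \in B_2^d$, the body $CK$, and the trivial shift $y=0 \in (-1,1)^n$ into Lemma~\ref{lem:partial_komlos_gen}. This produces in randomized polynomial time a good partial coloring $x \in [-1,1]^n$ satisfying $\sum_{j=1}^n x_j w_j \in c' \cdot CK$ for a constant $c' = c'(\alpha)$. Multiplying by $\sqrt{d}$ gives
\[
\sum_{j=1}^n x_j v_j \;\in\; c' C \sqrt{d}\cdot K,
\]
which is the required bound $\|\sum_{j=1}^n x_j v_j\|_K \lesssim \sqrt{d}$.

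There is no genuine obstacle here beyond plugging the pieces together; the only minor subtlety is that $\dim(H)$ may be strictly less than $n$ when $n>d$, but the measure estimate only gets stronger in that case since $\exp(-e^{-1/2} r) \geq \exp(-e^{-1/2} n)$. All the heavy lifting — the tight section measure bound and the measure-to-partial-coloring conversion — is already encapsulated in the two prior results.
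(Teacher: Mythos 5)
Your proposal is correct and follows essentially the same route as the paper: Theorem~\ref{thm:SlicesOfNormZonotopes} with $t=1$ for the measure bound, Lemma~\ref{lem:contained_ball} to rescale the vectors into $B_2^d$, and Lemma~\ref{lem:partial_komlos_gen} with shift $y=0$ to extract the partial coloring. The only cosmetic difference is that the paper first invokes Lemma~\ref{lem:ScalingLemma} to transfer the measure bound from $C\cdot K$ to $K$ itself, whereas you simply apply the partial-coloring lemma to the body $C\cdot K$ and absorb the constant at the end; both are valid.
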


\begin{proof}
By Theorem~\ref{thm:SlicesOfNormZonotopes}, denoting $H := \spn\{v_1, \dots, v_n\}$, we have $\gamma_H (C \cdot K \cap H) \ge e^{-n}$. By Lemma~\ref{lem:ScalingLemma}, there exists some constant $\alpha > 0$ such that $\gamma_H(K \cap H) \ge e^{-\alpha n}$. By Lemma~\ref{lem:contained_ball}, $v_i \in \sqrt{d} B^d_2$, so that the statement follows directly from Lemma~\ref{lem:partial_komlos_gen}.
\end{proof}

\subsection{Proof of the main Theorem}

Now we have all the ingredients to prove our main result, Theorem~\ref{thm:VBofKwithK}.
\begin{proof}[Proof of Theorem~\ref{thm:VBofKwithK}]
  By Theorem~\ref{thm:sparsify}, we may assume that $K$ is generated by only $m \lesssim d \log d$ segments, and by Lemma~\ref{lem:normalized}, we may assume that $K$ is a normalized zonotope $K := \sqrt{\frac{d}{m}} A^\top B^m_\infty$ for some approximately regular $A \in \setR^{m \times d}$. By Theorem~\ref{thm:reduction_nd}, since $\vb(K,K) \le 2 \cdot \vb_d(K,K)$, we may assume that $n = d$, though for clarity we only use this in the final bound. As before we set  $Q := \{x \in \setR^n : \sum_{j=1}^n x_j v_j \in K\}$.
  We iteratively apply Lemma~\ref{lem:partial_komlos_gen} for $t$ rounds to obtain a partial coloring $x' \in Q \cap [-1,1]^n$, so that the set $I := \{i : |x'_i| < 1\}$ of partially colored indices satisfies $|I| \le n/2^t$, and by the triangle inequality over the $t$ rounds $\|\sum_{j=1}^n x'_j v_j\|_K \lesssim \sqrt{d} \cdot t$.

For each $j \in I$, we may write $v_j = \sqrt{\frac{d}{m}} A^\top u_i$ for some $u_i \in B_{\infty}^m$. By Theorem~\ref{thm:spencer}, we can find $\tilde{x} \in \setR^n$ so that $x := \tilde{x} + x' \in \{-1,1\}^n$ and $\sum_{i \in I} \tilde{x}_i u_i \in \sqrt{|I| \log(\frac{2m}{|I|})} \cdot c \cdot B^{m}_\infty$ where we set $\tilde{x}_i = 0$ for $i \notin I$. Therefore, setting $t := \log \log (\frac{2m}{n})$,
\begin{align*}
\Big\|\sum_{j=1}^n x_j v_j \Big\|_K & \le \Big\|\sum_{j=1}^n x'_j v_j \Big\|_K + \Big\|\sum_{j \in I} \tilde{x}_j v_j \Big\|_K \\ & \lesssim \sqrt{d} \cdot t + \sqrt{\frac{n}{2^t} \cdot \log\Big(\frac{2m}{n/2^t}\Big)} \\ & = \sqrt{d} \log \log \Big(\frac{2m}{n}\Big) + \underbrace{\sqrt{\frac{n}{\log(\frac{2m}{n})} \cdot \log\Big(\frac{2m}{n} \cdot \log \Big(\frac{2m}{n}\Big)\Big)}}_{\lesssim \sqrt{n} \le \sqrt{d}} \\ & \lesssim \sqrt{d} \log \log \Big(\frac{2m}{n}\Big) \\ & \lesssim \sqrt{d} \log \log \Big(\frac{d \log d}{n}\Big).\end{align*}

We conclude that $\vb(K,K) \lesssim \vb_d(K,K) \lesssim \sqrt{d} \log \log \log d$. \qedhere

\end{proof}

\section{The vector balancing constant $\mathrm{vb}(K,Q)$}
In this section we prove Theorem~\ref{thm:VBofKwithQ}, stating that $\vb(K,Q) \lesssim \sqrt{d \log d}$ where $K$ and $Q$ are normalized zonotopes.
First note that Cor~\ref{lem:PartialColoringForZonotope} indeed generalizes and for any $v_1,\ldots,v_n \in K$
there is a good partial coloring $x \in [-1,1]^n$ with $\|\sum_{j=1}^n x_jv_j\|_Q \lesssim \sqrt{d}$.
On the other hand, in the proof of Theorem~\ref{thm:VBofKwithK} we have also relied on Spencer's Theorem
which implies that $\vb_n(K,K) \lesssim \sqrt{n \log(\frac{2m}{n})}$. In particular this gives a bound that
improves as $n$ decreases. However in our setting with different zonotopes $K$ and $Q$ such a bound does not hold!

To see this, let $H \in \{ -1,1\}^{d \times d}$ be a Hadamard matrix, meaning that all rows and columns are
orthogonal. Then one can verify that $K := \frac{1}{\sqrt{d}}H^{\top}B_{\infty}^d$ is a normalized zonotope;
in fact, $K$ is a rotated cube. Fix any $n \leq d$ and consider the points $v_1,\ldots,v_n \in K$
with $v_i = \frac{1}{\sqrt{d}}H^{\top}H^i = \sqrt{d} \cdot e_i$. We choose $Q := B_{\infty}^d$ as the second normalized zonotope. Any good partial coloring $x \in [-1,1]^n$
must have a coordinate $i$ with $|x_i| \geq \frac{1}{2}$ and so $\|\sum_{j=1}^n x_jv_j\|_Q \geq \sqrt{d} |x_i| \geq \frac{\sqrt{d}}{2}$. 

Hence instead of applying Cor~\ref{lem:PartialColoringForZonotope} iteratively and obtaining a bound of $\vb(K,Q) \lesssim \sqrt{d} \log d$, we use Banaszczyk's Theorem together with Theorem~\ref{thm:SlicesOfNormZonotopes}:
\begin{proof}[Proof of Theorem~\ref{thm:VBofKwithQ}] Let $K,Q\subseteq\mathbb{R}^d$ be normalized zonotopes, and let $v_1,\ldots,v_n\in K$ be the vectors to be balanced. Define $H:=\mathrm{span}\{v_1, \ldots ,v_n\}$ and let  $r := \dim(H)\leq \min\{d,n\}$. By applying Theorem~\ref{thm:SlicesOfNormZonotopes} to the zonotope $Q$, subspace $H$, and $t:=\sqrt{2\log 2r}$, we find that
\begin{equation*}
    \gamma_H\big(\sqrt{2\log2r}C'Q\cap H\big)\geq e^{-\frac{1}{2}}>\frac{1}{2}.
\end{equation*}
By Lemma~\ref{lem:contained_ball} we know that $v_i\in \sqrt{d}B_2^d$ for each $i\in[n]$, hence by Theorem~\ref{thm:Banaszczyk}, signs $x\in\{-1,1\}^n$ can be computed in polynomial time such that
\begin{equation*}
  \sum_{j=1}^nx_jv_j\in \sqrt{d}C''\left(\sqrt{2\log2r}C'Q\cap H\right) 
  \subseteq C\sqrt{d\log \min\{d,n\}}Q,
\end{equation*}
as desired. In particular, $\mathrm{vb}(K,Q)\lesssim\sqrt{d\log d}$. \end{proof}

\section{Open problems\label{sec:OpenProblems}}

The main open question about zonotopes is whether a $d$-dimensional zonotope can be approximated up to a constant factor using only a linear number of segments: 

\begin{conjecture}[\cite{AIMWorkshop2007}] \label{conj: sparseZonotope}
For any zonotope $K \subseteq \setR^d$ and $0<\varepsilon \leq \frac{1}{2}$, does there exist a zonotope $Q$ with $O(\frac{d}{\varepsilon^2})$ segments so that $Q \subseteq K \subseteq (1+\varepsilon)Q$?
\end{conjecture}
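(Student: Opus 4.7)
This is the longstanding sparse-zonotope conjecture attributed to Schechtman and amounts to an $\ell_1$-analogue of Batson--Spielman--Srivastava, so I do not expect a short proof. Nevertheless, here is a plan. First I would reduce to a convenient normal form: by Theorem~\ref{thm:sparsify} we may assume $m = O(d \log d / \varepsilon^2)$ segments at the outset, and by Lemma~\ref{lem:normalized} we may place $K = \sqrt{d/m}\,A^\top B_\infty^m$ with $A \in \setR^{m \times d}$ approximately regular. The target is to select $J \subseteq [m]$ with $|J| = O(d/\varepsilon^2)$ together with non-negative weights $c_i$ so that the support function $u \mapsto \sum_{i \in J} c_i |\langle A_i, u\rangle|$ is a $(1+\varepsilon)$-approximation of $u \mapsto \sum_{i=1}^m |\langle A_i, u\rangle|$ uniformly on the unit sphere; the sparsified zonotope is then $Q = \sum_{i \in J} c_i A_i\,[-1,1]$. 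A positive answer with $\varepsilon = 1/2$ would already imply Schechtman's original conjecture $\vb(K,K) \lesssim \sqrt{d}$ via the argument in the proof of Theorem~\ref{thm:VBofKwithK}.

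The first approach I would try is to adapt the two-barrier method of BSS to the $\ell_1$ setting. In BSS one maintains upper and lower barriers on the spectrum of a running PSD matrix, and shows that some rank-one update always keeps both barriers in check. For zonotopes the support function is a sum of absolute values, not a quadratic form, so I would smooth each $|\langle A_i,u\rangle|$ by $(\langle A_i,u\rangle^2 + \delta_t^2)^{1/2}$ with a schedule $\delta_t \downarrow 0$ as the construction proceeds, and attempt to dominate the smoothed objective by a matrix potential such as $\mathrm{tr}((M_t + \delta_t^2 I)^{1/2})$ where $M_t$ is the running weighted Gram matrix. The one-step update would be an $\ell_1$-flavored Sherman--Morrison calculation, and showing that such an update always exists while losing only $O(1)$ per segment added would be the crux.

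A second avenue is to use interlacing families of polynomials in the style of Marcus--Spielman--Srivastava. Since a Lewis-weight-based random sparsifier already achieves $O(d \log d / \varepsilon^2)$ segments in expectation, it would suffice to produce a real-stable polynomial whose roots encode the $\ell_1$-distortion of a chosen subset of segments and then derandomize via an interlacing-families argument, much as MSS removed a $\log$ factor in Kadison--Singer. Iterating Lemma~\ref{lem:FindingkDisjSubmatricesOfA} on its own only partitions the index set and does not geometrically sparsify, so new ideas are needed. The central obstacle in either route is that $\ell_1$ support functions of zonotopes lack a direct spectral certificate: neither the real-stable polynomial machinery nor the eigenvalue barrier technology apply off the shelf. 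Any successful proof will likely have to identify a convex or polynomial potential on weighted zonotopes that is simultaneously controlled under favorable rank-one updates and dominates the support function $h_K$ up to absolute constants, and I would first test any candidate potential on the isotropic equal-norm case.
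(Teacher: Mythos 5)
This statement is Conjecture~\ref{conj: sparseZonotope}, an open problem: the paper does not prove it, and neither do you. You correctly recognize this, and your submission is a research plan rather than a proof, so there is no argument to verify and no gap to locate beyond the obvious one --- the conjecture remains open. What you propose is broadly consistent with how the paper frames the problem: the authors note that the $\ell_2$ analogue follows from the Batson--Spielman--Srivastava linear-size spectral sparsifier, that the best known $\ell_1$ bound is Talagrand's $O(\frac{d}{\varepsilon^2}\log d)$ (Theorem~\ref{thm:sparsify}), that the natural normal form is the Lewis-weight rescaling of Appendix~\ref{sec:NormalizingZonotopes}, and that one plausible route is through $\ell_1$ partial colorings (their Conjecture 3), mirroring how spectral sparsifiers arise from spectral partial colorings. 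Your observation that Lemma~\ref{lem:FindingkDisjSubmatricesOfA} only partitions the segments and does not geometrically sparsify is also correct. Two small cautions on the plan itself: composing Theorem~\ref{thm:sparsify} as a preprocessing step costs a factor $(1+\varepsilon)^2$ that you would need to absorb by adjusting $\varepsilon$; and the claim that $\varepsilon = 1/2$ already yields Schechtman's conjecture requires the $O(d)$ segment bound to feed into a Spencer-type argument with $m = O(d)$, which is exactly the implication the paper sketches, so that part is fine. But none of this constitutes progress that can be assessed as a proof; if you intend to pursue it, the barrier-potential candidate you describe would need to be written down and tested, starting as you say with the isotropic equal-norm case.
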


Equivalently, since the polar body of a zonotope $A^\top B^m_\infty \subseteq \setR^d$ is the preimage $A^{-1} (B^m_1) := \{x \in \setR^d : \|Ax\|_1 \le 1\}$, we can restate the question as follows:

\begin{conjecture} \label{conj:SparseL1}
Does there exist a universal constant $C > 0$ such that given any matrix $A \in \setR^{m \times d}$ with $m \ge d$ and $0 < \varepsilon \leq \frac{1}{2}$, one can always find another matrix $\tilde{A} \in \setR^{Cd/\varepsilon^2 \times d}$ with $\|\tilde{A}x\|_1 \le \|Ax\|_1 \le (1+\varepsilon)\|\tilde{A}x\|_1$ for all $x \in \setR^d$?
\end{conjecture}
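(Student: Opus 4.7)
The statement is the well-known question of linear-size $\ell_1$-sparsifiers, equivalent to Conjecture~\ref{conj: sparseZonotope}; the current best is Talagrand's $O(d\log d/\varepsilon^2)$ bound (Theorem~\ref{thm:sparsify}), so the entire game is to remove the residual $\log d$ factor. My plan is to imitate the Batson--Spielman--Srivastava barrier approach that yields linear-size spectral sparsifiers, adapted to the $\ell_1$ setting by combining Lewis-weight normalization with the Kadison--Singer based decomposition machinery developed in Section~3.

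First, I would pass to Lewis position, using the Cohen--Peng construction~\cite{LewisWeights}: after reweighting the rows of $A$ by Lewis weights, the resulting matrix $A'$ has all Lewis weights equal to $d/m$, i.e.\ is ``approximately regular for $\ell_1$''. In this position, $\|A'x\|_1$ recovers $\|Ax\|_1$ up to a fixed rescaling and the rows are essentially interchangeable from the viewpoint of the $\ell_1$ norm --- this is the $\ell_1$-analogue of the approximately regular matrices of Section~3. Next, starting from $m$ rows, I would try to produce a partition $[m]=S_1\dot{\cup} S_2$ such that both subsystems $2A'_{S_j}$ satisfy $\|2A'_{S_j}x\|_1 \in (1\pm \delta)\|A'x\|_1$ uniformly in $x\in\setR^d$, for some $\delta$ decaying slowly enough that iterating $\log_2(m\varepsilon^2/d)$ times while union-bounding the distortion across rounds still yields total multiplicative error $1+\varepsilon$. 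Iterating would then reduce to $\Theta(d/\varepsilon^2)$ rows, matching the conjectured count.

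The main obstacle is precisely this $\ell_1$-preserving partition step, which would be a genuine strengthening of Theorem~\ref{thm:MSS}. The MSS theorem provides a two-sided \emph{spectral} bound via a real-rooted interlacing family of characteristic polynomials; but $\|A'x\|_1$ is not an eigenvalue of any fixed matrix, rather a supremum of linear functionals over the sphere, so the mixed characteristic polynomial framework does not directly apply. One natural attempt is to replace $\|A'x\|_1$ by the polynomial surrogate $\sum_i \langle a_i,x\rangle^{2k}$ for $k\approx \log(m/d)$ and apply spectral sparsification in the symmetric tensor space $(\setR^d)^{\otimes 2k}$; but the ambient dimension $d^{O(k)}$ and the $\sqrt{\varepsilon}$-loss of Theorem~\ref{thm:MSS} conspire to cancel precisely the savings one hoped for, reproducing the current $\log d$ bound. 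I therefore suspect that a genuinely new interlacing family --- one whose roots control $\ell_1$ rather than spectral data --- will be needed, and constructing such a family is the crux of the problem.
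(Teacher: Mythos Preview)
The statement you are addressing is Conjecture~\ref{conj:SparseL1}, which the paper lists in Section~\ref{sec:OpenProblems} as an \emph{open problem}; the paper gives no proof, and indeed explicitly presents it as the $\ell_1$-analogue of spectral sparsification whose resolution would imply Schechtman's question. There is therefore no ``paper's own proof'' to compare your proposal against.

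Your write-up is not a proof either, and you say so yourself: the entire argument hinges on an $\ell_1$-preserving two-sided partition step that would be a strict strengthening of Theorem~\ref{thm:MSS}, and you correctly observe that the interlacing-families machinery controls spectral data, not $\sup_x \sum_i |\langle a_i,x\rangle|$. Your tensor-power surrogate idea is a natural attempt, and your analysis of why it only recovers the $O(d\log d)$ bound is accurate. So what you have submitted is a reasonable research outline with the crux honestly flagged as unresolved --- which is the correct status of the problem --- but it should not be labeled a proof proposal. If the assignment was to prove the statement, the honest answer is that it is open; if it was to sketch an approach, you have done that and identified the genuine obstruction.
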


We remark that if one replaces the $\ell_1$ norm by the $\ell_2$ norm, an analogue of Conjecture~\ref{conj:SparseL1} holds as a direct corollary of a linear-size spectral sparsifier~\cite{TwiceRamanujanSparsifiers-BatsonSpielmanSrivastava-STOC09}. In that setting, each row of $\tilde{A}$ is a scalar multiple of a row of $A$, and there is hope that another rescaling of the rows of $A$ may suffice for the $\ell_1$ norm. Just as a spectral sparsifier can be found via spectral partial colorings~\cite{DBLP:conf/soda/ReisR20}, we also state the stronger conjecture of the existence of good partial colorings in the $\ell_1$ setting:

\begin{conjecture}
  Given any matrix $A \in \setR^{m \times d}$, does the set
  \[K := \Big\{x \in \setR^m : \Big|\sum_{i=1}^m x_i |\langle A_i, z\rangle| \Big| \le \sqrt{\tfrac{d}{m}} \|Az\|_1 \ \forall z \in \setR^d\Big\}\]
  have large Gaussian measure $\gamma_m (K) \ge e^{-Cm}$ where $C>0$ is a universal constant?
\end{conjecture}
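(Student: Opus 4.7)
The conjecture asks for a Gaussian measure lower bound on the partial-coloring body for the $\ell_1$-sparsification problem; by Theorem~\ref{thm:partial_coloring} this would yield a partial coloring $x \in [-1,1]^m$ with at least $m/2$ entries in $\{\pm 1\}$ such that $S := \{i : x_i = +1\}$ satisfies $\|A_S z\|_1 = \tfrac{1}{2}(1 \pm O(\sqrt{d/m}))\|Az\|_1$ for every $z$, and iterating this would produce a linear-size $\ell_1$ sparsifier (Conjecture~\ref{conj:SparseL1}). My plan is to parallel the proof of Theorem~\ref{thm:SlicesOfNormZonotopes}: normalize, decompose via iterated Kadison--Singer, then recombine via log-concavity.

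First, reduce to the approximately regular case. Using $\ell_1$-Lewis weights, find a diagonal $D$ and invertible $T$ so that $B := \sqrt{m/d}\,DAT$ is approximately regular in the paper's sense. The substitution $y = D^{-1}x$ transforms $K$ into the corresponding body for $B$, introducing only a Jacobian $\det(D)$ whose contribution $|\log \det D|$ is $O(m)$ by standard bounds on Lewis weights, hence harmless for an $e^{-Cm}$ statement. From here on assume $A$ is approximately regular.

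Second, apply Lemma~\ref{lem:FindingkDisjSubmatricesOfA} to partition $[m] = J_1 \dot\cup \cdots \dot\cup J_k$ with $k \asymp m/d$, $|J_\ell| \lesssim d$, and $\sum_{i \in J_\ell} A_iA_i^\top \succeq \tfrac{d}{C_0 m}I_d$. Define the block bodies
\[K_\ell := \Big\{ x \in \setR^{J_\ell} : \Big|\sum_{i \in J_\ell} x_i |\langle A_i, z\rangle|\Big| \le \sqrt{\tfrac{d}{m}}\,\|A_{J_\ell}z\|_1 \ \forall z \in \setR^d\Big\}.\]
Applying the triangle inequality $|\sum_\ell a_\ell| \le \sum_\ell |a_\ell|$ blockwise together with $\sum_\ell \|A_{J_\ell}z\|_1 = \|Az\|_1$ shows $K \supseteq K_1 \times \cdots \times K_k$ (embedded in the disjoint coordinate subspaces indexed by the $J_\ell$), so by independence of the Gaussian coordinates
\[\gamma_m(K) \ \ge\ \prod_{\ell=1}^k \gamma_{|J_\ell|}(K_\ell),\]
and it suffices to establish $\gamma_{|J_\ell|}(K_\ell) \ge e^{-C_1 |J_\ell|}$ for each block. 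Third, on a fixed block, bound $\mathbb{E}\|g\|_{K_\ell}$ by generic chaining applied to the Gaussian process $z \mapsto X_z := \sum_{i \in J_\ell} g_i |\langle A_i,z\rangle|$ on the compact parameter set $\{z : \|A_{J_\ell}z\|_1 = 1\}$. The increments are subgaussian with respect to the $\ell_2$ pseudometric $\|A_{J_\ell}(z-z')\|_2$, and the MSS spectral lower bound forces this set inside a $d$-dimensional ellipsoid of bounded Gaussian width. Standard Gaussian concentration of the supremum $\|g\|_{K_\ell}$ around its mean should then deliver the per-block measure bound.

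The main obstacle is the third step: $K_\ell$ is itself essentially an instance of the conjecture at parameters $(|J_\ell|, d)$, but with the \emph{original} normalization $\sqrt{d/m}$ in place of the natural $\sqrt{d/|J_\ell|}$, so because $|J_\ell| \lesssim d \ll m$ the per-block constraint is a $\sqrt{m/d}$ factor stricter than a standalone instance of the conjecture. Standard chaining bounds lose $\sqrt{\log m}$ factors that must be shaved. Closing this gap is exactly the point where the argument must go beyond the Kadison--Singer decomposition used for zonotope sections; morally, one would need a non-commutative concentration bound for $\ell_1$ column reweightings (an $\ell_1$ analogue of Marcus--Spielman--Srivastava). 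A successful proof of the conjecture by this route would simultaneously resolve Conjecture~\ref{conj:SparseL1} on linear-size $\ell_1$ sparsifiers, which is the central open problem in the area.
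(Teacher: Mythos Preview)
The statement you are addressing is listed in Section~\ref{sec:OpenProblems} as an \emph{open conjecture}; the paper does not prove it and offers no proof sketch. There is therefore nothing to compare your proposal against. A proof of this conjecture would imply Conjecture~\ref{conj:SparseL1} (linear-size $\ell_1$ sparsifiers), which the authors explicitly flag as the main open problem in the area.

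Your proposal is not a proof but an outline that, by your own admission, breaks at the third step. You correctly identify the obstacle: after decomposing into blocks $J_\ell$ of size $\Theta(d)$, the per-block body $K_\ell$ still carries the global normalization $\sqrt{d/m}$ rather than the natural $\sqrt{d/|J_\ell|}\asymp 1$, so each block is an instance of the same conjecture tightened by a factor $\sqrt{m/d}$. The chaining bound you invoke cannot recover this factor, and indeed any method that handled the blocks independently at this scale would amount to proving an $\ell_1$ Kadison--Singer theorem directly. So the decomposition buys nothing here: unlike in the proof of Theorem~\ref{thm:SlicesOfNormZonotopes}, where log-concavity lets one average the block contributions and the scaling works out, the product structure $K\supseteq K_1\times\cdots\times K_k$ preserves the full difficulty in each factor.

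Two smaller issues are also worth noting. First, Lemma~\ref{lem:FindingkDisjSubmatricesOfA} only produces \emph{disjoint} subsets $J_1,\ldots,J_k\subseteq[m]$, not a partition; the leftover indices must still be handled, and setting those coordinates to zero kills the Gaussian measure. Second, the Lewis-weight reduction in your first step is not as clean as stated: the substitution $y=D^{-1}x$ does not send standard Gaussian to standard Gaussian, and a ``Jacobian $\det D$'' argument applies to Lebesgue, not Gaussian, measure. One would need to control how the body $K$ for $A$ relates to the body for the regularized matrix $B$ as subsets of $\setR^m$, and the diagonal rescaling of the $x$-coordinates interacts nontrivially with the $e^{-Cm}$ target.
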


Finally, we restate Schechtman's question, which would also follow from the above conjectures:

\begin{conjecture}[\cite{AIMWorkshop2007}]
Is it true that for any zonotope $K \subseteq \setR^d$, $\vb(K,K) \lesssim \sqrt{d}$?
\end{conjecture}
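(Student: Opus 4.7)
The plan is to route through Conjecture~\ref{conj: sparseZonotope}: given any zonotope $K = A^\top B_\infty^m \subseteq \setR^d$ with $m \gg d$, produce a sparser zonotope $\tilde K = \tilde A^\top B_\infty^{Cd}$ with $\tilde K \subseteq K \subseteq 2\tilde K$ and only $Cd$ segments. Once this reduction is in hand, Schechtman's bound follows immediately: for any $v_1,\ldots,v_n \in K$ one writes $v_i = 2\tilde A^\top u_i$ with $u_i \in B_\infty^{Cd}$, applies Spencer's Theorem~\ref{thm:spencer} to the $Cd \times n$ matrix with columns $u_1,\ldots,u_n$, and obtains a coloring $x \in \{-1,1\}^n$ with $\|\sum_j x_j u_j\|_\infty \lesssim \sqrt{n \log(2Cd/n)} \lesssim \sqrt{d}$, after reducing to $n \le d$ via Theorem~\ref{thm:reduction_nd}. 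This directly gives $\|\sum_j x_j v_j\|_K \lesssim \sqrt{d}$, bypassing the iterated partial coloring and the accompanying $\log\log\log d$ loss of Theorem~\ref{thm:VBofKwithK}.

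So the real target is the linear-size $\ell_1$ sparsification conjecture. I would try to upgrade the paper's iterated-splitting lemma (Lemma~\ref{lem:FindingkDisjSubmatricesOfA}): that lemma uses Marcus--Spielman--Srivastava (Theorem~\ref{thm:MSS}) to split the rows of an approximately regular matrix into blocks that still approximate the identity spectrally. What we need is a strengthened version where each split $[m] = S_1 \dot\cup S_2$ preserves not just $\sum_i A_i A_i^\top$ but the $\ell_1$ envelope $x \mapsto \sum_i |\langle A_i, x\rangle|$ uniformly over $x \in S^{d-1}$, so that after $\log(m/d)$ rounds each piece has $\Theta(d)$ rows and still carries a $\Theta(d/m)$ fraction of the $\ell_1$ mass in every direction. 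A natural first step is to combine MSS with $\ell_1$ Lewis-weight reweighting: after rescaling rows by their Lewis weights, the quadratic form $\sum_i A_i A_i^\top$ and the $\ell_1$ envelope become comparable up to an isotropy constant, so a balanced spectral split might plausibly yield a balanced $\ell_1$ split.

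The main obstacle is that, even after Lewis reweighting, spectral closeness only controls $\sum_i |\langle A_i, x\rangle|$ up to a $\sqrt{\log d}$ factor in the worst direction, which is precisely the gap an MSS-style argument would have to beat; and an $\ell_1$ analogue of real-rootedness of characteristic polynomials is not known, so a fundamentally new algebraic tool seems necessary. An alternative route, avoiding sparsification, is to try to make the iterated partial coloring in Theorem~\ref{thm:VBofKwithK} contractive rather than additive: each application of Cor~\ref{lem:PartialColoringForZonotope} produces an increment of $K$-norm $\lesssim \sqrt{d}$, but the increments live in a shrinking subspace spanned by the still-uncolored vectors, so one might hope for a Dudley-style chaining estimate that collapses the $\log\log\log d$ factor. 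This would require controlling correlations between successive partial colorings in the $K$-norm, which appears to demand a new Gaussian measure inequality that goes beyond Theorem~\ref{thm:SlicesOfNormZonotopes} (or directly settling the Gaussian measure conjecture on $\ell_1$ partial colorings stated at the end of the paper, which implies the same sparsification result by iteration).
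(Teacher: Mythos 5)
You have not proved the statement, and neither does the paper: this ``statement'' is Schechtman's conjecture itself, which the paper explicitly lists as open and only establishes up to a $\log\log\log d$ factor (Theorem~\ref{thm:VBofKwithK}). Your first paragraph is a correct and standard reduction --- if Conjecture~\ref{conj: sparseZonotope} held, i.e.\ every zonotope admits a constant-factor approximation by one with $O(d)$ segments, then Spencer's Theorem applied to the generating cube $B_\infty^{O(d)}$ immediately yields $\vb(K,K)\lesssim\sqrt{d}$. But this reduction is already stated in the paper's introduction and Section~\ref{sec:OpenProblems}; it merely trades one open problem for another. Everything after that in your proposal is a research plan, not an argument, and you yourself name the obstruction that stops it.

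To make the gap concrete: Lemma~\ref{lem:FindingkDisjSubmatricesOfA} and the underlying Theorem~\ref{thm:MSS} control only the quadratic form $x\mapsto\sum_{i\in S_j}\langle A_i,x\rangle^2$, whereas zonotope approximation $Q\subseteq K\subseteq (1+\varepsilon)Q$ is equivalent to uniform control of the support function, i.e.\ the $\ell_1$ envelope $x\mapsto\sum_i|\langle A_i,x\rangle|$. Even after Lewis reweighting, a two-sided spectral guarantee transfers to the $\ell_1$ envelope only up to a $\sqrt{\log d}$ factor in the worst direction, which is exactly why the best known sparsification (Theorem~\ref{thm:sparsify}) needs $O(d\log d)$ segments and why the paper's final bound carries an iterated logarithm. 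Your alternative route fares no better: each application of Corollary~\ref{lem:PartialColoringForZonotope} costs $\Theta(\sqrt{d})$ in $K$-norm regardless of how few vectors remain uncolored (the bound is $\sqrt{d}$, not $\sqrt{n}$, because the Gaussian measure lower bound of Theorem~\ref{thm:SlicesOfNormZonotopes} requires scaling $K$ by a constant independent of $\dim(H)$), so the increments do not telescope, and the paper's actual mechanism for decay is Spencer's theorem on the $m\lesssim d\log d$ generating segments --- which brings you straight back to the sparsification bottleneck. Absent a proof of one of Conjectures~\ref{conj: sparseZonotope}--\ref{conj:SparseL1} or of the Gaussian-measure conjecture on $\ell_1$ partial colorings, your proposal establishes nothing beyond what the paper already records as open.
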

\bibliographystyle{alpha}
\bibliography{vectorBalancingConstantOfZonotopes}

\begin{thebibliography}{AAGM15}

\bibitem[AAGM15]{AsymptoticGeometricAnalysisBook2005}
S.~Artstein-Avidan, A.~Giannopoulos, and V.~Milman.
\newblock {\em Asymptotic Geometric Analysis. Part I}.
\newblock 2015.

\bibitem[Ban98]{Banaszczyk-RSA1998}
W.~Banaszczyk.
\newblock Balancing vectors and {G}aussian measures of $n$-dimensional convex
  bodies.
\newblock {\em Random Struct. Algorithms}, 12(4):351--360, 1998.

\bibitem[Ban10]{ConstructiveDiscMin-Bansal-FOCS2010}
N.~Bansal.
\newblock Constructive algorithms for discrepancy minimization.
\newblock In {\em {FOCS}}, pages 3--10. {IEEE} Computer Society, 2010.

\bibitem[BDGL18]{GramSchmidtWalk-BansalDGL-STOC2018}
N.~Bansal, D.~Dadush, S.~Garg, and S.~Lovett.
\newblock The {G}ram-{S}chmidt walk: a cure for the {B}anaszczyk blues.
\newblock In {\em {STOC}}, pages 587--597. {ACM}, 2018.

\bibitem[Bec81]{Beck-RothsEstimateIsSharp1981}
J.~Beck.
\newblock Roth's estimate of the discrepancy of integer sequences is nearly
  sharp.
\newblock {\em Combinatorica}, 1(4):319--325, 1981.

\bibitem[BF81]{IntegerMakingTheorems-BeckFiala81}
J.~Beck and T.~Fiala.
\newblock ``{I}nteger-making'' theorems.
\newblock {\em Discrete Appl. Math.}, 3(1):1--8, 1981.

\bibitem[BJM22]{BansalJiangMeka-MatrixSpencerUpToPolylog-2022-Arxiv}
N.~Bansal, H.~Jiang, and R.~Meka.
\newblock Resolving {M}atrix {S}pencer conjecture up to poly-logarithmic rank,
  2022.

\bibitem[BLM89]{BLM89}
J.~Bourgain, J.~Lindenstrauss, and V.~Milman.
\newblock {Approximation of zonoids by zonotopes}.
\newblock {\em Acta Mathematica}, 162:73 -- 141, 1989.

\bibitem[BSS09]{TwiceRamanujanSparsifiers-BatsonSpielmanSrivastava-STOC09}
J.~Batson, D.~Spielman, and N.~Srivastava.
\newblock Twice-{R}amanujan sparsifiers.
\newblock In {\em Proceedings of the 41st Annual {ACM} Symposium on Theory of
  Computing, {STOC} 2009, Bethesda, MD, USA, May 31 - June 2, 2009}, pages
  255--262, 2009.

\bibitem[Buk16]{bukh_2016}
B.~Bukh.
\newblock An improvement of the {B}eck-{F}iala theorem.
\newblock {\em Combinatorics, Probability and Computing}, 25(3):380--398, 2016.

\bibitem[Cha00]{DiscrepancyMethod-Chazelle-2000}
B.~Chazelle.
\newblock {\em The Discrepancy Method}.
\newblock Cambridge University Press, 2000.

\bibitem[CP15]{LewisWeights}
M.~B. Cohen and R.~Peng.
\newblock $\ell_p$ row sampling by {L}ewis weights.
\newblock In {\em Proceedings of the Forty-Seventh Annual ACM Symposium on
  Theory of Computing}, STOC '15, pages 183--192, New York, NY, USA, 2015.
  Association for Computing Machinery.

\bibitem[DJR22]{DBLP:conf/stoc/DadushJR22}
D.~Dadush, H.~Jiang, and V.~Reis.
\newblock A new framework for matrix discrepancy: partial coloring bounds via
  mirror descent.
\newblock In {\em {STOC}}, pages 649--658. {ACM}, 2022.

\bibitem[ES18]{DBLP:journals/rsa/EldanS18}
R.~Eldan and M.~Singh.
\newblock Efficient algorithms for discrepancy minimization in convex sets.
\newblock {\em Random Struct. Algorithms}, 53(2):289--307, 2018.

\bibitem[HR17]{DBLP:conf/soda/HobergR17}
R.~Hoberg and T.~Rothvoss.
\newblock A logarithmic additive integrality gap for bin packing.
\newblock In {\em {SODA}}, pages 2616--2625. {SIAM}, 2017.

\bibitem[HRS22]{MatrixDiscViaQuantum-STOC2022}
S.~B. Hopkins, P.~Raghavendra, and A.~Shetty.
\newblock Matrix discrepancy from quantum communication.
\newblock STOC 2022, pages 637--648, New York, NY, USA, 2022. Association for
  Computing Machinery.

\bibitem[LM12]{ConstructiveDiscMin-LovettMeka-FOCS2012}
S.~Lovett and R.~Meka.
\newblock Constructive discrepancy minimization by walking on the edges.
\newblock In {\em {FOCS}}, pages 61--67. {IEEE} Computer Society, 2012.

\bibitem[LRR16]{DBLP:journals/corr/LevyRR16}
A.~Levy, H.~Ramadas, and T.~Rothvoss.
\newblock Deterministic discrepancy minimization via the multiplicative weight
  update method.
\newblock {\em CoRR}, abs/1611.08752, 2016.

\bibitem[LSV86]{LovaszSpencerVesztergombi-1986}
L.~Lov{\'{a}}sz, J.~Spencer, and K.~Vesztergombi.
\newblock Discrepancy of set-systems and matrices.
\newblock {\em Eur. J. Comb.}, 7(2):151--160, 1986.

\bibitem[LT11]{LedouxTalagrandBook2011}
M.~Ledoux and M.~Talagrand.
\newblock {\em Probability in {B}anach spaces}.
\newblock Classics in Mathematics. Springer-Verlag, Berlin, 2011.
\newblock Isoperimetry and processes, Reprint of the 1991 edition.

\bibitem[Mek14]{MekaBlog2014}
R.~Meka.
\newblock Discrepancy and beating the union bound (blog post), 2014.

\bibitem[MSS15]{KadisonSingerProblem-MSS-AnnalsOfMath2015}
A.~W. Marcus, D.~A. Spielman, and N.~Srivastava.
\newblock Interlacing families ii: Mixed characteristic polynomials and the
  {K}adison-{S}inger problem.
\newblock {\em Annals of Mathematics}, 182(1):327--350, 2015.

\bibitem[NTZ13]{DBLP:conf/stoc/NikolovTZ13}
A.~Nikolov, K.~Talwar, and L.~Zhang.
\newblock The geometry of differential privacy: the sparse and approximate
  cases.
\newblock In {\em {STOC}}, pages 351--360. {ACM}, 2013.

\bibitem[Rot14]{ConstructiveDiscrepancy-Rothvoss-FOCS2014}
T.~Rothvo{\ss}.
\newblock Constructive discrepancy minimization for convex sets.
\newblock In {\em 55th {IEEE} Annual Symposium on Foundations of Computer
  Science, {FOCS} 2014, Philadelphia, PA, USA, October 18-21, 2014}, pages
  140--145, 2014.

\bibitem[Roy14]{Royen2014Arxiv}
T.~Royen.
\newblock A simple proof of the gaussian correlation conjecture extended to
  multivariate gamma distributions.
\newblock {\em arXiv: Probability}, 2014.

\bibitem[RR20]{DBLP:conf/soda/ReisR20}
V.~Reis and T.~Rothvoss.
\newblock Linear size sparsifier and the geometry of the operator norm ball.
\newblock In {\em {SODA}}, pages 2337--2348. {SIAM}, 2020.

\bibitem[RR22]{RR22}
V.~Reis and T.~Rothvoss.
\newblock Vector balancing in {L}ebesgue spaces.
\newblock {\em Random Structures and Algorithms}, 08 2022.

\bibitem[Sch87]{Schechtman1987}
G.~Schechtman.
\newblock More on embedding subspaces of $l_p$ in $l^n_r$.
\newblock {\em Compositio Mathematica}, 61(2):159--169, 1987.

\bibitem[Sch07]{AIMWorkshop2007}
G.~Schechtman.
\newblock Fourier analytic methods in convex geometry (workshop at the
  {A}merican {I}nstitute of {M}athematics; http://aimpl.org/fourierconvex/1/),
  2007.

\bibitem[Spe85]{SixStandardDeviationsSuffice-Spencer1985}
J.~Spencer.
\newblock Six standard deviations suffice.
\newblock {\em Trans. Amer. Math. Soc.}, 289(2):679--706, 1985.

\bibitem[SW99]{SZAREK1999193}
S.~J. Szarek and E.~Werner.
\newblock A nonsymmetric correlation inequality for gaussian measure.
\newblock {\em Journal of Multivariate Analysis}, 68(2):193--211, 1999.

\bibitem[Tal90]{EmbeddingL1-Talagrand-PAMS1990}
M.~Talagrand.
\newblock Embedding subspaces of ${L}_1$ into $\ell_1^{{N}}$.
\newblock {\em Proceedings of the American Mathematical Society},
  108(2):363--369, 1990.

\bibitem[Tko15]{tkocz2015high}
T.~Tkocz.
\newblock {\em High-dimensional Phenomena: Dilations, Tensor Products and
  Geometry of $L_1$}.
\newblock University of Warwick, 2015.

\bibitem[Vaa79]{Vaaler1979}
J.~D. Vaaler.
\newblock {A geometric inequality with applications to linear forms.}
\newblock {\em Pacific Journal of Mathematics}, 83(2):543 -- 553, 1979.

\bibitem[Zou12]{Zouzias2012}
A.~Zouzias.
\newblock A matrix hyperbolic cosine algorithm and applications.
\newblock In Artur Czumaj, Kurt Mehlhorn, Andrew Pitts, and Roger Wattenhofer,
  editors, {\em Automata, Languages, and Programming}, pages 846--858, Berlin,
  Heidelberg, 2012. Springer Berlin Heidelberg.

\end{thebibliography}

\appendix

\section{Normalizing zonotopes\label{sec:NormalizingZonotopes}}

In this section, we show that for any full-dimensional zonotope $K \subseteq \setR^d$ there is a linear transformation $T : \setR^d \to \setR^d$ and a normalized zonotope $\tilde{K}$ so that $\frac{4}{5} \tilde{K} \subseteq T(K) \subseteq \tilde{K}$. 
For this result we will need the existence of \emph{Lewis weights} \cite{LewisWeights}:

\begin{theorem} Given a matrix $A \in \setR^{m \times d}$, there exists a unique vector $\overline{w} \in \setR^m_{> 0}$ so that for all $i \in [m]$ one has
\[ \overline{w}_i^{-2} A_i^\top (A^\top \overline{W}^{-1} A)^{-1} A_i = 1,\]
where $\overline{W} := \operatorname{diag}(\overline{w})$. Moreover, $\tr[\overline{W}] \le d$, with equality for full rank $A$.
\end{theorem}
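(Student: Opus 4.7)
The plan is to characterize $\overline{w}$ via a variational principle: as the (appropriately rescaled) minimizer of $F(w) := \log\det M(w)$ subject to a simplex constraint, where $M(w) := \sum_{i=1}^m A_iA_i^\top / w_i$, and then to read off the defining equation from the KKT conditions. Rows with $A_i = 0$ are irrelevant and can be dropped, so I assume $A_i \neq 0$ for every $i$. If $A$ is not full column rank I restrict all determinants and traces to $\operatorname{range}(A^\top)$ and replace $d$ by $r := \rank(A)$ throughout.

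To establish existence of a minimizer of $F$ on $\{w \in \setR^m_{>0} : \sum_i w_i \le d\}$, observe that $\partial F/\partial w_i = -A_i^\top M(w)^{-1} A_i / w_i^2 < 0$, so $F$ is strictly decreasing in every coordinate and the constraint $\sum w_i = d$ must be active at any minimum. As $w_i \to 0$ in any coordinate, the matrix determinant lemma shows that $\det M(w) \to \infty$ and hence $F(w) \to +\infty$, so the minimum is attained at some $w^*$ in the relative interior of the constraint surface. The KKT condition at $w^*$ then gives a single multiplier $\mu > 0$ with
\[
  A_i^\top M(w^*)^{-1} A_i = \mu \,(w_i^*)^2 \qquad \text{for every } i \in [m].
\]
Dividing through by $w_i^*$ and summing, the left-hand side equals $\tr\!\bigl(M(w^*)^{-1}\sum_i A_iA_i^\top/w_i^*\bigr) = \tr(I_r) = r$, and the right-hand side equals $\mu \sum_i w_i^* = \mu d$, so $\mu = r/d$. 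Setting $\overline{w} := (r/d)\,w^*$ and using the homogeneity $M(\alpha w)^{-1} = \alpha\,M(w)^{-1}$, the KKT relation becomes exactly the Lewis equation $\overline{w}_i^{-2} A_i^\top (A^\top \overline{W}^{-1} A)^{-1} A_i = 1$, while $\tr(\overline{W}) = (r/d)\cdot d = r \le d$, with equality iff $A$ has full column rank.

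The main obstacle will be uniqueness. The variational problem above is not a convex program: under the natural reparameterization $s_i := 1/w_i$, the matrix $\sum_i s_i A_iA_i^\top$ is linear in $s$ and $\log\det$ is concave on the PSD cone, so $F$ is concave (not convex) in $s$. Consequently the existence argument produces a critical point of $F$ but does not, by itself, preclude a second one. To close the gap I would invoke the contraction analysis of Cohen and Peng~\cite{LewisWeights}: define the iteration $T(w)_i := \sqrt{A_i^\top M(w)^{-1} A_i}$, whose fixed points (after scaling) are precisely the Lewis weights. Their analysis shows that $T$ is a contraction on the space of positive vectors in a suitable logarithmic metric for $\ell_p$ Lewis weights with $p < 4$; our setting is $p = 1$, so this yields a unique positive fixed point, which must coincide with the variational $\overline{w}$ constructed above.
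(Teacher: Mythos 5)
The paper does not actually prove this statement --- it is imported verbatim from Cohen--Peng \cite{LewisWeights} as a black box --- so there is no internal proof to compare against; I will assess your argument on its own terms. Your existence argument is essentially Lewis's classical determinant-variational characterization and it is sound: $F$ is strictly decreasing coordinatewise, so the constraint is active; the KKT identity $A_i^\top M(w^*)^{-1}A_i = \mu (w_i^*)^2$ is correct; and your trace computation $\sum_i (w_i^*)^{-1}A_i^\top M(w^*)^{-1}A_i = \tr(M^{-1}M) = r$ together with the homogeneity $M(\alpha w) = \alpha^{-1}M(w)$ does produce the Lewis equation and $\tr(\overline{W}) = r \le d$. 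One step is under-justified: ``the matrix determinant lemma shows $\det M(w)\to\infty$ as $w_i \to 0$'' needs the complementary part of $M(w)$ to stay uniformly positive definite, which is not automatic if the rows other than $i$ fail to span. The fix is easy --- on the constraint set $\max_j w_j \le d$, so $M(w) \succeq \tfrac{1}{d}A^\top A \succ 0$ (on the range), and then $M(w) \succeq \tfrac12\bigl(w_i^{-1}A_iA_i^\top + \tfrac1d A^\top A\bigr)$ gives $\det M(w) \gtrsim 1 + w_i^{-1}A_i^\top(A^\top A)^{-1}A_i \to \infty$ --- but as written the claim is a gap. You are also right that the program is a concave minimization and cannot by itself deliver uniqueness; outsourcing that to the Cohen--Peng contraction (factor $|1-2/p|\cdot p/2 = 1/2$ at $p=1$, so Banach applies on positive vectors with the $\|\log w - \log w'\|_\infty$ metric) is legitimate and is exactly the source the paper cites. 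Two small remarks: the fixed points of $T$ are the Lewis weights with no rescaling needed, so once you invoke the contraction you get existence for free as well, and the trace identity then follows in one line from the fixed-point equation itself --- so the variational half of your proof, while correct and instructive, is strictly speaking redundant. Finally, note the statement is false as literally written if $A$ has a zero row, and $(A^\top\overline{W}^{-1}A)^{-1}$ does not exist for rank-deficient $A$; your conventions (drop zero rows, restrict to $\operatorname{range}(A^\top)$ and replace $d$ by $r$) are the correct reading.
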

Now to the proof of Lemma~\ref{lem:normalized}. 
\begin{proof}[Proof of Lemma~\ref{lem:normalized}]
  Consider a full-dimensional zonotope $K = A^{\top}B_{\infty}^m$ with $A \in \setR^{m \times d}$.
  Let $\overline{W}$ be the diagonal matrix corresponding to the Lewis weights of $A$ and let $W := D \overline{W}$ where $D > 0$
  is large enough so that $w_i := W_{i,i} \ge 1$ for all $i$. Define a matrix $B := A (A^\top W^{-1} A)^{-1/2} \in \setR^{m \times d}$ and define a second matrix $\tilde{A}$ where
  each row $B_i$ is replaced by $\lceil w_i \rceil$ many rows so that the first $\lfloor w_i \rfloor$ rows are all copies of $w_i^{-1} B_i$,
  and (if $\{w_i\}\neq 0$) the last row is $\{w_i\}^{1/2} w_i^{-1} B_i$, for a total of $m' := \sum_{i=1}^m \lceil w_i\rceil$ many rows. We will show that the conditions of Lemma~\ref{lem:normalized} hold with
  \begin{equation*}
      T:\mathbb{R}^d\rightarrow \mathbb{R}^d,\ \ T(K)=\sqrt{\tfrac{d}{m'}}(A^TW^{-1}A)^{-1/2}K=\sqrt{\tfrac{d}{m'}}\cdot B^TB_\infty^m
  \end{equation*}
  and $\widetilde{K}:=\sqrt{\tfrac{d}{m'}}\widetilde{A}^TB_\infty^{m'}$.
  
  First we show that $\tilde{K}$ is normalized, or equivalently that $\tilde{A}$ is approximately regular. Note that
\[
(\tilde{A}^\top \tilde{A})_{j,k} =  \sum_{i=1}^{m'} \tilde{A}_{i,j} \tilde{A}_{i,k} = \sum_{i=1}^m w_i^{-2} (\lfloor w_i \rfloor + \{w_i\}) \cdot B_{i,j} B_{i,k} = \sum_{i=1}^m w_i^{-1} \cdot B_{i,j} B_{i,k},
\]
so that by definition of $B$,
\[\tilde{A}^\top \tilde{A} = B^\top W^{-1} B = (A^\top W^{-1} A)^{-1/2} A^\top W^{-1} A (A^\top W^{-1} A)^{-1/2} = I_d.\]

Moreover, by the definition of Lewis weights, for each
row $i' \in [m']$ corresponding to a copy of $B_i$ one has
\[
\|\tilde{A}_{i'}\|_2^2\le w_i^{-2}B_i^\top B_i = w_i^{-2} A_i^\top (A^\top W^{-1} A)^{-1} A_i  = \frac{1}{D} \le \frac{2d}{m'},\]
where the last inequality follows since
\[m' = \sum_{i=1}^{m} \lceil w_i \rceil \le 2 \cdot \sum_{i=1}^m w_i = 2D \sum_{i=1}^m \overline{w}_i \le 2D \cdot d.\]
Thus $\widetilde{A}$ is approximately regular, and $\tilde{K}$ is normalized.

To see that $\frac{4}{5}\tilde{K}\subseteq T(K)\subseteq \tilde{K}$, take an arbitrary 
\begin{equation*}
    y = \tfrac{4}{5}\sqrt{\tfrac{d}{m'}}\sum_{i=1}^m \Big(\sum_{p = 1}^{\lfloor w_i\rfloor} x_{i,p} w^{-1}_i B_i + x_{i,\lceil w_i\rceil} \{w_i\}^{1/2} w^{-1}_i B_i\Big) \in \tfrac{4}{5}\sqrt{\tfrac{d}{m'}}\tilde{A}^\top B^{m'}_\infty=\tfrac{4}{5}\tilde{K},
\end{equation*}
and rewrite it as
\begin{equation*}
    \tfrac{4}{5}\sqrt{\tfrac{d}{m'}}\sum_{i=1}^m \Big(\underbrace{w_i^{-1} \Big(\sum_{i=1}^{\lfloor w_i\rfloor} x_{i,p} + x_{i, \lceil w_i \rceil} \{w_i\}\Big)}_{\in [-1,1]} + \underbrace{x_{i, \lceil w_i\rceil} \tfrac{\{w_i\}^{1/2} - \{w_i\}}{w_i}}_{\in [-\frac{1}{4},\frac{1}{4}]}\Big) B_i \in\sqrt{\tfrac{d}{m'}} B^\top B^m_\infty=T(K).
\end{equation*}
Now taking an arbitrary
$y := \sqrt{\tfrac{d}{m'}}\sum_{i=1}^m x_i B_i \in B^\top B^m_\infty=T(K)$, we may write 
\[y = \sqrt{\tfrac{d}{m'}}\sum_{i=1}^{m} \Big(\sum_{p = 1}^{\lfloor w_i\rfloor} x_i w^{-1}_i B_i + x_i \{w_i\} w^{-1}_i B_i\Big) \in \sqrt{\tfrac{d}{m'}}\tilde{A}^\top B^{m'}_\infty=\tilde{K}, \]
completing the proof of the lemma. Finally, note that this result immediately implies that \[
    \tfrac{4}{5}\vb(\tilde{K},\tilde{K})\leq \vb(K,K)\leq \tfrac{5}{4} \vb(\tilde{K},\tilde{K}). \qedhere
\]
\end{proof}

\section{Gaussian measure\label{sec:GaussianMeasure}}

\begin{proof}[Proof of Lemma~\ref{lem:GaussianMeasureOfStrip}]

We make use of the following tail inequality due to Szarek and Werner~\cite{SZAREK1999193} which holds for $t > -1$:
\[\Pr_{g \sim N(0,1)} [g > t] < \frac{1}{\sqrt{2 \pi}} \frac{4 e^{-t^2/2}}{3t + (t^2 + 8)^{1/2}}. \]
In particular, for $t \ge 1$ the right side is upper bounded by $\frac{1}{\sqrt{2 \pi}} \frac{4 e^{-t^2/2}}{6}$. Thus

\[\Pr_{g \sim N(0,1)} [|g| \le t] \ge 1 - \frac{4}{3 \sqrt{2 \pi}} e^{-t^2/2}. \]

Since the function $z \mapsto e^{-2z/3}$ is convex, we have $1 - \frac{4}{3\sqrt{2\pi}} z \ge e^{-2z/3}$ for all $z \in [0, e^{-1/2}]$ as it holds for the endpoints of the interval. Therefore for $t \ge 1$,

\[\Pr_{g \sim N(0,1)} [|g| \le t] \ge \exp(-\tfrac{2}{3} e^{-t^2/2}).\]

We conclude that for any $a \in \setR^n$ with $\|a\|_2 \leq 1$ and $t \geq 1$ one has
\[\Pr_{y \sim N(0,I_n)}[|\left<a,y\right>| \leq t] = \Pr_{g \sim N(0,1)} \Big[|g| \le \frac{t}{\|a\|_2}\Big] \ge \exp(-\tfrac{2}{3} e^{-t^2/(2\|a\|_2^2)}) \ge \exp(-e^{-t^2/2} \cdot \|a\|_2^2).\]

Indeed, the last inequality follows because

\[
  \frac{2}{3} \exp\Big( \frac{t^2}{2} - \frac{t^2}{2\|a\|_2^2}\Big) \le \frac{2}{3} \exp\Big(\frac{1}{2} - \frac{1}{2\|a\|_2^2}\Big) \le \frac{2}{3} \cdot e^{1/2} \cdot \frac{2}{e} \cdot \|a\|_2^2 \le \|a\|_2^2,\]
where the second to last inequality follows from $e^{z} \ge ez$ for $z := 1/(2\|a\|_2^2)$.
\end{proof}

\begin{proof}[Proof of Lemma~\ref{lem:ComparisonGaussians}]
Draw another random variable $z \sim N(0, B-A)$ and note that by log-concavity we have

\[\Pr_{y \sim N(0,A)} [y \in K] \ge \Pr_{y \sim N(0,A)} \Big[ \Pr_{z \sim N(0,B-A)} [y + z \in K]\Big] = \Pr_{y \sim N(0, B)} [y \in K]. \qedhere \]
\end{proof}
\end{document}